\documentclass[12pt,reqno,hidelinks]{amsart}
\usepackage{amssymb}
\usepackage{amsmath, amssymb, verbatim, url, mathrsfs}
\usepackage{mathrsfs}
\usepackage{dutchcal}
\usepackage{dsfont}
\usepackage{mathtools}
\usepackage{verbatim}
\usepackage{amsthm}
\usepackage{framed}
\usepackage{cite}
\usepackage{wasysym}
\usepackage{upgreek}
\usepackage{color}
\usepackage[dvipsnames]{xcolor}
\usepackage{tensor}
\usepackage{accents}
\usepackage{dsfont}
\usepackage[colorlinks,linkcolor=blue,citecolor=blue]{hyperref}
\usepackage{hyperref}
\usepackage{enumerate} 
\usepackage[normalem]{ulem}
\usepackage{longtable}
\usepackage{mathtools}

\numberwithin{equation}{section}

\newtheorem{proposition}{Proposition}[section]
\newtheorem{lemma}[proposition]{Lemma}
\newtheorem{corollary}[proposition]{Corollary}
\newtheorem{theorem}[proposition]{Theorem}

\newtheorem{definition}[proposition]{Definition}
\newtheorem{problem}[proposition]{Problem}

\theoremstyle{definition}
\newtheorem{remark}[proposition]{Remark}

\newcommand{\vertiiii}[1]{{\left\vert\kern-0.25ex\left\vert\kern-0.25ex\left\vert\kern-0.25ex\left\vert #1 \right\vert\kern-0.25ex\right\vert\kern-0.25ex\right\vert\kern-0.25ex\right\vert}}
\newcommand{\vertiii}[1]{{\left\vert\kern-0.25ex\left\vert\kern-0.25ex\left\vert #1 \right\vert\kern-0.25ex\right\vert\kern-0.25ex\right\vert}}

\newcommand{\rrho}{\rho}

\newcommand{\rw}{\mathring{v}}

\newcommand{\rp}{p}
\newcommand{\rPhi}{\Phi}

\newcommand{\bx}{\mathbf{x}}
\newcommand{\by}{\mathbf{y}}

\newcommand{\Rbb}{\mathbb{R}}
\newcommand{\Zbb}{\mathbb{Z}}

\newcommand{\del}[1]{{\partial_{#1}}}

\newcommand{\AND}{{\quad\text{and}\quad}}

\newcommand{\Li}{L^\infty}
\newcommand{\la}{\langle}
\newcommand{\ra}{\rangle_t}

\newcommand{\starcup}{$\sqcup$\kern-0.58em{$\star$}}
%\blacktriangledown

%\newcommand{\bb}{\bar{\boxminus}}

%\newcommand{\bl}[1]{{\color{blue}{#1}}}

%\newcommand{\br}[1]{{\color{brown}{#1}}}

\newcommand{\p}[1]{
\begin{pmatrix}
  #1
\end{pmatrix}
}

\let\origmaketitle\maketitle
\def\maketitle{
	\begingroup
	\def\uppercasenonmath##1{} % this disables uppercasing title
	\let\MakeUppercase\relax % this disables uppercasing authors
	\origmaketitle
	\endgroup
}

\allowdisplaybreaks

\DeclareMathOperator{\dist}{dist}

\DeclareMathOperator{\supp}{supp}

\setlength{\hoffset}{-20mm}
\setlength{\voffset}{-17mm}

\setlength{\textwidth}{17cm}
\setlength{\textheight}{23.5cm}%

\setlength{\marginparwidth}{25mm}%

\begin{document}

\title[Improved local existence and improved continuation criterion]{Localized continuation criterion, improved local existence and uniqueness for the Euler--Poisson system in a bounded domain}

\address{Center for Mathematical Sciences, Huazhong University of Science and Technology,
1037 Luoyu Road, Wuhan, Hubei Province, China; Beijing International Center for Mathematical Research (BICMR), Peking University, No.5 Yiheyuan Road Haidian District, Beijing, China. }
\email{chao.liu.math@foxmail.com}
\author{Chao Liu}

\begin{abstract}
To understand the formations of singularities of the  Euler--Poisson system with vacuum, we revisit   Makino's star model in this article.
We first remedy,  to some extent, the inconveniences of Makino's star model and remove its imposed nonphysically exterior free-falling velocity field by only specifying the velocity field on a compact support of the density.
Makino has coined the presence of such
an exterior velocity field as a ``Cheshire
cat phenomenon'' and he  \cite{Makino1986} and Rendall \cite{Rendall1992} have both emphasized the difficulty of removing this phenomenon.
Moreover, we obtain an improved local existence and uniqueness theorem for
initial data for the density and velocity that both have compact support.
Finally, we are able to prove a localized strong continuation criterion in which the
breakdown of solutions is only controlled by quantities defined on
the compact support of the solution. In addition, the localized strong continuation criterion also generalizes the continuation criterion from the incompressible Euler equation in a bounded domain \cite{Ferrari1993,Shirota1993} to the compressible Euler--Poisson system.

% \vspace{2mm}

%{{\bf Keywords:} Euler--Poisson systems; tame solutions; Makino variable; Makino solutions; free-falling boundary; blowups; quasilinear symmetric hyperbolic systems; self-gravitating systems}

\vspace{2mm}

{{\bf Mathematics Subject Classification:} Primary 35Q31, 35A01; Secondary 35L02, 85A30}
\end{abstract}

\maketitle

%\tableofcontents

\section{Introduction} \label{S:INTRO}
The existence and uniqueness of solutions for the Euler system and Euler--Poisson system with vacuum are notoriously difficult problems that have been widely studied in the recent three decades.  Studies on systems with physical vacuum boundaries have become increasingly active in recent years. However, motivated by the formation of physical vacuum boundaries,  the initial value problem of the Euler--Poisson system with  vacuum requires further attention (in some sense, it models a giant interstellar gas cloud).
Let us first introduce the initial value problem we are investigating and then further explain its significance and
why we are interested in solving it.
\begin{problem}[The initial value problem considered]\label{prob}\ 
\begin{enumerate}[(i)] 
\item \label{q:1} \underline{Variables:}  We use functions $\rrho:[0,T)\times \Rbb^3\rightarrow \Rbb_{\geq 0}$, $ p:[0,T)\times \Rbb^3\rightarrow \Rbb_{\geq 0}$, and $\rPhi:[0,T)\times\Rbb^3 \rightarrow \Rbb$, for some constant $T>0$, to describe the distribution of the mass density, the pressure of the fluids and the Newtonian potential of the fluid, respectively, and denote $\Omega(t):=\supp\rrho(t,\cdot) =\{\bx\in \Rbb^3\;|\;\rrho(t,\bx)>0\}\subset \Rbb^3$ is the changing volume occupied by the fluid at time $t$. Then the vacuum is identified by $\rrho(t,\bx)=0$ in $\Omega^\mathsf{c}(t)$ and the fluids by $\rrho(t,\bx)>0$ in $\Omega(t)$. We focus on the \textit{isentropic ideal} fluid throughout this article, that is, the \textit{equation of state}  is given by
\begin{align}\label{e:eos1}
 p= K  \rrho^{\gamma}  \quad \text{for} \quad \bx\in \Rbb^3
\end{align}
where $\gamma>1$ and $K\in \Rbb_{>0}$ are both given constants. The velocity of fluids is only defined on $\Omega(t)$ and denoted by  $\mathring{\mathbf{v}}:=(\mathring{v}^i):[0,T)\times \Omega(t)\rightarrow \Rbb^3$. According to these, we emphasize that: $(a)$ the velocity $\mathring{\mathbf{v}}$ is \textbf{not} defined in $\Omega^\mathsf{c}(t)$; $(b)$ the density $\rho =0$ on $\partial\Omega(t)$ where  $\partial \Omega(t)$ is the boundary of the volume $\Omega(t)$ (i.e.,  the moving interface between the fluid and the exterior vacuum).

\item \label{q:2} \underline{Euler--Poisson equations} determine the developments of the fluid in $\Omega(t)$ and the Newtonian potential on $\Rbb^3$, that is,
\begin{align}
\del{0} \rrho + \mathring{v}^i \del{i} \rrho+ \rrho \del{i} \mathring{v}^i = & 0  \quad  &&\text{in}\quad\Omega(t),  \label{e:NEul1} \\
\rrho \del{0} \mathring{v}^k + \rrho \mathring{v}^i \del{i} \mathring{v}^k + \delta^{ik} \del{i}  p = & -   \rrho \del{}^k  \Phi &&\text{in}\quad \Omega(t),  \label{e:NEul2} \\
\Delta  \Phi =  &  \rrho,
 &&\text{in}\quad \Rbb^3,  \label{e:NEul3}
\end{align}
for $t\in [0,T)$. The Newtonian potential $ \Phi$ is given by
\begin{equation}\label{e:Newpot}
 \Phi(t,\bx)=-\frac{1}{4\pi}\int_{\Omega(t)}				 \frac{\rrho(t,\by)}{|\bx-\by|}d^3 \by \quad \text{for} \quad  (t,\bx)\in [0,T) \times \Rbb^3,
\end{equation}

\item \label{q:3} \underline{Initial data} are prescribed by
\begin{align}\label{e:inidata}
\rrho(0,\bx)=\rrho_0(\bx) \quad \text{for} \quad \bx\in \Rbb^3 \AND \rw^k(0,\bx)= \rw^k_0(\bx) \quad \text{for} \quad \bx\in \Omega(0).
\end{align}
We assume $\Omega(0)\subset \Rbb^3$ is a precompact set throughout this article.
\end{enumerate}
\end{problem}

In this article, we consider \textit{regular solutions} to the initial value problem \ref{prob}:
\begin{definition}\label{t:clsl}
	A set of functions $(\rrho,\rw^i,\rPhi,\Omega(t))$ is called \textbf{a regular solution} to the above Problem \ref{prob} on $t\in[0,T)$ for $T>0$ if it solves the system \eqref{e:NEul1}--\eqref{e:NEul3} and satisfies the following conditions:
	\begin{enumerate}
		\item \label{D:1} there is a \textit{$C^1$-extension} $v^i$ of $\rw^i$ to the boundary, i.e., there exists\footnote{$v^i\in C^1([0,T)\times \overline{\Omega(t)},\Rbb^3)$ means $\rw^i$ and $\del{\mu}\rw^i$ have continuous extensions to $[0,T)\times \overline{\Omega(t)}$. }
		$v^i\in C^1([0,T)\times \overline{\Omega(t)},\Rbb^3)$, such that $v^i(t,\bx)=\rw^i(t,\bx)$ for $(t,\bx)\in[0,T)\times \Omega(t)$;
		\item \label{D:2}  $\rrho\in C^1([0,T)\times \Rbb^3,\Rbb)$ and $\rPhi$ is given by \eqref{e:Newpot};
		\item \label{D:3} $c_s^2=K\gamma \rho^{\gamma-1} \in C^1([0,T)\times \Rbb^3,\Rbb)$ where $c_s^2:=d\rp/d\rrho =K\gamma \rrho^{\gamma-1}$ is the sound speed.
	\end{enumerate}
\end{definition}
\begin{remark}
	According to   Problem \ref{prob}, although it is an initial value problem, there is an interface $\partial\Omega(t)$ between the fluid and the vacuum. However, on the other hand, we should emphasize that Problem \ref{prob} is not a free initial boundary problem  despite the existence of the boundary $\partial \Omega(t)$ since there is no prescribed conditions imposed on the boundary $\partial\Omega(t)$. 
\end{remark}
\begin{remark}\label{r:ffb}
	Although Problem \ref{prob} is  a pure initial value  problem, due to the existence of this boundary, it is beneficial to see the property of the boundary $\partial\Omega(t)$.  This property is a consequence of   the regular solutions (see Definition \ref{t:clsl}). 
Note \eqref{D:2} has already implied $c_s^2\in C^0$ and away from the boundary $\partial\Omega(t)$, we have $\nabla c_s^2 \in C^0$, thus the key point of Definition \ref{t:clsl}.\eqref{D:3}  is that $\nabla c_s^2$ is continuous across the boundary. Further, since $\nabla c_s^2 =0$ in $\Rbb^3\setminus \overline{\Omega(t)}$,  the condition  \eqref{D:3} implies the boundary $\partial\Omega(t)$ satisfies
\begin{equation}\label{e:ffbdry0}
 \del{i}(c_s^2(\rho))=0 \quad \text{on} \quad \del{}\Omega(t).
\end{equation}
This condition \eqref{e:ffbdry0} has been explicitly pointed out in \cite{Liu2000} for one-dimensional case (see also \cite[p.554]{Zeng2020} for a three-dimensional generalization of it which is slightly weaker than \eqref{e:ffbdry0}). Now let us briefly explain the condition \eqref{e:ffbdry0} is equivalent to, for the equation of state \eqref{e:eos1},  the \textit{free-fall boundary} given by \cite{Brauer1998} if $\Omega(t)\in C^1$.  Note \eqref{e:NEul2}  can be rewritten as
\begin{equation}\label{e:ffba}
 \del{0} \mathring{v}^k +  \mathring{v}^i \del{i} \mathring{v}^k + \frac{1}{\gamma-1} \delta^{ik} \del{i}  c_s^2 =  -    \del{}^k  \Phi  \quad \text{on} \quad \Omega(t).
\end{equation}
If we have a regular solutions to Problem \ref{prob}, let us extend the $C^1$ vector field $v^k$ (we remind $v^k$ is a $C^1$-extension of $\ring{v}^k$ according to Definition \ref{t:clsl}.$(1)$) to a slightly larger neighborhood of $\overline{\Omega(t)}$, for example, we can use the extension as that in \cite[Eq. $(2.9)$]{Luo2014}. 
Using the extended velocity field and taking the limit on the both sides of the equation \eqref{e:ffba} by approaching to  the boundary $\partial \Omega(t)$,  with the help of $\del{i}c_s^2 = 0$ at the vacuum boundary,  one can verify that
\begin{equation}\label{e:ffeq0}
	\del{0} v^k +  v^i \del{i} v^k   =  -    \del{}^k  \Phi  \quad \text{on} \quad \partial \Omega(t) ,
\end{equation}
which is the free-fall equation characterizing the free-falling of  test particles acted only by the Newtonian gravity. This means that the evolution of the vacuum boundary is independent of the gas
inside the region, and thus it is free-falling due to gravity.  Please note  for any $C^1$-extension, we have the free-fall equation \eqref{e:ffeq0} at the boundary.
\end{remark}

\begin{remark}
Comparing with the \textit{physical vacuum boundary condition} (a soft boundary, see, for example, \cite{Coutand2010,Coutand2012,Jang2009,Liu2000,Luo2014}), $
-\infty<\nabla_{\mathbf{n}}(c^2_s)<0$ on $\partial\Omega(t)$ where  $\mathbf{n}:=(n^i)$ is the outward unit
normal to $\partial \Omega(t)$ (i.e., $\nabla c_s^2$ is not continuous across the boundary), the free-fall boundaries do \textit{not} have \textit{physical vacuum singularities} at the boundary.
\end{remark}

\begin{remark}\label{t:inirmk}
	One of the final motivations, at least for us, of studying the initial value problem of the Euler--Poisson system with compact support  is to find conditions that lead to the physical vacuum singularities, that is, to study formations of  the physical vacuum (modeling the formations of stars from molecular clouds). The condition \eqref{D:3} in Definition \ref{t:clsl} helps detect such singularities. That is why we require the regularity  \eqref{D:3}. However, we have to admit and point out, in this article, in order to simplify the proof, except the uniqueness theorem proven for the regular solutions, we only focus on a type of stronger solutions, Makino type solutions with $C^1$ domain (see Definition \ref{t:clsl2}), for the existence theorem and continuation criteria. The general existence theorem and continuation criteria for regular solutions may leave for future investigations.
\end{remark}

Although the initial value problem \ref{prob} is much simpler than the initial boundary problem for the Euler--Poisson system with the physical vacuum boundaries, there are still some \textit{motivations and purposes} to revisit it:

$(1)$ For some situations of the Euler equation, it is well known that the $\nabla c_s^2$ can not be continuous across the boundary after a finite time (for example, see \cite{Liu1997,Makino1986a,Pan2005}). However, for the Euler--Poisson system,  this result has only been proven for spherically symmetric cases (see \cite{Makino1990} for Makino solution and a remark of generalizations on page $497$ in \cite{Liu2000}). We would like to see if this phenomenon is true for non-spherical symmetric solutions of  Problem \ref{prob}, which is intimately related to Makino's conjecture in \cite[Page $617$-$618$]{Makino1992}. 
We do not intend to achieve this goal in this article, but we have made some preparations for it.
We derive an improved continuation criterion that only relies on quantities on $\Omega(t)$ instead of $\Rbb^3$. It is an improvement of the continuation criterion in \cite{Brauer1998}, and to some extent, it is the desired estimate to localize the singularities and answer Brauer's conjecture in \cite[\S VII.$(4)$]{Brauer1998}. If we know the solution has to blow up at a finite time, this improved continuation criterion will help us rule out impossible types of singularities.

$(2)$ To the best of our knowledge, almost all works on the initial value problem of the Euler--Poisson system with vacuum have been based on Makino's ideas of symmetrizing the hyperbolic system. A notable exception is \cite{Liu1997}, which considered a system
with damping and proved the local existence in one dimension. 
The symmetrized form must be solved in the entire space $\Rbb^3$, as it is an initial value problem without boundary constraints, and the Poisson equation must be solved on $\Rbb^3$. Energy estimates, obtained through integration by parts, play a key role in solving this system, and boundary terms will be eliminated if certain regularity conditions are met.  However, Makino's symmetrized form is not equivalent to the Euler--Poisson system when the density vanishes. There are additional velocity field equations in $\Omega^\mathsf{c}(t)$ (or it can be viewed as additional regularities of the velocity in $\Omega^\mathsf{c}(t)$, see \cite[Page $617$]{Makino1992} and \cite[Definition $(ii)$ in Page $166$]{Makino1990}, or see later \eqref{e:otvel}) have been added to the original Euler--Poisson system, i.e., as Makino wrote: ``in spite of the absence of the
matter the velocity is forced to vary in obedience to the gravitational field''.  It is also known as the Cheshire cat phenomenon and needs   improvements (see \cite[\S$6$]{Makino1986}). In this article, we try to remove the  regularities and existence of the velocity in $\Rbb^3 \setminus \overline{\Omega(t)}$ but only keep the regularities at the boundary $\partial\Omega(t)$ (see Definition and Remark \ref{r:ffb}). In this setting, namely only giving the velocity on   $\Omega(t)$ and requiring the regularities at the boundary $\partial \Omega(t)$, and we try to prove a local existence and uniqueness theorem for the above Problem \ref{prob}. However, in this article, we can not provide a complete proof for the local existence of \textit{regular solutions} to this initial value problem (recall Remark \ref{t:inirmk}), but only for a subclass of the regular solution, i.e., Makino type solution with a $C^1$ domain (see Definition \ref{t:clsl2}).  Nevertheless, the uniqueness theorem in this article works for  regular solutions. Due to the defects of this article, it leaves the general local existence theorem for the regular solution for future explorations.

In summary, based on the above motivations, this article  focuses on the following \textit{aims}: $(1)$ Improve the \textit{local existence} and gain the \textit{uniqueness} theorem of Problem \ref{prob} by only imposing the initial velocity on the compact support $\Omega(0)$ of the density, which improves Makino's existence theorem in \cite{Makino1986} and Brauer's uniqueness theorem in \cite{Brauer1998}; $(2)$ Obtain better \textit{continuation criteria} by controlling  quantities  that again are only defined on $\Omega(t)$. We emphasize although,  in the strong continuation criterion (Theorem \ref{t:conpri2}), we can reduce the possible types of singularities to the blowups of \textit{some components} of the full gradient of the velocity in the interior of $\Omega(t)$, but  as a \textit{caveat}, in a small near-boundary region $\Omega_\epsilon(t)$ we can not do so, and the possible blowups are from the full gradient of the velocity.
These continuation criteria improve Brauer's in \cite{Brauer1998} and achieve, to some extent, Brauer's expectations on localizing the possible singularities.

To our knowledge, the study of the initial value problem for the Euler--Poisson  with vacuum  was started by Makino \cite{Makino1986,Makino1987}, who proved a local existence and uniqueness theorem using the theory of
symmetric hyperbolic systems and a new regularisation procedure for the density.  Later, a series of works \cite{Makino1990,Makino1986a,Perthame1990,Makino1992,Liu1997} continued to study the evolution of Makino's solutions to the Euler equation with or without gravity. Note that the blowup results of the Euler--Poisson system with vacuum \cite{Makino1990,Perthame1990,Makino1992} of Makino solutions are under spherical symmetry, but there are no symmetric assumptions for Euler equations in \cite{Makino1986a}.  The advantage of this method is that it is easy to prove blowups, but it usually does not give more information about the detailed behavior of the finite-time blowup solutions.  We point out that \cite{Makino1992} specified \textit{Makino's conjecture} that any tame
solution, including nonsymmetric tame solution, will become not tame after a finite time.  Brauer and Karp \cite{Brauer2018,Brauer2020} first proved the local wellposedness (in the Hadamard sense) for the
Euler–Poisson(–Makino) system including densities that fall off at infinity. By generalizing the method in \cite{Beale1984,Chemin1990}, Brauer \cite{Brauer1998}  gives a strong continuation criterion for Makino solutions, especially a more detailed classification of the type of blowups.

\subsection{Remarks on the basic ideas}
On the one hand, the original Makino's problem  in \cite{Makino1986} includes the Euler--Poisson equation and data on $\Rbb^3$ and   Makino solutions which regularize the density and require an extra equation of the free falling (see Definition \ref{e:tmsl}).  On the other hand, compared with the original Makino's problem, Problem \ref{prob} only involves the equations and data on $\Omega(t)$ and regular solutions on $\Omega(t)$. The regular solutions in Definition \ref{t:clsl}, in fact, also put constraints on the near-boundary behaviors of the solutions. In other words, they lead to the regularities at the boundary $\partial\Omega(t)$ (see Remark \ref{r:ffb}).  If considering the uniqueness theorem (see Theorem \ref{t:unithm1}), then based on an observation that the regular solution is unique once the data in $\Omega(t)$ is chosen, we notice that the exterior component of  Makino solutions in $\Rbb^3\setminus \overline{\Omega}$ can not affect the interior part of the solutions in $\Omega(t)$. This fact inspires us that the Makino solutions can be used to generate a localized solution on $\Omega(t)$. Since once the data on $\Omega(t)$ is chosen, all the Makino solutions lead to one determined interior solution, which leads us to the Makino type solution with a $C^1$ domain (see Definition \ref{t:clsl2}). It is clear whatever the exterior parts of Makino solutions on $\Rbb^3\setminus \overline{\Omega}$ are, once the data on $\Omega(t)$ is given, the interior solution on $\Omega(t)$  (i.e., the Makino type solution with a $C^1$ domain) is independent of the exterior Makino solutions, but it uniquely solves Problem \ref{prob} without any other extra requirements. Thus, the Makino type solution with a $C^1$ domain yields a regular solution to Problem \ref{prob} without any further exterior regularities on $\Rbb^3\setminus \overline{\Omega}$ but only the regularities at the boundary $\partial \Omega(t)$.

In summary, Problem \ref{prob} is given independent of the exterior information of the velocity $v^i$ on $\Rbb^3\setminus\overline{\Omega(t)}$. The \textit{aim} is to find a \textit{regular solution} defined by Definition \ref{t:clsl} which is independent of the exterior information of the velocity $v^i$ on $\Rbb^3\setminus\overline{\Omega(t)}$ as well. The idea for the local existence theorem for Problem \ref{prob} is based on the uniqueness theorem which implies the exterior information of the velocity $v^i$ on $\Rbb^3\setminus\overline{\Omega(t)}$ can not affect the regular solution on $\Omega(t)$. Thus, if possible,  we can   extend the data \textit{arbitrarily} to Makino data, then use Makino's method to obtain a local Makino solution. Since the interior component of the Makino solution is independent of the exterior part, and the interior part of Makino solution does solve Problem \ref{prob}, we arrive at a Makino type solution with a $C^1$ domain. Just because we attempt to use Makino's method, it reduces the regular solutions to only a subclass, i.e.,  the Makino type solutions with a $C^1$ domain. Therefore, the Makino’s method only provides a way to construct a proper subclass of the regular solution to Problem \ref{prob} The exterior component of the Makino solution does not affect the regular solution.  However, we still need boundary regularity to limit the class of the solutions. Due to this, Problem \ref{prob} and its regular solution have nothing to do with the exterior regularities and the existence of the velocity fields on $\Rbb^3\setminus\overline{\Omega(t)}$. In this point of view, we overcome the Cheshire's cat to some extent.

Please note there is a difference on the regularity between Makino and regular solutions.  The regular solution requires $\rho^{ \gamma-1 }\in C^1$ (recall Definition \ref{t:clsl}), while the Makino requires a stronger regularity $\rho^{\frac{\gamma-1}{2}}\in C^1$ than $\rho^{ \gamma-1 }\in C^1$ (see Definition \ref{e:tmsl} in \S\ref{s:lextm}). 
Although our existence theorem can only include a \textit{subclass} of the \textit{regular solutions} constructed by Makino  solutions,  our uniqueness theorem can imply that all possible \textit{regular solution} is unique up to the same data. The uniqueness is more general than that in \cite{Brauer1998} since the proof is independent of the symmetric hyperbolic system. A relative entropy-entropy flux method given in \cite{Luo2014} is applied. 
A more general local existence for the complete set of regular solutions (with regularity $\rho^{ \gamma-1 }\in C^1$) is desired for the formation problem of the physical vacuum singularities. Methods beyond Makino's symmetrization must be invented, and this problem is open for future studies.

%For simplicity and to focus on how to study the solution on a compact support instead of the whole $\Rbb^3$ and how to remove the velocity field equation on $\Omega^\mathsf{c}$ in Makino's meaning, we will use the ``Makino solution'' (a special regular solution with the stronger regularity $\rho^{\frac{\gamma-1}{2}}\in C^1$ than $\rho^{ \gamma-1 }\in C^1$, see Definition \ref{e:tmsl} in \S\ref{s:lextm}) of the initial value Problem \ref{prob} to construct a special regular solution of this initial value Problem \ref{prob}. This improves Makino's existence theorem (see \cite{Makino1986}) only on the compact support $\Omega(t)$ of the density. Nevertheless, our uniqueness theorem implies the regular solution is unique, i.e., even if we have used a Makino solution to construct a regular solution with the stronger regularity $\rho^{\frac{\gamma-1}{2}}\in C^1$, then there is no other regular solution with the regularity $\rho^{ \gamma-1 }\in C^1$ existing on $[0,T)\times\Omega(t)$. This also generalizes the uniqueness theorem in \cite{Brauer1998}.

%$(3)$ The problem that the free-fall boundary becomes the physical boundary at a finite time may be closely related to the formations of protostars in astrophysics. Hence, it is worth revisiting this problem.

\subsection{Notation and convention}

\subsubsection{Vectors and components} \label{s:vecnota}
We will use unless otherwise stated, boldface, e.g., $\bx$, $\by$, $\mathbf{v}$ for Latins and the Greeks, e.g., $\xi$, to denote vectors and normal font with indices, e.g., $x^i$, $y^i$, $\xi^i$ and $v^i$, to denote the components of vectors (we also use these components to express the vector if it clear from the context).

\subsubsection{Indices and summation convention}\label{iandc}
Throughout this article, unless stated otherwise, we adopt the notation system in general relativity (i.e. \textit{Einstein notation}, see \cite{Wald2010} for more). In specific, we use upper indices to represent components of vectors,
lower indices to represent components of covectors, and use lower case Latin letters, e.g. $i, j,k$, for spatial indices that run from $1$ to $3$, and lower case Greek letters, e.g. $\alpha, \beta, \gamma$, for spacetime indices
that run from $0$ to $3$ (we also denote time coordinate $t$ by $x^0:=t$). We use the \textit{Einstein summation convention} as well. That is, when an index variable (i.e. dummy index) appears twice, once in an upper superscript and once in a lower subscript, in a single term and is not otherwise defined, it implies the summation of that term over all the values of the index. For example, for a summation $z=\sum^3_{i=1} x^i y_i$, we simply use Einstein notation to denote $z= x^i y_i$. In addition, we raise and lower indices by the Euclidean metric $\delta^{ij}$ and $\delta_{ij}$, that is, for example,
\begin{equation*}
	V^{ij}:=V_{lk}\delta^{li}\delta^{kj}, \quad V_{ij}:=V^{lk}\delta_{li}\delta_{kj} \AND V^{ij}:=V^i_k\delta^{kj}.
\end{equation*}

\subsubsection{Lagrangian descriptions}\label{s:lag}
Suppose a field $f:[0,T)\times \Omega(t) \rightarrow V$ (or a property $f$ in Eulerian description) and the flow $\chi:[0,T)\times \Omega(0)\rightarrow \Omega(t)\subset \Rbb^3$ generated by a vector field  $\mathbf{v}:=(v^i)$, such that  $\chi(t,\xi)=\bx \in \Omega(t)$ for every  $(t,\xi)\in[0,T)\times \Omega(0)$ where $T>0$ is a constant, $\Omega(t) \subset \Rbb^3$ is a domain depending on $t$ and $V \subset \Rbb^n$ for some $n \in \Zbb_{\geq 1}$, we denote
\begin{equation}\label{e:undf}
	\underline{f}(t,\xi):=f(t,\chi(t,\xi))
\end{equation}
describing the property $\underline{f}$ of the parcel labeled by the initial position $\xi$ at time $t$ (the property $\underline{f}$ along the flow, i.e., in Lagrangian description). Using this notation, we have
\begin{equation*}
	\del{t}\underline{f}(t,\xi)=\underline{D_t f  }(t,\xi)
\end{equation*}
where $D_t$ is the \textit{material derivative} (i.e. $D_t:=\del{t}+v^i\del{i}$, see, for instance, \cite{Chorin1993}).
According to the definition of $L^\infty$, if $\chi(t,\Omega(0))=\Omega(t)$, we conclude that
\begin{equation*}
	\|f(t)\|_{\Li(\Omega(t))}=\|\underline{f}(t)\|_{\Li(\Omega(0))}.
\end{equation*}

\subsubsection{Sets}\label{s:sets}
We denote $U$ is \textit{strictly contained} in $V$ by $U\subset\subset V$, which represents the closure of $U$ is a compact subset of $V$. A set $U$ is called \textit{precompact} if its closure $\overline{U}$ is compact.
We denote the following subset $\Omega_\epsilon $ of $\Omega $ for some constant  $\epsilon>0$ by
\begin{align}\label{e:omgep}
\Omega_\epsilon:= \{\bx \in \Omega\;|\; \dist(\bx,\del{}\Omega)<\epsilon\},
\end{align}
where  $\dist(\bx,\del{}\Omega):=\inf_{\by\in\del{}\Omega}|\bx-\by|$ and denote
\begin{equation*}
	\mathring{\Omega}_\epsilon :=\Omega \setminus \Omega_\epsilon .
\end{equation*}
We also use $B(\bx_0, r)$ to denote a ball centered at $\bx_0$ with the radius $r$. Therefore, by \eqref{e:omgep}, we denote
\begin{equation*}%  \label{e:omgep2}
	B_\epsilon(\bx_0,r):=\{\bx \in B(\bx_0, r)\;|\; \dist(\bx,\del{}B(\bx_0, r))<\epsilon\},
\end{equation*}
The complement set of a set $\Omega$ is denoted by $\Omega^\mathsf{c}:= \Rbb^3 \setminus \Omega $.

\subsection{Preliminary  concepts}\label{s:defs}
To state the main theorem concisely, we first introduce a few definitions.
\subsubsection{Decomposition of velocities}
For simplicity of notations, we denote
\begin{equation*}\label{e:defW}
	V^i_j(t,\bx):=v^i_{,j}(t,\bx):=\begin{cases}
		\del{j}\rw^i(t,\bx) \quad & \text{if}\quad (t,\bx)\in[0,T)\times \Omega(t)\\
		\lim_{(t^\prime,\bx^\prime)\rightarrow(t,\bx)}\del{j}\rw^i(t^\prime,\bx^\prime)\quad & \text{if}\quad (t,\bx)\in[0,T)\times \del{}\Omega(t)
	\end{cases}.
\end{equation*}
Let us lower the index of $V^i_j$ by $\delta_{ki}$, i.e., $V_{jk}(t,\bx):=  \delta_{ki}V^i_j(t,\bx)$ and decompose
\begin{equation*}%\label{e:WThOm0}
	V_{jk}(t,\bx)
	=   \frac{1}{2}\bigl(V_{jk}(t,\bx)+V_{kj}(t,\bx)\bigr)+\frac{1}{2}\bigl(V_{jk}(t,\bx)-V_{kj}(t,\bx)\bigr)
	=   \Theta_{jk}(t,\bx)-\Omega_{jk}(t,\bx)
\end{equation*}
where $\Theta_{jk}$ is the symmetric deformation component ($\Theta_{jk}=\Theta_{kj}$) of $V_{jk}$ and $\Omega_{jk}$ the antisymmetric rotation component ($\Omega_{jk}=-\Omega_{kj}$) defined by
\begin{align}
	\Theta_{jk}(t,\bx):=& \frac{1}{2}\bigl(V_{jk}(t,\bx)+V_{kj}(t,\bx)\bigr) \label{e:WThOm0a}\\
	\Omega_{jk}(t,\bx):=& \frac{1}{2}\bigl(V_{kj}(t,\bx)-V_{jk}(t,\bx)\bigr). \label{e:WThOm0b}
\end{align}
Then we have the identities,
\begin{equation*}%\label{e:WThOm1}
	V_{kj}=\Theta_{jk}+\Omega_{jk} \AND V_{jk}=\Theta_{jk}-\Omega_{jk}.
\end{equation*}
We also denote the divergence of the velocity
\begin{equation}\label{e:theta}
	\Theta(t,\bx):=\delta^{jk}\Theta_{jk}(t,\bx)=\delta^{jk}V_{jk}(t,\bx)= v^j_{,j}(t,\bx).
\end{equation}

To state the continuation criteria for localized singularities, we introduce a solution with a stronger regularity than the regular solution.
\begin{definition}\label{t:clsl2}
	A set of functions $(\rrho,\rw^i,\rPhi,\Omega(t))$ is called \textbf{a Makino type solution with a $C^k$ domain} ($k\in\Zbb_{\geq 1}$) to the above Problem \ref{prob} on $t\in[0,T)$ for $T>0$ if it is a regular solution (see Definition \ref{t:clsl}) to  Problem \ref{prob} and satisfies the following additional conditions:
	\begin{equation*}
		\rho^{\frac{\gamma-1}{2}} \in C^1([0,T)\times \Rbb^3,\Rbb) \AND \partial\Omega(t)\in C^k \;(k\in\Zbb_{\geq 1}).
	\end{equation*}
\end{definition}

\subsection{Main Theorem}
After the above concepts, we are in a position to present the main theorem. This theorem states the improved local existence, uniqueness, and continuation criterion.
\begin{theorem}[Main Theorem]%\emph{(Main Theorem)}
	\label{t:mainthm}
	Suppose the initial data $(\rrho_0, \rw^i_0)$ of  Problem \ref{prob} is given by  \eqref{e:inidata}, $\Omega(0)$ is a bounded, precompact and simply connected $C^1$-domain, $\rrho_0\in C^1( \Rbb^3,\Rbb_{\geq 0}) $, $\rw^i_0 \in C^1(\Omega(0),\Rbb^3) $. If
	\begin{equation*}
		(a) \quad 1<\gamma\leq \frac{5}{3}, \quad (\rrho_0)^{\frac{\gamma-1}{2 }} \in H^3(\Rbb^3) \AND \rw^i_0\in H^3(\Omega(0),\Rbb^3),
	\end{equation*}
	or if
	\begin{align*}
		(b)\quad 1<\gamma<2, \quad  \rrho_0 \in H^3(\Rbb^3), \quad (\rrho_0)^{\frac{\gamma-1}{2 }}
		\in H^4(\Rbb^3) \AND
		\rw^i_0\in H^4(\Omega(0),\Rbb^3),
	\end{align*}
	then:
	
$(1)$ \emph{(The local existence and uniqueness)}
there is a constant $T>0$, such that  Problem \ref{prob}  has a unique regular solution $(\rrho,\rw^i,\rPhi, \Omega(t)) $ defined by Definition \ref{t:clsl} and this unique regular solution is a Makino type solution with a $C^1$ domain. Moreover,  $\rrho^{\frac{\gamma-1}{2 }}
	\in C^{s-2}([0,T)\times\Rbb^3)$ and $ \Phi \in  C^2([0,T)\times\mathbb{R}^3)$, and the solution $\rw^i$ satisfies
	\begin{equation*}
		\rw^i\in C^0([0,T),H^{s}(\Omega(t)))\cap C^1([0,T),H^{s-1}(\Omega(t)))
	\end{equation*}
	where $s=3,4$ according $(a)$ and $(b)$ respectively;
	
$(2)$ \emph{(The strong continuation criterion)}
	Suppose $1<\gamma \leq 5/3$ and let  $(\rrho,\rw^i,\rPhi,\Omega(t))$ be a Makino type solution with a $C^1$ domain to  Problem \ref{prob} with the property $(w,\mathring{v}^i)\in C^0([0,T^\star),H^{s}(\Omega(t)))\cap C^1([0,T^\star),H^{s-1}(\Omega(t)))$, and $[0,T^\star)$ is the maximal  interval of the existence of  this solution. Then either
\begin{enumerate}
	\item $T^\star=\infty$;
	\item or $T^\star<\infty$ and  there is a small constant $\epsilon>0$ such that
	\begin{align*}
		&\int^{T^\star}_0 \Bigl(\|V_{jk}(s)\|_{L^\infty( \Omega_\epsilon(s))}+\|\Theta(s)\|_{L^\infty( \mathring{\Omega}_\epsilon(s) )}\notag  \\
		&\hspace{1.5cm} +\|\Omega_{jk}(s)\|_{ L^\infty( \mathring{\Omega}_\epsilon(s) )}+\|\nabla  w(s)\|_{ L^\infty(\Omega(s))} \Bigr) ds=\infty.
	\end{align*}
	where $\mathring{\Omega}_\epsilon(s):=\Omega(s)\setminus \Omega_\epsilon(s)$.
\end{enumerate}
\end{theorem}

\subsection{Overview and outline}
\S\ref{s:U&LEclsol} aims to prove the main Theorem \ref{t:mainthm}.$(1)$, that is, the local existence and  uniqueness theorem of the regular solution to  Problem \ref{prob}. The idea of the proof is to connect the regular solution of  Problem \ref{prob} with the Makino solution. We introduce the local existence of the Makino solution in  \S\ref{s:lextm}. Then prove the local existence of the solution to  Problem \ref{prob} by Calder\'{o}n's extensions of some data of regular solutions and by transforming the Makino solution to the solution with the compact support. The uniqueness of the regular solution is given in \S \ref{s:unqthm} based on Lemma \ref{t:unqr3} (see Appendix \ref{a:Lpf}, a lemma proven by using the relative entropy-entropy flux method given in \cite[\S $2$, Step $2$]{Luo2014}) and the Stein extension theorem. One ingredient of proving the uniqueness theorem is the preservation of the regularity of the initial $C^1$ boundary, which will be presented in \S\ref{s:C1bdy}.

In \S\ref{s:contpri}, we introduce the improved continuation criteria of  Problem \ref{prob}. These continuation criteria allow us to continue the local solutions to a larger time interval provided the suitably bounded spatial derivatives. The continuation criteria  mainly help us determine the information of singularities if the solution breaks down at a finite time, and the singularities are all localized in the compact support of the density. First, we give a weak version (see \S\ref{s:ecp}) and it will help us extend the regular solutions to several types of singularities in the compact support of the density. The idea of the proof is to use Makino's formulation of the Euler--Poisson system and energy estimates. However, the standard Gagliardo–Nirenberg–Moser estimates have been replaced by the revised Gagliardo–Nirenberg–Moser estimates on the Lipschitz bounded
domain. Then the strong continuation criterion (see \S\ref{s:scp}) helps us reduce the types of interior singularities.  The key tool of the proof for the strong continuation criterion is the Ferrari--Shirota--Yanagisawa inequality (a bounded domain  version of the Beale--Kato--Majda estimate). This inequality allows us to control the velocity $\|v^i\|_{W^{1,\infty}}$ ``almost only'' by expansion and rotation. In order to use Ferrari--Shirota--Yanagisawa (FSY) inequality, there are two difficulties: the first one comes from the boundary regularity and the second one is since the velocity at the boundary may not be orthogonal to the outer normal of the boundary, that is, the \textit{second difficulty} is the orthogonal condition of the boundary limiting velocity $v^i\delta_{ij} n^j=0$ is not always true in this theorem.
Our idea to overcome these two difficulties and use the FSY inequality is: we first select a smooth surface (the $C^\infty$-approximation lemma of boundary \ref{t:Cinfapx} ensures this) near the boundary $\del{}\Omega(t)$, then decompose the velocity field into two parts around this $C^\infty$ surface, one of these parts is ``tangent'' to the $C^\infty$ surface on it and the other orthogonal to it, then analyze these two parts respectively.

In the end, let us emphasize some key aspects of this article in case causing some confusions. 
\begin{enumerate}
	\item We define three types of solutions throughout this paper: regular solutions (Definition \ref{t:clsl}), the Makino type solutions with a $C^k$ domain (Definition \ref{t:clsl2}) and Makino solution (Definition \ref{e:tmsl}). We emphasize the Makino type solutions with a $C^k$ domain can be viewed as a restriction of the Makino solution to the $C^k$ domain $\Omega(t)$. The regular solution has weaker regularity $\rho^{\gamma-1}\in C^1$ than the Makino type solutions with a $C^k$ domain. We introduce these three types of solutions because, in this article, we prove the existence theorem and continuation criteria for the Makino type solutions with   a $C^k$ domain but the uniqueness theorem for a larger class, the regular solutions. In order to easily prove the local existence theorem, we have to use Makino solutions to extend the Makino type solutions with a $C^k$ domain to the $\Rbb^3$. That is why we introduce Makino solutions as well.  
\item We emphasize although we extend Makino type solutions with a $C^k$ domain to Makino solutions with freely indicated Makino's data. However, the Makino type solutions with a $C^k$ domain do not rely on the Makino solutions on $\Rbb^3\setminus \overline{\Omega(t)}$ if they have the same components of data in $\Omega(t)$. In other words, the Makino type solutions with a $C^k$ domain can be extended to all kinds of Makino solutions if they have the same components of data on $\Omega(t)$, but the \textit{key point} is all the Makino extensions on $\Rbb^3\setminus \overline{\Omega(t)}$ do not affect the interior component of solutions on $\overline{\Omega(t)}$ which is ensured by the uniqueness theorem \ref{t:unithm1}.  Thus, although a Makino type solution with a $C^1$ domain can be extended to many different Makino solutions, the interior component of Makino solution (i.e., the Makino type solution with a $C^1$ domain) is uniquely determined independent of the outside component in $\Rbb^3 \setminus \overline{\Omega(t)}$ of the Makino solutions. This procedure can erase the regularities of the Makino solutions on $\Rbb^3 \setminus \overline{\Omega(t)}$, but of course, can not erase the regularities of Makino solutions on the boundary $  \partial\Omega(t)$. 
\end{enumerate}

\section{Local existence and uniqueness of solutions to  Problem \ref{prob}}\label{s:U&LEclsol}
This section contributes to the local existence and uniqueness theorems of the regular solution to  Problem \ref{prob}. However, we will not prove the general existence theorem of regular solutions.  We will construct the regular solution of  Problem \ref{prob} by the famous Makino solution (with a slightly stronger regularity $\rho^{\frac{\gamma-1}{2}}\in C^1$). We first recall the local existence of the Makino solution (see \cite{Makino1986}) in  \S\ref{s:lextm}, then, by using Makino solutions, construct the local solutions to  Problem \ref{prob} by Calder\'{o}n's extensions of initial data  and transforming the solution of  Problem \ref{prob} to the Makino solution. The uniqueness of the regular solution is given in \S \ref{s:unqthm} based on Lemma \ref{t:unqr3} (see Appendix \ref{a:Lpf}, a Lemma proven by using the relative entropy-entropy flux method given in \cite[\S $2$, Step $2$]{Luo2014}) and Stein extension theorem. One ingredient of proving the uniqueness theorem is the preservation of the regularity of the initial $C^1$ boundary, which will be presented in \S\ref{s:C1bdy}.

\subsection{Local existence of Makino solutions}  \label{s:lextm}
Before discussing the regular solutions of  Problem \ref{prob},  we first recall the relevant Makino solutions and their local existences given by Makino  \cite{Makino1992,Makino1990,Makino1987}.
The Cauchy problem of Euler--Poisson equations with variables defined in $[0,T)\times\Rbb^3$ for some constant $T>0$ is given by
\begin{align}
\del{0} \rrho + v^i \del{i} \rrho+ \rrho \del{i} v^i = & 0  \quad  &&\text{in}\quad  [0,T)\times\Rbb^3,  \label{e:MNEul1} \\
\rrho \del{0} v^k + \rrho v^i \del{i} v^k + \delta^{ik} \del{i} p = & -  \rrho \del{}^k  \Phi &&\text{in}\quad  [0,T)\times\Rbb^3,  \label{e:MNEul2} \\
\Delta  \Phi =   &  \rrho
 &&\text{in}\quad  [0,T)\times\Rbb^3.  \label{e:MNEul3}
\end{align}
where $\rrho$ and $p$ satisfy \eqref{e:eos1} and the velocity $\mathbf{v}:=(v^k):[0,T)\times \Rbb^3\rightarrow \Rbb^3$.
The \textit{initial data} are prescribed by
\begin{align}\label{e:inidata2}
(\rrho,v^k)=(\rrho_0, v^k_0), \quad \text{on} \quad  \{0\}\times \Rbb^3,
\end{align}
and $\Omega(0)$ defined in \S\ref{S:INTRO}.$(i)$ is precompact. Let us define the Makino solution (see \cite{Makino1990,Makino1992}).
\begin{definition}\label{e:tmsl}
	A set of functions $(\rrho,v^i,\rPhi)$ is called \textbf{a Makino solution} of the Cauchy problem \eqref{e:MNEul1}--\eqref{e:inidata2} on $t\in[0,T)$ for $T>0$,
    if it solves the system \eqref{e:MNEul1}--\eqref{e:inidata2} and satisfies the following conditions:
	\begin{enumerate}
		\item $(\rrho,v^i)\in C^1([0,T)\times \Rbb^3, \Rbb^4)$ and $\rPhi$ is given by \eqref{e:Newpot};
		\item $\rrho^{\frac{\gamma-1}{2}}\in C^1([0,T)\times \Rbb^3)$ and $v^i$ satisfies an extra equation of the free falling,
		\begin{equation}\label{e:otvel}
		D_t v^i(t,\bx) =\del{t} v^i(t,\bx) +v^k(t,\bx) \del{k}v^i(t,\bx) =- \del{}^i \Phi(t,\bx) ,
		\end{equation}
		for $(t,\bx) \in [0,T)\times \Omega^\mathsf{c}(t) $.
	\end{enumerate}
\end{definition}
We refer to, throughout this article, the problem of finding the Makino solutions of Euler--Poisson equation \eqref{e:MNEul1}--\eqref{e:inidata2} as solving the \textit{Makino problem}.

\begin{remark}\label{r:tstptl}
The equation of the free-falling \eqref{e:otvel} can be interpreted as an equation of motion of a test particle. A test particle  is a useful idealized model of an object whose mass is assumed to be negligible, that is, the mass is considered  insufficient to alter the behavior of the rest of the system. \eqref{e:otvel} expounds that a test particle moves along the integral curves of its velocity field $v^i(t, \bx(t))$ and the acceleration of this test particle is caused only by Newtonian gravity.
\end{remark}

A standard method to obtain the local existence of the Euler--Poisson system is to transform this system into a symmetric hyperbolic system, then the theory of symmetric hyperbolic systems can be applied to derive the local existence. However, there is a severe difficulty when the vacuum appears. The vanishing of the density leads to the degenerated or unbounded coefficients of the symmetric system (detailed explanations of this difficulty and ideas on how to overcome this difficulty can be found, for example, in \cite{Makino1986,Brauer2018,Liu2018a}).
Let us briefly recall Makino's ideas \cite{Makino1986} in this section. In order to avoid the above difficulty, we introduce Makino's density
\begin{align}\label{e:rhal}
\rrho= \Bigl(4K \frac{\gamma}{(\gamma-1)^2}\Bigr)^{-\frac{1}{\gamma-1}  }w^{\frac{2}{\gamma-1}},
\end{align}
that is
\begin{equation}\label{e:rhal2}
w=\frac{2\sqrt{K\gamma}}{\gamma-1}\rrho^{\frac{\gamma-1}{2}}.
\end{equation}
Then, we rewrite  \eqref{e:MNEul1}--\eqref{e:MNEul3}, with the help of the Makino density $w$, as a symmetric hyperbolic system with a nonlocal term $\del{}^k \Phi$,
\begin{align}
\del{0}w+v^i \del{i} w+\frac{\gamma-1}{2 }w\del{i}v^i=&0, \label{e:eual1} \\
\del{0}v^k+v^i \del{i} v^k +\frac{\gamma-1}{ 2}w\delta^{ik}\del{i}w=& - \del{}^k \Phi, \label{e:eual2}  \\
\Delta  \Phi=&  \rrho, \label{e:eual3}
\end{align}
with initial data
\begin{equation*}%\label{e:eual4}
(w, v^i)=(w_0, v^i_0), \quad \text{on} \quad  \{0\}\times \Rbb^3,
\end{equation*}

Then using the above formulation, by constructing contraction mappings and with the help of the fixed point theorem, Makino \cite{Makino1986} arrived at the following local existence theorem. We omit the detailed proofs which appeared in  \cite{Makino1986}.

\begin{theorem}\emph{(The local existence of Makino solution, see \cite{Makino1986})} \label{t:Newext}
	Assume the initial data $(\rrho_0, v^i_0)\in C^1(\Rbb^3,\Rbb^4) $, $\rrho_0\geq 0$ and has compact support. If
	\begin{equation*}
	(a) \quad 1<\gamma\leq \frac{5}{3}, \quad (\rrho_0)^{\frac{\gamma-1}{2 }}\in H^3(\Rbb^3) \AND  v^i_0\in H^3(\Rbb^3, \Rbb^3),
	\end{equation*}
	or if
	\begin{equation*}
	(b)\quad 1<\gamma<3, \quad \rrho_0 \in H^3(\Rbb^3), \quad (\rrho_0)^{\frac{\gamma-1}{2 }}\in H^4(\Rbb^3) \AND  v^i_0\in H^4(\Rbb^3, \Rbb^3),
	\end{equation*}
	then there is a constant $T>0$, such that the Cauchy problem \eqref{e:MNEul1}--\eqref{e:inidata2} has a Makino solution $(\rrho, v^i)\in C^1([0,T)\times \Rbb^3, \Rbb^4) $ and $ \Phi \in  C^2([0,T)\times\mathbb{R}^3)$, and the solution $(\rrho^{\frac{\gamma-1}{2 }},v^i)$ satisfies
	\begin{equation*}
		(\rrho^{\frac{\gamma-1}{2 }},v^i)\in C^0([0,T),H^{s}(\Rbb^3))\cap C^1([0,T),H^{s-1}(\Rbb^3))
	\end{equation*}
	where $s=3,4$ according $(a)$ and $(b)$ respectively.
\end{theorem}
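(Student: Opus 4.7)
The plan is to use Makino's change of variables $\alpha = \tfrac{2\sqrt{K\gamma}}{\gamma-1}\rrho^{(\gamma-1)/2}$ to recast the degenerate Euler--Poisson system in the symmetric hyperbolic form \eqref{e:eual1}--\eqref{e:eual3} with a nonlocal lower-order source $-\del{}^k\Phi$, and then apply the Friedrichs--Kato machinery for quasilinear symmetric hyperbolic systems. The advantage of the $(\alpha, w^i)$ formulation is that its principal symbol is uniformly symmetric and bounded even where $\rrho = \alpha = 0$, so vacuum is no longer a degeneration of the coefficients; the remaining work is to control the Poisson source and the nonlinear composition $\rrho = c_\gamma \alpha^{2/(\gamma-1)}$.

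First I would fix $s=3$ under hypothesis $(a)$ or $s=4$ under $(b)$ and construct a Picard iteration. Given $(\alpha^{(n)}, w^{(n)}) \in C^0([0,T]; H^s(\Rbb^3))$, define $\Phi^{(n)}$ by \eqref{e:Newpot} with density $\rrho^{(n)} = c_\gamma (\alpha^{(n)})^{2/(\gamma-1)}$, and then solve the linear symmetric hyperbolic system obtained by freezing the convection coefficients of \eqref{e:eual1}--\eqref{e:eual2} at $(\alpha^{(n)}, w^{(n)})$ with forcing $-\del{}^k\Phi^{(n)}$. Standard energy estimates for linear symmetric hyperbolic systems then produce $(\alpha^{(n+1)}, w^{(n+1)}) \in C^0([0,T]; H^s) \cap C^1([0,T]; H^{s-1})$. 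The nonlocal term is controlled by the Newtonian potential bound $\|\nabla \Phi^{(n)}\|_{H^s} \lesssim \|\rrho^{(n)}\|_{H^{s-1}} + \|\rrho^{(n)}\|_{L^1}$, and the compact support of $\rrho_0$ is propagated by finite speed of propagation of the linearized system. A uniform-in-$n$ a priori bound of the form $\tfrac{d}{dt}\|(\alpha^{(n+1)}, w^{(n+1)})\|_{H^s}^2 \le C\bigl(1 + \|(\alpha^{(n)}, w^{(n)})\|_{H^s}\bigr)^{N}$ closes on a small common interval, and contracting the scheme in a weaker $L^2$-type norm produces a limit solution $(\rrho, w^i, \Phi)$ on $[0, T)$.

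The main technical obstacle, and the reason for splitting the hypotheses into $(a)$ and $(b)$, is estimating $\rrho = c_\gamma \alpha^{2/(\gamma-1)}$ in $H^{s-1}$ from $\alpha \in H^s$: the Moser-type composition $\|f(\alpha)\|_{H^s} \lesssim \|\alpha\|_{H^s}$ requires $f$ to be $C^s$ near the range of $\alpha$, which fails at the zero of $\alpha$ unless the exponent $2/(\gamma-1)$ is at least $s$. In case $(a)$ the constraint $\gamma \le 5/3$ gives $2/(\gamma-1) \ge 3 = s$, so $\alpha \mapsto \alpha^{2/(\gamma-1)}$ is $C^3$ even at the vacuum interface and the composition estimate closes. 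In case $(b)$ the exponent may be smaller than $s=4$, so the composition argument fails; instead one imposes $\rrho_0 \in H^3$ as an independent hypothesis, propagates $\rrho$ in $H^3$ through its own continuity equation \eqref{e:MNEul1} using $w^i \in H^4 \hookrightarrow W^{1,\infty}$, and uses $\alpha_0 \in H^4$ to supply the additional derivative demanded by the energy identity for $w^i$. In both cases, the tame free-fall condition \eqref{e:otvel} on $\Omega^\mathsf{c}(t)$ is automatic, since the term $\tfrac{\gamma-1}{2}\alpha \delta^{ik} \del{i}\alpha$ in \eqref{e:eual2} vanishes where $\alpha \equiv 0$, reducing the equation there to $D_t w^k = -\del{}^k \Phi$.
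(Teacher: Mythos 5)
Your proposal reproduces the strategy the paper attributes to Makino --- pass to the Makino density $\alpha$, recast the Euler--Poisson system as the non-degenerate symmetric hyperbolic system \eqref{e:eual1}--\eqref{e:eual3} with a nonlocal lower-order source, and close an iteration/fixed-point argument using a Moser-type composition estimate for $\rrho = c_\gamma \alpha^{2/(\gamma-1)}$ --- so it is essentially the same approach; note that the paper itself does not prove Theorem~\ref{t:Newext} but explicitly defers to \cite{Makino1986} and merely sketches this reduction. Your explanation of the $(a)$/$(b)$ hypothesis split (that $\gamma \le 5/3$ makes $\alpha \mapsto \alpha^{2/(\gamma-1)}$ of class $C^3$ at the vacuum locus so the composition estimate closes at $s=3$, while for larger $\gamma$ one imposes $\rrho_0 \in H^3$ separately and propagates it through the continuity equation using $w^i \in H^4 \hookrightarrow W^{1,\infty}$) is a correct gloss on the role of Makino's composition lemma, and the observation that \eqref{e:eual2} reduces to the free-fall equation \eqref{e:otvel} wherever $\alpha\equiv 0$ correctly accounts for the tame-solution condition.
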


\begin{comment}
	Makino solutions require a stronger regularity on the initial data of $\rrho_0$, i.e.,  $\rrho_0^{\frac{\gamma-1}{2}}\in H^s$ rather than $\rrho_0^{\gamma-1}\in H^s$. There are some improvements in the regularities of solutions with extra exterior velocity equations \eqref{e:otvel} in \cite{Liu1997,Brauer2020} and we point out  \cite{Brauer2018,Brauer2020} obtained this wellposedness with the help of the power estimates by Kateb, Runst and Sickel (see \cite[Proposition $5$ and $8$]{Brauer2020}). However, for simplicity, we currently only use above cited Makino's local existence theorem \ref{t:Newext} directly for the following and leave improvements in the future.
\end{comment}

\subsection{The local existence of  regular solutions to  Problem \ref{prob}}\label{s:ulecls}
This section gives the local existence theorem of the regular solution to  Problem \ref{prob} for certain  initial data.
We first prove the following lemma which states the restrictions onto $(t,\bx)\in [0,T) \times\overline{\Omega(t)}$ of the Makino solution $(\rrho,v^i)$ of the Makino problem  \eqref{e:MNEul1}--\eqref{e:inidata2} is a regular solution to  Problem \ref{prob}.
\begin{lemma}\label{t:relaclta}
	Suppose there is a constant $T>0$ and $(\rrho, v^i,\rPhi)$ is a Makino solution to the system \eqref{e:MNEul1}--\eqref{e:inidata2}.  Define $\Omega(t):= \supp\rrho (t,\cdot) $ and a function $\rw^i:[0,T) \times \Omega(t) \rightarrow \Rbb^3$ satisfying that
	\begin{equation*}
	\rw^i(t,\bx):=v^i(t,\bx)
	\end{equation*}
	for $(t,\bx)\in [0,T) \times \Omega(t)$.
	Then  $(\rrho,\rw^i,\rPhi, \Omega(t))$ is a regular solution of  Problem \ref{prob}.
\end{lemma}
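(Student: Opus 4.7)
\emph{Proposal.} The statement is essentially a verification that the restriction of a tame solution to the support of its density satisfies each clause of Definition \ref{t:maksl}; once we have this, the conclusion about regular solutions follows immediately from Proposition \ref{t:maktoreg}. I would organize the argument as a short checklist, with no serious obstacle, the whole content being a matter of tracking definitions and using the fact that $\rrho$ vanishes off $\Omega(t)$.

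First I would handle the regularity of the extended velocity. By Definition \ref{e:tmsl}, $w^i\in C^1([0,T)\times\Rbb^3,\Rbb^3)$, so the same $w^i$, considered on the subdomain $[0,T)\times \overline{\Omega(t)}\subset[0,T)\times\Rbb^3$, is a $C^1$-extension of $\rw^i=w^i|_{[0,T)\times\Omega(t)}$ to the boundary. This verifies condition \eqref{D:1b} of Definition \ref{t:maksl}. Next, conditions \eqref{D:2b} and $(3^\star)$ follow directly: the tame-solution definition supplies $\rrho\in C^1([0,T)\times\Rbb^3,\Rbb)$ and $\rrho^{(\gamma-1)/2}\in C^1([0,T)\times\Rbb^3,\Rbb)$, while the Newtonian potential formula \eqref{e:Newpot} for $(\rrho,\Omega(t))$ agrees with the $\Rbb^3$-integral furnished by the tame solution because $\rrho(t,\by)=0$ for $\by\notin\Omega(t)=\{\rrho>0\}$, so the integrand vanishes on the complement and the two integrals coincide.

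Then I would check that the equations of the initial value problem $(i)$--$(iii)$ actually hold for the restricted object. Equation \eqref{e:NEul3} is $\Delta\rPhi=\rrho$ on $\Rbb^3$, which is inherited unchanged from \eqref{e:MNEul3}. For \eqref{e:NEul1} and \eqref{e:NEul2} on $\Omega(t)$, one just substitutes $\rw^i=w^i$ into \eqref{e:MNEul1}--\eqref{e:MNEul2}, which hold on the larger set $\Rbb^3$, so they in particular hold on $\Omega(t)$. The initial condition \eqref{e:inidata} is obtained by restricting \eqref{e:inidata2} to $\Omega(0)$, and $\Omega(0)=\{\rrho_0>0\}$ is precompact because $\rrho_0$ has compact support (a standing hypothesis of Theorem \ref{t:Newext}). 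This establishes that $(\rrho,\rw^i,\rPhi,\Omega(t))$ is a Makino solution.

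Finally, invoking Proposition \ref{t:maktoreg} upgrades this Makino solution to a regular solution of the initial value problem $(i)$--$(iii)$, completing the proof. There is genuinely no hard step here: the only thing to be careful about is the interpretation of $\Omega(t)=\supp\rrho(t,\cdot)$ as the open set $\{\rrho(t,\cdot)>0\}$ used throughout the paper, which guarantees that ``restricting to $\Omega(t)$'' preserves both the $C^1$ regularity and the PDE system without needing any trace or boundary argument.
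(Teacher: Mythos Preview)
Your proposal is correct and follows exactly the same approach as the paper: both argue by direct comparison of Definition \ref{e:tmsl} (tame solution) with Definition \ref{t:maksl} (Makino solution), using $w^i|_{[0,T)\times\overline{\Omega(t)}}$ as the required $C^1$-extension. Your write-up is in fact more thorough than the paper's terse proof, which checks only condition \eqref{D:1b} explicitly and leaves the remaining verifications (and the appeal to Proposition \ref{t:maktoreg}) implicit.
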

\begin{proof}
	To conclude this lemma, we only need to compare Definition \ref{t:clsl} of regular solutions and Definition \ref{e:tmsl} of Makino solutions. Noting $\del{\mu}\rho^{\gamma-1}=2\rrho^{\frac{\gamma-1}{2}}\del{\mu}\rrho^{\frac{\gamma-1}{2}} $, we conclude  $\rrho^{\frac{\gamma-1}{2}}\in C^1([0,T)\times \Rbb^3,\Rbb)$ implies  $\rrho^{\gamma-1}\in C^1([0,T)\times \Rbb^3,\Rbb)$. Since $v^i\in C^1([0,T)\times \Rbb^3,\Rbb^3)$ and $\rw^i:=v^i$ in $[0,T)\times\Omega(t)$, then there is an extension $v^i|_{[0,T)\times\overline{\Omega(t)}}$ of $\rw^i$ to the boundary $\del{}\Omega(t)$ and $v^i|_{[0,T)\times\overline{\Omega(t)}}\in C^1([0,T)\times \overline{\Omega(t)},\Rbb^3)$. 	
  	Then, we complete the proof.
\end{proof}

Now let us give the local existence theorem of regular solutions to  Problem \ref{prob}.
\begin{theorem}[The local existence theorem]  \label{t:Newext2}
	Suppose the initial data $(\rrho_0, \rw^i_0)$ of  Problem \ref{prob}  is given by  \eqref{e:inidata}, $\Omega(0)$ is precompact and satisfies   Calder\'{o}n’s uniform cone condition (see Appendix \ref{s:surf}, Definition \ref{t:conecd}), $\rrho_0\in C^1( \Rbb^3,\Rbb_{\geq 0}) $, $\rw^i_0 \in C^1(\Omega(0),\Rbb^3) $. If
	\begin{equation*}
    (a) \quad 1<\gamma\leq \frac{5}{3}, \quad (\rrho_0)^{\frac{\gamma-1}{2 }} \in H^3(\Rbb^3) \AND \rw^i_0\in H^3(\Omega(0),\Rbb^3),
	\end{equation*}
	or if
	\begin{align*}
	(b)\quad 1<\gamma<2, \quad  \rrho_0 \in H^3(\Rbb^3), \quad (\rrho_0)^{\frac{\gamma-1}{2 }}
	\in H^4(\Rbb^3) \AND
	 \rw^i_0\in H^4(\Omega(0),\Rbb^3),
	\end{align*}
	then there is a constant $T>0$, such that  Problem \ref{prob} has a regular solution $(\rrho,\rw^i,\rPhi, \Omega(t)) $ defined by Definition \ref{t:clsl} and this  regular solution is a Makino type solution with a $C^1$ domain.  Moreover,  $\rrho^{\frac{\gamma-1}{2 }}
	\in C^{s-2}([0,T)\times\Rbb^3)$ and $ \Phi \in  C^2([0,T)\times\mathbb{R}^3)$, and the solution $v^i$ satisfies
	\begin{equation*}
     \rw^i\in C^0([0,T),H^{s}(\Omega(t)))\cap C^1([0,T),H^{s-1}(\Omega(t)))
	\end{equation*}
	where $s=3,4$ according to $(a)$ and $(b)$ respectively.
\end{theorem}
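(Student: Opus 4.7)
The plan is to reduce the diffuse boundary problem to the Makino (tame solution) problem on all of $\Rbb^3$, where Theorem~\ref{t:Newext} is already available, and then restrict the tame solution back to the support of the density via Lemma~\ref{t:relaclta}, upgrading it to a classical solution of the diffuse boundary problem through Proposition~\ref{t:maktoreg}. The only obstacle to a direct application of Theorem~\ref{t:Newext} is that the initial velocity $\rw^i_0$ is prescribed only on $\Omega(0)$, whereas Theorem~\ref{t:Newext} requires initial velocity data defined on all of $\Rbb^3$.

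To address this, I would first extend $\rw^i_0$. Since $\Omega(0)$ is precompact and satisfies Calder\'on's uniform cone condition, Calder\'on's extension theorem yields a bounded linear extension operator $E : H^s(\Omega(0),\Rbb^3) \to H^s(\Rbb^3,\Rbb^3)$ for $s\in\{3,4\}$. Setting $w^i_0 := E\rw^i_0$, one has $w^i_0 \in H^s(\Rbb^3,\Rbb^3)$, and by Sobolev embedding (since $s\geq 3$) also $w^i_0 \in C^1(\Rbb^3,\Rbb^3)$. The pair $(\rrho_0,w^i_0)$ now satisfies all the hypotheses of Theorem~\ref{t:Newext}: the density hypotheses are already assumed on $\Rbb^3$, $\rrho_0$ has compact support contained in $\overline{\Omega(0)}$, and the $\gamma$-range in either case $(a)$ or $(b)$ of the present theorem is contained in the corresponding range of Theorem~\ref{t:Newext}. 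Invoking Theorem~\ref{t:Newext} produces a tame solution $(\rrho,w^i,\rPhi)$ on $[0,T)\times\Rbb^3$ for some $T>0$ with the asserted Sobolev and $C^1$/$C^2$ regularities.

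I would then set $\Omega(t) := \supp\rrho(t,\cdot)$ and $\rw^i := w^i|_{[0,T)\times\Omega(t)}$. Lemma~\ref{t:relaclta} certifies that $(\rrho,\rw^i,\rPhi,\Omega(t))$ is a Makino solution of the initial value problem $(i)$--$(iii)$, and Proposition~\ref{t:maktoreg} then promotes it to a classical solution of the diffuse boundary problem $(i)$--$(iv)$ in the sense of Definition~\ref{t:clsl}. The initial data match: $\rrho(0,\cdot) = \rrho_0$ by construction, and on $\Omega(0)$ we have $\rw^i(0,\bx) = w^i(0,\bx) = w^i_0(\bx) = \rw^i_0(\bx)$, since the extension is the identity on $\Omega(0)$; the behavior of $w^i$ outside $\Omega(0)$ is invisible after restriction. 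The stated regularity $\rrho^{(\gamma-1)/2}\in C^{s-2}([0,T)\times\Rbb^3)$ and $\rPhi\in C^2([0,T)\times\Rbb^3)$ transfer directly from Theorem~\ref{t:Newext} via Sobolev embedding, while $\rw^i \in C^0([0,T),H^s(\Omega(t))) \cap C^1([0,T),H^{s-1}(\Omega(t)))$ follows by restriction from the $H^s(\Rbb^3)$ bounds on $w^i$.

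The main technical concern is the Calder\'on extension step: one must confirm that the uniform cone condition on $\partial\Omega(0)$ is precisely what is required for a bounded $H^s$ extension at the orders $s=3,4$, and that the extension preserves the pointwise $C^1$ hypothesis entering Theorem~\ref{t:Newext}. Both are classical consequences of the Calder\'on construction combined with Sobolev embedding, after which the argument is essentially bookkeeping through the equivalences of \S\ref{s:regsl}. Notice that uniqueness and the consequent fact that $\rw^i_0$ outside $\Omega(0)$ does not affect the recovered solution are not needed here—they are handled separately in \S\ref{s:unqthm}.
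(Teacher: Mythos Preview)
Your proposal is correct and follows essentially the same three-step approach as the paper: Calder\'on extension of $\rw^i_0$ to $\Rbb^3$, application of Makino's local existence (Theorem~\ref{t:Newext}) to obtain a tame solution, and restriction to $\Omega(t)$ via Lemma~\ref{t:relaclta} and Proposition~\ref{t:maktoreg}. The paper's proof is slightly more terse on the regularity bookkeeping, but the logic and the key ingredients are identical.
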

\begin{proof}
	We take three steps to prove this theorem:
	
	\underline{Step $1$: Extend $\rw_0^i$ from $\Omega(0)$ to $\Rbb^3$.} Since $\Omega(0)$ satisfies Calder\'{o}n’s uniform cone condition, by applying   Calder\'{o}n extension theorem (see Appendix \ref{s:ana}, Theorem \ref{t:Cdnext}), there exists a simple $(s,2)$-extension  operator $E:H^s(\Omega(0)) \rightarrow H^s(\Rbb^3)$ (for $s=3,4$), such that $v_0^i:=E\rw^i_0=\rw^i_0$ in $\Omega(0)$ for every $\rw^i_0\in H^s(\Omega(0))$, and $\|v^i_0\|_{H^s(\Rbb^3)}\leq K\|\rw^i_0\|_{H^s(\Omega(0))}<\infty$, i.e., the extended data $v^i_0\in H^s(\Rbb^3)\subset C^1(\Rbb^3)$.
	
	\underline{Step $2$: Makino's local existence implies the existence of an extended regular solution.} Using the extended data $(\rrho_0, v_0^i)\in C^1(\Rbb^3,\Rbb^4)$ and noting that $(\rrho_0, v_0^i)$ satisfies the requirements $(a)$ or $(b)$ in Theorem \ref{t:Newext}, Theorem \ref{t:Newext} implies there is a constant $T>0$, such that the Cauchy problem \eqref{e:MNEul1}--\eqref{e:inidata2} has a Makino solution $(\rrho, v^i)\in C^1([0,T)\times \Rbb^3, \Rbb^4) $ and $ \Phi \in  C^2([0,T)\times\mathbb{R}^3)$, which also implies $\rrho^{\frac{\gamma-1}{2}}\in C^{s-2}([0,T)\times \Rbb^3)$ by the Definition \ref{e:tmsl} of Makino solutions.
	
	\underline{Step $3$: Restrain Makino solutions to regular solutions.} Now we have Makino solutions $(\rrho,v^i,\rPhi)$ to Makino's problem \eqref{e:MNEul1}--\eqref{e:inidata2}. by defining $\Omega(t):= \supp\rrho (t,\cdot) $ and a function $\rw^i:[0,T) \times \Omega(t) \rightarrow \Rbb^3$ such that $
	\rw^i(t,\bx):=v^i(t,\bx)$,
	for $(t,\bx)\in [0,T) \times \Omega(t)$.  We are able to use Lemma \ref{t:relaclta} to conclude $(\rrho,\rw^i,\rPhi, \Omega(t))$ is a regular solution to the initial value Problem \ref{prob}.
	In the end, noting that for any $H^s(\Rbb^3)$-function $f$, $\|f\|_{H^s(\Omega(t))}\leq \|f\|_{H^s(\Rbb^3)}$ due to the definition of Calder\'{o}n extension, we complete the proof.
\end{proof}
\begin{remark}
	Since we do not attempt to prove the general existence theorem of the regular solution,
	there may be regular solutions of Problem \ref{prob} which are not from Makino solutions. However, if there is a Makino solution, by  the uniqueness theorem \ref{t:unithm1} later, it is the only solution and there is no other non-Makino type of regular solutions.
\end{remark}

\subsection{Preservation of $C^1$-boundary}\label{s:C1bdy}
We assume the initial boundary $\del{}\Omega(0)$ of the support of the fluid  is a $C^1$ boundary, and, in this section, prove $C^1$ regularity of this boundary preserves during the evolution, i.e., $\del{}\Omega(t)\in C^1$ for $t$ in the existence interval of the regular solution.

\begin{lemma}\label{t:surfc2}
	Suppose $k\in\Zbb_{\geq1}$, $\Omega_0\subset \Rbb^3$ is a precompact domain satisfying  $\del{}\Omega_0\in C^k$, $C^k \ni \mathbf{v}:=( v^i): (T_0,T_1) \times \Rbb^3 \rightarrow \Rbb^3$ $(T_0<0<T_1)$ is a vector field.
	Then
	\begin{enumerate}
		\item \label{t:surfc2.a} \emph{(existence of flow)} there exists a bounded subset $ [0,T)  \times [0,T)  \times \mathcal{D} \subset [0,T)  \times [0,T)  \times \Rbb^3$ where $T\leq T_1$ and $\mathcal{D}\supset \Omega_0$ and a unique time-dependent flow
		$C^k \ni \varphi: [0,T)  \times [0,T)  \times \mathcal{D}
		\ni (t,t_0,\xi) \mapsto \bx\in \Rbb^3$ generalized by the vector field $\mathbf{v}$;
				
		\item \label{t:surfc2.a2}\emph{(integral curves)} for every $\xi \in \mathcal{D}
		$, the curve $\varphi^{(0,\xi)}:=\varphi(\cdot,0,\xi):[0,T)  \rightarrow \Rbb^3$ is the unique maximal integral curve of the vector field $\mathbf{v}$ starting at $(0,\xi)$ (i.e., $\varphi^{(0,\xi)}(0)=\xi$);

		\item \label{t:surfc2.a3}\emph{(inverse)} for every $(t,t_0)\in [0,T) \times [0,T) $,  $\varphi^{(t,t_0)}:=\varphi(t,t_0,\cdot):\Rbb^3\rightarrow \Rbb^3$ is a $C^k$  diffeomorphism with inverse $\varphi^{(t_0,t)}$;
		
		\item \label{t:surfc2.b} \emph{(diffeomorphism invariance of the boundary)} denote $\Omega_t:=\varphi^{(t, 0)}(\Omega_0)$, then $\del{}\Omega_t =\varphi^{(t, 0)}\bigl(\del{}\Omega_0 \bigr)$ and  $\Rbb^3\setminus \overline{\Omega_t} =\varphi^{(t, 0)}\bigl(\Rbb^3\setminus \overline{\Omega_0} \bigr)$;

		\item \label{t:surfc2.c} \emph{(regularity of evolutional boundary)}  $\del{}\Omega_t\in C^k$;

		\item \label{t:surfc2.d} \emph{(foliations)}
		the lateral boundary $[0,T)  \times \del{}\Omega_t$ is foliated by  integral curves starting from $\del{}\Omega_0$, that is, $
		[0,T) \times \del{}\Omega_t=  \bigl\{\bigl(t,\varphi^{(0,\xi)}(t)\bigr)\;|\;t\in [0,T) , \xi\in \del{}\Omega_0\bigr\}$.
	\end{enumerate}
\end{lemma}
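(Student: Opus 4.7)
The plan is to assemble the statement from two standard ingredients: the Picard--Lindel\"of theorem for the $C^1$ vector field $\mathbf{w}$ (which gives (\ref{t:surfc2.a})--(\ref{t:surfc2.a3})), and the elementary fact that $C^1$ diffeomorphisms preserve the boundary and the $C^1$-regularity of embedded hypersurfaces (which gives (\ref{t:surfc2.b})--(\ref{t:surfc2.d})). No deep analysis is required; the work is in bookkeeping.

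First I would set up (\ref{t:surfc2.a}) and (\ref{t:surfc2.a2}). Because $\mathbf{w} \in C^1((T_0,T_1) \times \Rbb^3)$, it is locally Lipschitz in $\bx$, so for each $(t_0,\xi)$ the ODE
\begin{equation*}
    \frac{d}{dt}\varphi(t,t_0,\xi) = \mathbf{w}\bigl(t,\varphi(t,t_0,\xi)\bigr), \qquad \varphi(t_0,t_0,\xi)=\xi,
\end{equation*}
admits a unique maximal solution. Since $\Omega_0$ is precompact, standard continuous dependence yields an open neighborhood $\mathcal{D} \supset \overline{\Omega_0}$ and a uniform existence time $T \le T_1$ on which $\varphi$ is jointly defined and $C^1$ (see e.g.\ the classical theorem on smoothness of flows with respect to initial data). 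The integral-curve statement in (\ref{t:surfc2.a2}) is immediate from the construction with $t_0=0$.

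Next, for (\ref{t:surfc2.a3}) I would invoke the flow (semi)group property, which follows from uniqueness: for any admissible $t_0, t_1, t_2$,
\begin{equation*}
    \varphi^{(t_2,t_1)} \circ \varphi^{(t_1,t_0)} = \varphi^{(t_2,t_0)}.
\end{equation*}
Specializing $t_2 = t_0$ gives $\varphi^{(t_0,t)} \circ \varphi^{(t,t_0)} = \mathrm{id}$, whence $\varphi^{(t,t_0)}$ is a bijection with $C^1$ inverse $\varphi^{(t_0,t)}$, i.e.\ a $C^1$-diffeomorphism. Parts (\ref{t:surfc2.b}) and (\ref{t:surfc2.c}) then reduce to topological facts about diffeomorphisms: since $\varphi^{(t,0)}$ is a homeomorphism of $\Rbb^3$, it sends $\Omega_0$, $\partial\Omega_0$ and $\Rbb^3\setminus\overline{\Omega_0}$ onto $\Omega_t$, $\partial\Omega_t$ and $\Rbb^3\setminus\overline{\Omega_t}$ respectively. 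Moreover, if $\partial\Omega_0$ is locally the graph of a $C^1$ function, then its image under the $C^1$-diffeomorphism $\varphi^{(t,0)}$ is again a $C^1$-embedded hypersurface, so $\partial\Omega_t \in C^1$. Finally, (\ref{t:surfc2.d}) is just the identity $\varphi^{(t,0)}(\xi) = \varphi^{(0,\xi)}(t)$, which is immediate from the flow property and (\ref{t:surfc2.a2}).

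If there is any subtlety, it lies in (\ref{t:surfc2.a}): one must verify that the common maximal time $T$ can be chosen positive and uniform over a neighborhood $\mathcal{D}$ of $\overline{\Omega_0}$, rather than merely pointwise. This is the only place where precompactness of $\Omega_0$ is actually used; it lets us cover $\overline{\Omega_0}$ by finitely many Picard balls and take $T$ to be the minimum of the corresponding local times. Everything else is formal consequence of the $C^1$-diffeomorphism property of $\varphi^{(t,t_0)}$.
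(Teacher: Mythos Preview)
Your proposal is correct and follows essentially the same route as the paper: invoke the fundamental theorem on (time-dependent) flows for (\ref{t:surfc2.a})--(\ref{t:surfc2.a3}), then use diffeomorphism invariance of the boundary for (\ref{t:surfc2.b}) and the preservation of $C^1$-charts under the $C^1$-diffeomorphism $\varphi^{(t,0)}$ for (\ref{t:surfc2.c}), with (\ref{t:surfc2.d}) following by rewriting $\varphi^{(t,0)}(\xi)=\varphi^{(0,\xi)}(t)$. The paper is more explicit in (\ref{t:surfc2.c}), constructing the local straightening map $\psi\circ(\varphi^{(t,0)})^{-1}$ directly, but this is exactly the content behind your sentence ``its image under the $C^1$-diffeomorphism $\varphi^{(t,0)}$ is again a $C^1$-embedded hypersurface.''
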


\begin{proof}
	\eqref{t:surfc2.a}, \eqref{t:surfc2.a2} and \eqref{t:surfc2.a3} are the direct consequence of the fundamental theorem on flows (see Theorem \ref{t:flth2} and \ref{t:flowft}).
	
    \eqref{t:surfc2.b} is due to the diffeomorphism invariance of the boundary, see Theorem \ref{t:bdrythm} (see Appendix \ref{s:surf}).
	
	\eqref{t:surfc2.c} Since $\del{}\Omega_0\in C^k$, by Definition \ref{t:bdryreg} and Theorem \ref{t:bdsbmf} (see Appendix \ref{s:surf}), implies that for every point $\xi_0\in \del{}\Omega_0$, there is a ball $B_r(\xi_0)$ and a bijective mapping $\psi:B_r(\xi_0)\rightarrow \mathcal{B}\subset \Rbb^3$ such that $(1)$ $\psi(B_r(\xi_0)\cap \Omega_0)\subset \Rbb^3_+$; $(2)$ $\psi(B_r(\xi_0)\cap \del{} \Omega_0)\subset \del{}\Rbb^3_+$; $(3)$ $\psi\in C^k$ and $\psi^{-1}\in C^k$.
	It is direct that
	$\varphi \in C^k$ yields 	
	$\varphi^{(t,0)} \in C^k(\Rbb^3,\Rbb^3)$. Then for any point $\bx_0\in \del{}\Omega_t$, there is $\xi_0\in \del{}\Omega_0$, some neighborhood $\mathcal{U}(\bx_0)$ of $\bx_0$ and $r,R>0$, such that $\bx_0:=\varphi^{(t,0)}(\xi_0)$ and $B_{R}(\bx_0)\subset \mathcal{U}(\bx_0)=\varphi^{(t,0)}(B_r(\xi_0))$ (since $ \varphi^{(t,0)}$ is a $C^k$ diffeomorphism with inverse $ (\varphi^{(t,0)})^{-1}=\varphi^{(0,t)}$ by \eqref{t:surfc2.a}). Then there is a bijective mapping $\psi\circ(\varphi^{(t,0)})^{-1} :B_R(\bx_0)\rightarrow \mathcal{B} \subset \Rbb^3$ such that, we \textit{claim}, $(1)$ $\psi\circ(\varphi^{(t,0)})^{-1}(B_R(\bx_0)\cap \Omega_t)\subset \Rbb^3_+$; $(2)$ $\psi\circ(\varphi^{(t,0)})^{-1}(B_R(\bx_0)\cap \del{} \Omega_t)\subset \del{}\Rbb^3_+$; $(3)$ $\psi\circ(\varphi^{(t,0)})^{-1}\in C^k$ and $\bigl((\varphi^{(t,0)})^{-1}\circ \psi\bigr)^{-1}\in C^k$. We expound $(1)$ explicitly and then $(2)$ can be similarly proved. The inclusion relations of mappings, with the help of that $ (\varphi^{(t,0)})^{-1}=\varphi^{(0,t)}$, $\Omega_t =\varphi^{(t,0)}( \Omega_0)$ and $\del{}\Omega_t =\varphi^{(t,0)}(\del{}\Omega_0)$ by \eqref{t:surfc2.b}, indicate that
	\begin{align*}
	(\varphi^{(t,0)})^{-1}(B_R(\bx_0)\cap \Omega_t)\subset & (\varphi^{(t,0)})^{-1}\bigl(B_R(\bx_0)\bigr)\cap (\varphi^{(t,0)})^{-1}\bigl(\Omega_t\bigr)  \notag  \\
	\subset & (\varphi^{(t,0)})^{-1}\bigl(\mathcal{U}(\bx_0)\bigr)\cap \Omega_0
	= B_r(\xi_0)\cap \Omega_0.
	\end{align*}
	Then using $\psi(B_r(\xi_0)\cap \Omega_0)\subset \Rbb^3_+$, we conclude that $\psi\circ(\varphi^{(t,0)})^{-1}(B_R(\bx_0)\cap \Omega_t)\subset \Rbb^3_+$. This, by Definition \ref{t:bdryreg} and Theorem \ref{t:bdsbmf} again, concludes that  $\del{}\Omega_t\in C^k$.

	\eqref{t:surfc2.d} Using the previous \eqref{t:surfc2.b},
	\begin{align*}
	[0,T)  \times \del{}\Omega_t
	=[0,T)  \times  \varphi^{(t,0)}(\del{}\Omega_0)
	= & \bigl\{\bigl(t,\varphi (t,0, \xi)\bigr)\;|\;t\in [0,T) , \xi\in \del{}\Omega_0 \bigr\}\notag  \\
	= &  \bigl\{\bigl(t,\varphi^{(0,\xi)}(t)\bigr)\;|\;t\in [0,T) , \xi\in \del{}\Omega_0\bigr\},
	\end{align*}	
	which means all the integral curves $\varphi^{(0,\xi)}(t)$ (for all $t\in [0,T)  $ and $\xi\in \del{}\Omega_0$) form the lateral boundary $[0,T)  \times \del{}\Omega_t$.
\end{proof}

This lemma gives a geometric preparation for the boundary evolution of  Problem \ref{prob}. Under the assumptions of the local existence Theorem \ref{t:Newext2}, in order to prove $\del{}\Omega(t)\in C^k$, we have the velocity field to play the role of the above $C^k$ vector field, then we have $\del{}\Omega_t\in C^k$ by using Lemma \ref{t:surfc2} and letting $\Omega_0=\Omega(0)$. However, this has not implied $\del{}\Omega(t)\in C^k$ yet, since, by recalling the definition, $\Omega(t):=\supp\rrho(t,\cdot) =\{\bx\in \Rbb^3\;|\;\rrho(t,\bx)>0\}$. Therefore, to conclude $\del{}\Omega(t)\in C^k$, we have to verify $\varphi^{(t,0)}\bigl(\Omega(0)\bigr)=\Omega_t=\Omega(t)=\supp\rrho(t,\cdot)$.

\begin{theorem}\label{t:C1bdry}
	Suppose $(\rrho,\rw^i,\rPhi,\Omega(t))$ is a regular solution to  Problem \ref{prob} on $t\in[0,T)$ for $T>0$, if $\del{}\Omega(0)$ is a $C^1$ boundary, then $\del{}\Omega(t)$ is of $C^1$ for  $t\in [0,T)$ as well.
\end{theorem}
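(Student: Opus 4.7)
The plan is to exhibit $\Omega(t)$ as the image of $\Omega(0)$ under a globally defined $C^1$ flow and then invoke Lemma \ref{t:surfc2} to transfer the $C^1$ regularity of $\del{}\Omega(0)$ to $\del{}\Omega(t)$. Two ingredients are needed: a $C^1$ vector field on all of $[0,T)\times\Rbb^3$, and the identification of the image of the flow with $\supp\rrho(t,\cdot)$.

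First I would extend the velocity field $w^i\in C^1([0,T)\times\overline{\Omega(t)},\Rbb^3)$ guaranteed by Definition \ref{t:clsl}.\eqref{D:1} to a $C^1$ vector field $\tw^i:[0,T)\times\Rbb^3\to\Rbb^3$. Since $\Omega(0)$ is precompact and the spacetime tube $\bigcup_{t\in[0,T)}\{t\}\times\overline{\Omega(t)}$ is relatively closed in $[0,T)\times\Rbb^3$, Whitney's extension theorem applied to the $C^1$-jet of $w^i$ furnishes such a $\tw^i$. Applying Lemma \ref{t:surfc2} to $\tw^i$ with $\Omega_0:=\Omega(0)$ produces a $C^1$ flow $\varphi$ and a set $\Omega_t:=\varphi^{(t,0)}(\Omega(0))$ satisfying $\del{}\Omega_t=\varphi^{(t,0)}(\del{}\Omega(0))\in C^1$.

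The main step is to prove $\Omega_t=\Omega(t)$. Along the flow $\bx=\varphi^{(t,0)}(\xi)$, the continuity equation \eqref{e:NEul1} reads, wherever $\bx\in\Omega(t)$, as the linear ODE $\del{t}\underline{\rrho}(t,\xi)=-\underline{\rrho}(t,\xi)\,\underline{\del{i}w^i}(t,\xi)$, whose solution $\underline{\rrho}(t,\xi)=\rrho_0(\xi)\exp\bigl(-\int_0^t\underline{\del{i}w^i}(s,\xi)\,ds\bigr)$ preserves the sign of $\rrho_0(\xi)$. For the inclusion $\Omega_t\subseteq\Omega(t)$, fix $\xi\in\Omega(0)$ and let $t^\star\in(0,T)$ be the first time (if any) at which $\varphi^{(t,0)}(\xi)$ meets $\del{}\Omega(t)$; continuity of $\rrho\in C^1$ and the open-set characterization $\Omega(t^\star)=\{\rrho(t^\star,\cdot)>0\}$ force $\rrho(t^\star,\varphi^{(t^\star,0)}(\xi))=0$, whereas the ODE, extended by continuity up to $t^\star$, gives a strictly positive value --- a contradiction. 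The reverse inclusion $\Omega(t)\subseteq\Omega_t$ is obtained by running the analogous argument along the backward characteristic from any $\bx\in\Omega(t)$, using $\varphi^{(0,t)}$.

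The main obstacle is this ``no-escape'' claim: that a characteristic starting in $\Omega(0)$ cannot cross $\del{}\Omega(t)$. It rests on the continuity of $\rrho$ up to $\del{}\Omega(t)$ combined with the boundedness of $\del{i}w^i$ on compact sub-tubes of $\overline{\Omega(t)}$ provided by the $C^1$-extension, which renders the exponential factor uniformly positive on any compact subinterval of $[0,T)$. The Whitney extension step itself is routine but relies on $w^i$ being genuinely $C^1$ on the closure of the moving domain; the $C^1$ regularity of $\del{}\Omega(0)$ and of $w^i$ bootstrap forward in time to yield a uniformly $C^1$ lateral spacetime boundary, after which the extension is standard.
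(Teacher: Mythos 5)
Your overall strategy --- build a global $C^1$ vector field, invoke Lemma \ref{t:surfc2} to get a flow and the regularity of $\del{}\Omega_t=\varphi^{(t,0)}(\del{}\Omega(0))$, then show $\Omega_t=\Omega(t)$ via the characteristic ODE for $\rrho$ --- matches the paper's. The second step is essentially what the paper does: your ``first exit time'' contradiction argument is an unwinding of the explicit formula $\rrho(t,\chi^\xi(t))=\rrho(0,\xi)\exp\bigl(-\int_0^t\del{i}w^i\,d\tau\bigr)$ that the paper writes down, which gives both inclusions at once; both are fine.

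Where the two proofs diverge is in how the global $C^1$ vector field is produced, and here your route has a genuine gap. The paper does \emph{not} extend the velocity of the given solution directly: it extends the \emph{initial data} $\rw_0^i$ from $\Omega(0)$ to $\Rbb^3$ via Calder\'{o}n's extension (which needs only the $C^1$, and hence cone-condition, geometry of $\Omega(0)$, already assumed) and then re-solves the Makino problem, so the resulting $w^i\in C^1((T_0,T_1)\times\Rbb^3)$ comes for free from Theorem \ref{t:Newext}, with no geometric hypothesis on the moving tube. You instead propose to Whitney-extend the $C^1$-jet of $w^i$ off the spacetime tube $\bigcup_{t}\{t\}\times\overline{\Omega(t)}$. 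Whitney's theorem for $C^1$-jets requires the Whitney compatibility estimate $f(x)-f(y)-\nabla f(y)\cdot(x-y)=o(|x-y|)$ uniformly on the closed set, and that estimate does \emph{not} follow from ``$f$ and $\nabla f$ extend continuously to the closure'' unless the set is geometrically tame (e.g.\ quasiconvex); it can fail across cusps or slits. But whether the tube has such regularity at time $t>0$ is governed by $\del{}\Omega(t)$ --- and $\del{}\Omega(t)=\supp$-boundary of a nonnegative $C^1$ function, which a priori can be arbitrarily rough. This is exactly the regularity you are trying to prove. Your last sentence (``the $C^1$ regularity of $\del{}\Omega(0)$ and of $w^i$ bootstrap forward in time to yield a uniformly $C^1$ lateral spacetime boundary, after which the extension is standard'') acknowledges this but replaces the argument with the conclusion: to make it honest you would need a genuine continuous-induction argument (extend $w^i$ on a short time interval using only $\del{}\Omega(0)\in C^1$, flow forward, re-establish the cone/quasiconvexity condition at the new time, repeat), and that is not sketched. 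A secondary point: Lemma \ref{t:surfc2} requires the vector field on an interval $(T_0,T_1)$ with $T_0<0$, so any extension must also reach slightly negative times; the paper handles this by re-solving the Makino system backward as well, whereas your $[0,T)$-extension doesn't quite feed the lemma as stated.
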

\begin{proof}
\underline{Firstly, let us derive a flow and $\Omega_t$.} Steps $1$ and $2$ from the proof of Theorem \ref{t:Newext2} imply there is a vector field $ v^i\in C^1((T_0,T_1)\times \Rbb^3, \Rbb^3)$ where $T_0<0<T_1$ (using these steps in future and past respectively). Then by letting $\Omega_0:=\Omega(0)$ and Lemma \ref{t:surfc2}, we obtain the flow $\varphi$ described by the preceding lemma.
Denote the flow $\varphi$ restricted on the submanifold $[0,T)\times \{0\}\times \mathcal{D}	$ by $\chi:[0,T)\times \mathcal{D} \ni (t,\xi)\mapsto \bx\in \Rbb^3$,
\begin{equation*}%\label{e:chi}
	\chi(t,\xi):=\varphi(t,0,\xi), \quad \chi^t(\xi):=\chi(t,\xi)=\varphi^{(t,0)}(\xi) \AND \chi^\xi(t):=\chi(t,\xi)=\varphi^{(0,\xi)}(t) .
\end{equation*}
Then $\Omega_t=\chi^t\bigl(\Omega(0)\bigr)$ and $\del{}\Omega_t\in C^1$ by Lemma \ref{t:surfc2}.\eqref{t:surfc2.c}.

\underline{Secondly, we prove $\Omega(t)=\Omega_t=\chi^t\bigl(\Omega(0)\bigr)$.} In other words, we have to verify that the image of the initial support of the density is still the support of the density, i.e., $\chi^t\bigl(\supp\rrho(0,\cdot)\bigr) =\supp\rrho(t,\cdot)$. For every $\bx\in \chi^t\bigl(\Omega(0)\bigr)$, there exists a point $\xi\in \Omega(0)$ which means $\rrho(0,\xi) > 0$, such that $\bx= \chi^t(\xi) $. Because of the fact $\chi^{t}(\xi)=\chi^\xi(t)$ for every $\xi\in \Omega(0)$, by denoting the integral curve (characteristic) starting from $\chi^\xi(0)=\xi$ by
\begin{equation*}%\label{e:intcur}
    \chi^\xi(t)=\chi(t,\xi)=\bx,
\end{equation*}
we have (by the definition of integral curves)
\begin{align}\label{e:dxw1}
\frac{d}{dt}  \chi^\xi(t) = \mathbf{v}(t, \chi^\xi(t)).  %\leq \kappa.
\end{align}
Then \eqref{e:NEul1} for any point on the integral curve $\chi^\xi(t)$, with the help of \eqref{e:dxw1}, yields
\begin{align*}%\label{e:daleq}
\frac{d}{dt} \rrho(t,\chi^\xi(t))=&D_t\rrho(t,\bx)=\del{0}\rrho(t,\bx)+v^i(t,\bx) \del{i} \rrho(t,\bx) \notag \\
=&- \rrho(t,\chi^\xi(t)) \del{i} v^i(t,\chi^\xi(t)),
\end{align*}
which implies that
\begin{align}\label{e:aa0}
\rrho(t, \bx)=\rrho( 0 , \xi) \exp{\Bigl(-  \int^t_0 \del{i} v^i(\tau,\chi^\xi(\tau)) d\tau\Bigr)}.
\end{align}
This expression \eqref{e:aa0} directly leads to $\rrho(t, \bx)>0$ which means $\bx\in \Omega(t)$, and further, we derive $\chi^t\bigl(\Omega(0)\bigr) \subset \Omega(t)$.

On the other hand, for every $\bx\in \Omega(t)$, that is, $\rrho(t,\bx)>0$. Still  \eqref{e:aa0} implies $\rrho( 0 , \xi)>0$ where, as above, $\bx= \chi^t(\xi) $. This yields $\xi\in\Omega(0)$, $\bx\in \chi^t(\Omega(0))$ and, in turn, concludes  $\Omega(t) \subset \chi^t\bigl(\Omega(0)\bigr)$.

Therefore, $\Omega(t)=\Omega_t=\chi^t\bigl(\Omega(0)\bigr)$, which, in turn, implies $\del{}\Omega(t)\in C^1$ due to $\del{}\Omega_t\in C^1$, and it is direct that $T$ can be extended to the existence interval of the solution by using the above arguments continuously. Then we complete this proof. 	
\end{proof}

The following Proposition \ref{t:bdgl} gives the complete boundary $\del{}\bigl([0,T]\times  \Omega(t)\bigr)$ (lateral, top and bottom boundaries) is of Lipschitz. This property enables us to use Stein Extension Theorem \ref{t:extentionS} (see Appendix \ref{s:extentionS}) to extend the solution in $[0,T]\times \Omega(t)$ to the whole $\Rbb^4$, which is a crucial step for the uniqueness theorem.

\begin{proposition}[Boundary Gluing]\label{t:bdgl}
	Suppose $\mathcal{D}_l:=[0,T]\times \del{}\Omega(t)$, $\mathcal{D}_t:=\{T\}\times\Omega(T)$ and $\mathcal{D}_b:=\{0\}\times\Omega(0)$ are  lateral, top and bottom boundaries given by the solution of  Problem \ref{prob}, and $\del{}\Omega(0)$ is a $C^1$ boundary of $\Omega(0)$. Then the lateral boundary $\mathcal{D}_l=[0,T]\times \del{}\Omega(t)\in C^1$ and the complete boundary
	\begin{align*}
		\del{}\bigl([0,T]\times  \Omega(t)\bigr)=\mathcal{D}_l \cup \mathcal{D}_b \cup \mathcal{D}_t
	\end{align*}
is a Lipschitz boundary.
\end{proposition}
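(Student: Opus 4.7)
The plan is to upgrade the pointwise-in-$t$ regularity $\partial\Omega(t)\in C^{1}$ established in Theorem~\ref{t:C1bdry} to joint $C^{1}$-regularity of the lateral surface $\mathcal{D}_l$ by exploiting the smooth flow from Lemma~\ref{t:surfc2}, and then resolve the meeting of $\mathcal{D}_l$ with the caps $\mathcal{D}_b,\mathcal{D}_t$ by a local change of variables that reduces the corner to the standard first quadrant.

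First I would prove $\mathcal{D}_l\in C^{1}$. Using the flow $\chi$ of \eqref{e:chi} generated by the $C^{1}$-extended velocity field from Step~$1$ of the proof of Theorem~\ref{t:Newext2}, the map $\Psi(t,\xi):=(t,\chi^{t}(\xi))$ carries $[0,T]\times\partial\Omega(0)$ bijectively onto $\mathcal{D}_l$ and is jointly $C^{1}$ in $(t,\xi)$ with a $C^{1}$ inverse $(t,\bx)\mapsto(t,\varphi^{(0,t)}(\bx))$ by Lemma~\ref{t:surfc2}.\eqref{t:surfc2.a}--\eqref{t:surfc2.b}. Given a local $C^{1}$ defining function $F_{0}$ for $\partial\Omega(0)$ near $\xi_{0}\in\partial\Omega(0)$, the function $F(t,\bx):=F_{0}(\varphi^{(0,t)}(\bx))$ is $C^{1}$ on a neighbourhood of the trajectory $\{(t,\chi^{t}(\xi_{0}))\}_{t\in[0,T]}$ in $[0,T]\times\mathbb{R}^{3}$, its spatial gradient equals $(D\varphi^{(0,t)})^{\top}\nabla F_{0}$ which is non-zero since $\varphi^{(0,t)}$ is a $C^{1}$-diffeomorphism, and it vanishes precisely on $\mathcal{D}_l$. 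Hence $\mathcal{D}_l$ is locally a $C^{1}$ graph over a coordinate hyperplane in $[0,T]\times\mathbb{R}^{3}$.

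Next I would address the Lipschitz property of $\partial\bigl([0,T]\times\Omega(t)\bigr)$. Away from the two corner rings $\{0\}\times\partial\Omega(0)$ and $\{T\}\times\partial\Omega(T)$, every boundary point lies in the relative interior of one of $\mathcal{D}_l$, $\mathcal{D}_b$, $\mathcal{D}_t$ and inherits a $C^{1}$ (hence Lipschitz) local graph representation from the previous step together with the flatness of the caps. Fix a corner point $(0,\xi_{0})$ (the case $t=T$ is identical). Choose local spatial coordinates $(x^{1},x^{2},x^{3})$ near $\xi_{0}$ in which $\partial\Omega(0)=\{x^{3}=g(x^{1},x^{2})\}$ for some $C^{1}$ function $g$, with $\Omega(0)$ corresponding to $x^{3}<g$. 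By Step~$1$, after shrinking the neighbourhood $\mathcal{D}_l$ is the graph $\{x^{3}=G(t,x^{1},x^{2})\}$ of a $C^{1}$ function $G$ satisfying $G(0,\cdot)=g$, and locally
\begin{equation*}
	[0,T]\times\Omega(t)=\bigl\{(t,x^{1},x^{2},x^{3})\colon t\geq 0,\; x^{3}<G(t,x^{1},x^{2})\bigr\}.
\end{equation*}
The $C^{1}$-diffeomorphism $(t,x^{1},x^{2},x^{3})\mapsto(v,x^{1},x^{2},u)$ with $v:=t$ and $u:=G(t,x^{1},x^{2})-x^{3}$ (invertible via $x^{3}=G(v,x^{1},x^{2})-u$) turns this region into the product $\{u>0,\ v>0\}\times\mathbb{R}^{2}$.

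Finally, the first quadrant $\{u>0,\ v>0\}$ is, after the $45^{\circ}$ rotation $p:=(u-v)/\sqrt{2}$, $q:=(u+v)/\sqrt{2}$, the open epigraph $\{q>|p|\}$ of the Lipschitz function $p\mapsto|p|$, so its boundary is Lipschitz; the product with $\mathbb{R}^{2}$ remains Lipschitz. Since the Lipschitz property is preserved under composition with $C^{1}$-diffeomorphisms, pulling back through the straightening of the previous step shows that $[0,T]\times\Omega(t)$ has Lipschitz boundary near $(0,\xi_{0})$, and combining with the $C^{1}$ representations away from the corners concludes that $\mathcal{D}_l\cup\mathcal{D}_b\cup\mathcal{D}_t$ is Lipschitz. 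The main obstacle is precisely the corner analysis: a priori the union of the $C^{1}$ lateral surface and the flat cap is only continuous, and one must verify that their transversal meeting yields a genuine Lipschitz graph. This is what the $C^{1}$ straightening to the product $\{u>0,\ v>0\}\times\mathbb{R}^{2}$ followed by the $45^{\circ}$ rotation accomplishes, packaging all of the non-smoothness into the single Lipschitz function $|p|$.
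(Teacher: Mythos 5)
Your argument for $\mathcal{D}_l\in C^1$ is essentially the same as the paper's: both push the local $C^1$ defining function of $\partial\Omega(0)$ forward through the $C^1$ flow of Lemma~\ref{t:surfc2} and invoke the implicit function theorem. The corner analysis, however, takes a genuinely different route. The paper rotates in the $(x^0,x^3)$-plane by an angle $\vartheta$ with $\tan\vartheta=w^3+\sqrt{1+(w^3)^2}$, chosen so that the $\hat{x}^0$-axis bisects the angle between the normals of $\mathcal{D}_l$ and $\mathcal{D}_b$ at the corner; in the rotated frame both hypersurfaces become graphs $\hat{x}^0=\hat{f}(\hat{x}^1,\hat{x}^2,\hat{x}^3)$ and $\hat{x}^0=\hat{x}^3\tan\vartheta$ over the same slab, and the paper then shows (via intermediate-value arguments) that the glued boundary is precisely the graph of the pointwise maximum $\mathfrak{F}=\max\{\hat{f},\hat{x}^3\tan\vartheta\}$, which is Lipschitz as the max of two $C^1$ functions. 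You instead straighten the corner to the model wedge $\{u>0,v>0\}\times\mathbb{R}^2$ via the $C^1$ change of variables $v=t$, $u=G(t,x^1,x^2)-x^3$, observe after a $45^\circ$ rotation that the first quadrant is the epigraph of $|p|$, and then pull back. Your reduction is conceptually cleaner in that it packages all the non-smoothness into the single model function $|p|$ and makes the transversality of $\mathcal{D}_l$ and $\mathcal{D}_b$ manifest. However, it leans on the assertion that a $C^1$ diffeomorphism carries a Lipschitz domain (in the graph sense of Definition~\ref{t:bdryreg}) to a Lipschitz domain. That statement is true (a $C^1$ map is uniformly close to its linearization on small scales, and linear bijections send Lipschitz graphs to Lipschitz graphs after a suitable rotation), but it is a nontrivial lemma which appears nowhere in the paper's Appendix~\ref{s:surf} toolkit and is known to fail for merely bi-Lipschitz maps, so a complete write-up along your lines would need to supply either a proof or a citation. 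The paper's explicit max-of-graphs construction buys exactly the avoidance of that invariance lemma, at the cost of the somewhat laborious verification in \eqref{e:dt1}--\eqref{e:dt2} that $\mathfrak{F}$ does represent the boundary.
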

We put the proof of this proposition in Appendix \ref{s:bdgl} to avoid deviating from the main objective too far.

\subsection{The uniqueness of the solution to  Problem \ref{prob}}\label{s:unqthm}
In this section, we prove the uniqueness theorem of the regular solution of  Problem \ref{prob} (the proof of this theorem is inspired by the relative entropy-entropy flux method given in \cite[\S $2$, Step $2$]{Luo2014}). This theorem helps us confirm that there is only one regular solution and this one has to be the restriction of the Makino one, and rule out all other possibilities if the initial data are in the form of Theorem \ref{t:Newext2}.$(1)$ and $(2)$. Although the exterior data of the velocity $v^i$ of the corresponding Makino solutions can be selected with certain freedoms, the regular solution is unique and independent of selections of the exterior data of the velocity of the corresponding Makino solutions once the interior data of the velocity are fixed. This phenomenon is essentially due to the degenerated sound cones near the boundary (see \cite{Brauer1998} for details). Before stating the uniqueness theorem, we first present the following lemma which is a direct result of Lemma \ref{t:unqr3} (see Appendix \ref{a:Lpf}).  
\begin{lemma}\label{t:samedt}
	Under the assumptions of Lemma \ref{t:unqr3}, if initial data satisfy
	\begin{equation*}
	\Omega_1(0)=\Omega_2(0), \quad \rrho_1(0,\bx)=\rrho_2(0,\bx) \AND v^i_1(0,\bx)=v^i_2(0,\bx)
	\end{equation*}
	for $\bx\in \Omega_1(0)=\Omega_2(0)$, then the solutions satisfy
	\begin{equation*}
	\Omega_1(t)=\Omega_2(t), \quad \rrho_1(t,\bx)=\rrho_2(t,\bx) \AND v^i_1(t,\bx)=v^i_2(t,\bx),
	\end{equation*}
	for $(t,\bx)\in[0,T)\times\Omega_1(t)\bigl(=[0,T)\times\Omega_2(t)\bigr)$.
\end{lemma}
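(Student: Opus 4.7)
\textbf{Proof plan for Lemma~\ref{t:samedt}.} The plan is to reduce to Lemma~\ref{t:unqr3}, whose conclusion is the uniqueness of the pair $(\rrho,w^i)$ on all of $\Rbb^3$ under the hypothesis that the initial data coincide on $\Rbb^3$. Here we are only told the data coincide on the common support $\Omega_1(0)=\Omega_2(0)$; nothing is known about $w^i_1(0,\cdot)$ versus $w^i_2(0,\cdot)$ outside this set. The core observation driving the strategy is that outside the support the density vanishes and remains zero along the characteristics (by the representation \eqref{e:aa0} already established in the proof of Theorem~\ref{t:C1bdry}), so the momentum equation is degenerate there and the velocity plays a purely kinematic role.

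First I would use Proposition~\ref{t:bdgl} to assert that, for each $i=1,2$, the spacetime domain $[0,T)\times\overline{\Omega_i(t)}$ has Lipschitz boundary; this permits an application of the Stein Extension Theorem (Theorem~\ref{t:extentionS}) to the velocity field $\rw_i^i$, yielding an extension $\tilde w_i^i$ defined on $[0,T)\times\Rbb^3$ with the same Sobolev regularity. Since the initial velocities $\tilde w_1^i(0,\cdot)$ and $\tilde w_2^i(0,\cdot)$ both restrict to the common field $\rw_0^i$ on the common set $\Omega_1(0)=\Omega_2(0)$, I can modify them off this set so that the two extensions coincide on all of $\Rbb^3$ at time $t=0$, say by replacing $\tilde w_2^i(0,\cdot)$ on $\Rbb^3\setminus\Omega_2(0)$ with the values of $\tilde w_1^i(0,\cdot)$ there (this is legitimate because the extension is only a technical tool for applying the $\Rbb^3$-level uniqueness).

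Second, after this adjustment, the hypotheses of Lemma~\ref{t:unqr3} apply: both triples $(\rrho_i,\tilde w_i^i,\rPhi_i)$ are admissible solutions on $\Rbb^3$ (the mass equation holds trivially in the exterior since $\rrho_i\equiv 0$ there by \eqref{e:aa0}, and $p_i=K\rrho_i^\gamma\equiv 0$ there so the momentum equation is satisfied in the weighted/relative-entropy sense required by the method in \cite[\S2, Step~2]{Luo2014}) and they share the same initial data. Lemma~\ref{t:unqr3} then gives $\rrho_1\equiv\rrho_2$ on $[0,T)\times\Rbb^3$ and $\tilde w_1^i\equiv\tilde w_2^i$ wherever $\rrho_1=\rrho_2>0$. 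The first equality together with $\Omega_i(t)=\{\bx:\rrho_i(t,\bx)>0\}$ forces $\Omega_1(t)=\Omega_2(t)$, and the second, restricted to this common support, recovers $w^i_1(t,\bx)=w^i_2(t,\bx)$.

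The main obstacle I anticipate is the bookkeeping around the exterior extension: one must verify that the modified $(\rrho_i,\tilde w_i^i)$ genuinely satisfy whatever weak/relative-entropy formulation underlies Lemma~\ref{t:unqr3}, and that the Newtonian potential $\rPhi_i$, which depends only on $\rrho_i$ through \eqref{e:Newpot}, is unaffected by the exterior velocity modification. The degeneracy $\rrho_i\equiv 0$ outside the support and the fact that the relative entropy functional is $\rrho$-weighted make both points essentially automatic, but they should be stated carefully so that no contribution from the exterior leaks into the Grönwall estimate that Lemma~\ref{t:unqr3} rests on.
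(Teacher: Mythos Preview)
Your overall instinct---reduce to the relative entropy estimate of Lemma~\ref{t:unqr3} and close with Gr\"onwall---is correct and is exactly what the paper does. But the route you propose contains two unnecessary detours, one of which is a genuine gap.

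First, the Stein extension and Proposition~\ref{t:bdgl} are \emph{already baked into the hypotheses}: the statement begins ``Under the assumptions of Lemma~\ref{t:unqr3}'', and those assumptions include $w^i_\ell\in W^{1,\infty}([0,T)\times\Rbb^3)$ and that the conservation laws hold on all of $\Rbb^3$. So there is nothing to extend here. (The extension machinery is used later, in Theorem~\ref{t:unithm1}, to manufacture those hypotheses for classical solutions of the diffuse boundary problem; at the level of Lemma~\ref{t:samedt} it is taken for granted.)

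Second, and more importantly, the step where you modify $\tilde w^i_2(0,\cdot)$ outside $\Omega_2(0)$ to match $\tilde w^i_1(0,\cdot)$ is both unnecessary and problematic. It is unnecessary because---as you yourself observe in your last paragraph---the relative entropy
\[
\eta^\star=\frac{1}{\gamma-1}\bigl[\rrho_2^\gamma-\rrho_1^\gamma-\gamma\rrho_1^{\gamma-1}(\rrho_2-\rrho_1)\bigr]+\tfrac12\rrho_2|\mathbf w_2-\mathbf w_1|^2
\]
is density-weighted: on $(\Omega_1(t)\cup\Omega_2(t))^\mathsf{c}$ both densities vanish, so $\eta^\star\equiv 0$ there regardless of what the velocities do. In particular $\int_{\Rbb^3}\eta^\star(0,\bx)\,d\bx=0$ already holds without any modification, because on $\Omega_1(0)=\Omega_2(0)$ the data agree and off it both densities vanish. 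The paper simply plugs this into the inequality of Lemma~\ref{t:unqr3}, restricts the integrals to $\Omega_1(t)\cup\Omega_2(t)$, applies Gr\"onwall to get $\int\eta^\star(t,\cdot)=0$, and reads off $\rrho_1=\rrho_2$, $w^i_1=w^i_2$ from the lower bound \eqref{e:etaest}. Your modification is problematic because altering $\tilde w^i_2$ only on the slice $\{t=0\}$ while keeping it unchanged for $t>0$ will in general destroy the $W^{1,\infty}$ time-regularity that Lemma~\ref{t:unqr3} requires; you would need the modified field to still be a $W^{1,\infty}$ solution on $[0,T)\times\Rbb^3$, and you have not arranged that.

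In short: drop the extension and the modification, and use directly that $\eta^\star$ vanishes wherever both densities do. That is the whole proof.
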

\begin{proof}
	Lemma \ref{t:unqr3} (see Appendix \ref{a:Lpf}) implies
	\begin{align}\label{e:inteta1}
		\int_{\Rbb^3} \eta^\star(t,\bx)d\bx  \leq  \int_{\Rbb^3} \eta^\star(0,\bx)d\bx+ C\sup_{0\leq \tau<T}\bigl(\|\nabla_\bx \mathbf{v}_1(\tau,\cdot)\|_{L^\infty}+Z(\tau)\bigr)\int^t_0\int_{\Rbb^3} \eta^\star(\tau,\bx) d\bx d\tau
	\end{align}
	where $\eta^\star\geq 0$, $Z(\tau)$ are defined in Lemma \ref{t:unqr3}.
	Note $\eta^\star (t,\bx) \equiv 0$ for any $(t,\mathbf{x})\in [0,T)\times (\Omega_1(t)\cup \Omega_2(t))^\mathsf{c}$ and use the initial data
	\begin{equation*}
		\Omega_1(0)=\Omega_2(0), \quad \rrho_1(0,\bx)=\rrho_2(0,\bx) \AND \rw^i_1(0,\bx)=\rw^i_2(0,\bx)
	\end{equation*}
	for $\bx\in \Omega_1(0)=\Omega_2(0)$, then the above domain of integration can be shrunk to $\Omega_1(t)\cup\Omega_2(t)$ and above inequality \eqref{e:inteta1}, with the help of $v^i_\ell \in
	W^{1,\infty}([0,T)\times \Rbb^3, \Rbb^3)$, becomes
	\begin{align*}\label{e:inteta2}
		\int_{\Omega_1(t)\cup\Omega_2(t)} \eta^\star(t,\bx)d\bx \leq & \int_{\Omega_1(0)} \eta^\star(0,\bx)d\bx \notag \\
		&+C\sup_{0\leq \tau<T}(\|\nabla_\bx \mathbf{v}_1(\tau,\cdot)\|_{L^\infty}+Z(\tau))\int^t_0\int_{\Omega_1(\tau))\cup\Omega_2(\tau))} \eta^\star(\tau,\bx) d\bx d\tau \notag  \\
		\leq & C^\prime \int^t_0\int_{\Omega_1(\tau)\cup\Omega_2(\tau))} \eta^\star(\tau,\bx) d\bx d\tau.
	\end{align*}
	Then by the Gronwall's inequality, we arrive at
	\begin{align*}
		\int_{\Omega_1(t)\cup\Omega_2(t)} \eta^\star(t,\bx)d\bx \equiv 0
	\end{align*}
	for $t\in[0,T)$. Since $\eta^\star\geq 0$ (see \eqref{e:etaest} in Lemma \ref{t:unqr3}), then $\eta^\star \equiv 0$ for $(t,\bx)\in[0,T)\times (\Omega_1(t)\cup \Omega_2(t))$, by \eqref{e:etaest} in Lemma \ref{t:unqr3}, we conclude 	
	\begin{equation*} \rrho_1(t,\bx)=\rrho_2(t,\bx) \AND \rw^i_1(t,\bx)=\rw^i_2(t,\bx),
	\end{equation*}
	for $(t,\bx)\in[0,T)\times (\Omega_1(t)\cup \Omega_2(t))$ and further $
	\Omega_1(t)=\Omega_2(t)$ since they are supports of densities $\rrho_1$ and $\rrho_2$, respectively. Then we complete this proof.
\end{proof}

\begin{definition}\label{t:exsl}
	For a solution $(\rrho,\rw^i,  \Phi, \Omega(t))$ to Euler--Poisson system \eqref{e:NEul1}--\eqref{e:NEul3}  on $[0,T)\times \Omega(t)$, suppose $\rrho\in C^1([0,T)\times\Rbb^3)$ and $\rw^i\in C^1([0,T)\times \Omega(t), \Rbb^3)$, and if there exists an extension function $v^i  \in
	W^{1,\infty}([0,T)\times \Rbb^3, \Rbb^3)$, such that $v^i(t,\bx)=\rw^i(t,\bx)$ for $(t,\bx)\in[0,T)\times \Omega(t)$, then we call this solution $(\rrho,\rw^i,  \Phi, \Omega(t))$ \textbf{extendable}.
\end{definition}

Note that the local existence theorem \ref{t:Newext2} gives the local  solutions to Problem \ref{prob} by Calder\'on extension of the data and the symmetric hyperbolic system theory. Thus, it is a proper subclass of regular solutions with $\rrho^{\frac{\gamma-1}{2}}\in C^1([0,T)\times \Rbb^3,\Rbb)$. 
The following uniqueness Theorem \ref{t:unithm} indicates any  solutions (not necessary to be the regular solutions of  Problem \ref{prob}) with the same initial data in the hydrodynamic region $\Omega(t)$ are the same if these solutions are \textit{extendable}.
After giving the uniqueness theorem \ref{t:unithm}, we point out, in the strong uniqueness theorem \ref{t:unithm1}, every regular solution defined by Definition \ref{t:clsl} is extendable, and with the help of the uniqueness theorem \ref{t:unithm}, it follows the regular solution exists and is unique under the data given in the local existence theorem \ref{t:Newext2}.

\begin{theorem}\emph{(The uniqueness theorem)} \label{t:unithm}
	Suppose $1<\gamma\leq 2$, $T>0$, $(\rrho_\ell,\rw^i_\ell,  \Phi_\ell, \Omega_\ell(t))$,
	for $\ell=1,2$, are two \emph{extendable} solutions to the Euler--Poisson system \eqref{e:NEul1}--\eqref{e:NEul3}  on $[0,T)\times \Omega(t)$, that is they are, respectively, two extendable solutions of the system,
	\begin{align}
		\del{0} \rrho_\ell + \mathring{v}^i_\ell \del{i} \rrho_\ell+ \rrho_\ell \del{i} \mathring{v}^i_\ell = & 0  \quad &&\text{in}\quad [0,T)\times \Omega_\ell(t) , \label{e:treul1}\\		
		\rrho_\ell \del{0} \mathring{v}^k_\ell + \rrho_\ell \mathring{v}^i_\ell \del{i} \mathring{v}^k_\ell + \delta^{ik} \del{i}  p_\ell = & -   \rrho_\ell \del{}^k  \Phi_\ell, && \text{in}\quad [0,T)\times \Omega_\ell(t),  \label{e:treul2}
	\end{align}
	where
	\begin{equation*} %\label{e:reul5}
		\Phi_\ell(t,\bx)=   -\frac{1}{4\pi}\int_{\Rbb^3} \frac{\rho_\ell(t,\by)}{|\bx - \by |}d^3 \by, \quad \text{for} \quad (t, \bx) \in  [0,T)\times\Rbb^3 , 	
	\end{equation*}
	and the equation of state \eqref{e:eos1} holds.
	If initial data are the same,
	\begin{equation*}
		\Omega_1(0)=\Omega_2(0), \quad \rrho_1(0,\bx)=\rrho_2(0,\bx) \AND \rw^i_1(0,\bx)=\rw^i_2(0,\bx)
	\end{equation*}
	for $\bx\in \Omega_1(0)=\Omega_2(0)$, then the solutions are the same as well,
	\begin{equation*}
		\Omega_1(t)=\Omega_2(t), \quad \rrho_1(t,\bx)=\rrho_2(t,\bx) \AND \rw^i_1(t,\bx)=\rw^i_2(t,\bx),
	\end{equation*}
	for $(t,\bx)\in[0,T)\times\Omega_1(t)\bigl(=[0,T)\times\Omega_2(t)\bigr)$.
\end{theorem}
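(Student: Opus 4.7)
The plan is to reduce Theorem \ref{t:unithm} directly to the already-proven Lemma \ref{t:samedt} by exploiting the extendability hypothesis, so that the relative entropy machinery of Lemma \ref{t:unqr3} can be applied on all of $\Rbb^3$ rather than on the individual domains $\Omega_\ell(t)$.

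First, I would invoke Definition \ref{t:exsl}: for each $\ell = 1,2$, extendability supplies a vector field $w^i_\ell \in W^{1,\infty}([0,T) \times \Rbb^3, \Rbb^3)$ that coincides with $\mathring{w}^i_\ell$ on $[0,T) \times \Omega_\ell(t)$. Combined with $\rho_\ell \in C^1([0,T) \times \Rbb^3)$, whose spatial support is precompact for each $t$ by Theorem \ref{t:C1bdry} and the flow description in Lemma \ref{t:surfc2}, this produces extended pairs $(\rho_\ell, w^i_\ell)$ defined on all of $\Rbb^3$. The Euler--Poisson equations \eqref{e:NEul1}--\eqref{e:NEul2} hold on the interior $[0,T) \times \Omega_\ell(t)$ by hypothesis; outside $\Omega_\ell(t)$ both $\rho_\ell$ and $\partial_i \rho_\ell$ vanish (the former by definition of $\Omega_\ell(t)$, the latter by $C^1$ regularity, since $\rho_\ell \equiv 0$ on the open exterior), and $\partial_0 \rho_\ell$ vanishes on $\partial\Omega_\ell(t)$ by continuity from the interior equation. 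Thus the extended triples may be treated as solutions on $\Rbb^3$ in the sense required by Lemma \ref{t:unqr3}, independent of the specific choice of extension.

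Second, I would apply Lemma \ref{t:samedt} verbatim to the extended pairs. Since $\rho_1(0,\cdot) = \rho_2(0,\cdot)$ and $w^i_1(0,\cdot) = w^i_2(0,\cdot)$ on the common initial support $\Omega_1(0) = \Omega_2(0)$, and since the relative entropy $\eta^\star$ of Lemma \ref{t:unqr3} is built from the densities and velocities and vanishes identically wherever both $\rho_1$ and $\rho_2$ vanish, its initial spatial integral is zero. The Gronwall closure carried out in the proof of Lemma \ref{t:samedt} then yields $\eta^\star \equiv 0$ on $[0,T) \times (\Omega_1(t) \cup \Omega_2(t))$, and the strict convexity of the underlying entropy forces $\rho_1 \equiv \rho_2$ and $w^i_1 \equiv w^i_2$ on this set. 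Matching the supports of $\rho_1$ and $\rho_2$ returns $\Omega_1(t) = \Omega_2(t)$, and the restriction of the common velocity to this set recovers $\mathring{w}^i_1 = \mathring{w}^i_2$.

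The main technical point to be checked is that the estimate of Lemma \ref{t:unqr3} is insensitive to the particular extensions of the $\mathring{w}^i_\ell$ outside the fluid regions. This is precisely the structural feature of the relative entropy--entropy flux method of \cite{Luo2014}: the comparison functional $\eta^\star$ is supported in $\overline{\Omega_1(t)} \cup \overline{\Omega_2(t)}$, so only the $L^\infty$-norm of $\nabla w^i_1$ (which is finite by extendability) enters the Gronwall constant, while the extensions never couple to the dynamics away from the common hydrodynamic domain. Stein Extension and the Lipschitz property of $\partial([0,T] \times \Omega_\ell(t))$ from Proposition \ref{t:bdgl} ensure that such $W^{1,\infty}$ extensions genuinely exist and have controlled norms. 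Once this compatibility is observed, the reduction to Lemma \ref{t:samedt} is immediate.
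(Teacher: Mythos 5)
Your proposal follows essentially the same route as the paper: invoke extendability to place both solutions on $\Rbb^3$, then apply Lemma \ref{t:samedt} (which in turn runs the relative-entropy estimate of Lemma \ref{t:unqr3}). The one step the paper makes explicit that you pass over is the rewriting of \eqref{e:treul1}--\eqref{e:treul2} into the conservative flux form required by Lemma \ref{t:unqr3} via $\eqref{e:treul1}\times\rw^k_\ell+\eqref{e:treul2}$, together with the check that the quantities $\rho_\ell w^i_\ell$ and $\rho_\ell w^i_\ell w^k_\ell$ extend to $C^1$ functions on $[0,T)\times\Rbb^3$ because $\rho_\ell$ and $\nabla\rho_\ell$ vanish on $\del{}\Omega_\ell(t)$ while $w^i_\ell$ and its first derivatives stay bounded — this is what licenses treating the extended triples as solving the flux-form system on all of $\Rbb^3$, which your phrase ``in the sense required by Lemma \ref{t:unqr3}'' implicitly assumes.
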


\begin{proof}
	Firstly, direct calculations (note \eqref{e:treul1}$\times\rw^k_\ell+$\eqref{e:treul2} yields the following second equation) lead to that for any $(t,\bx)\in[0,T)\times \Omega(t)$ the system \eqref{e:treul1}--\eqref{e:treul2} is equivalent to
	\begin{align*}
		\del{t}\rrho_\ell+ \del{i} (\rrho_\ell \rw^i_\ell) = & 0 \quad &&\text{in}\quad [0,T)\times \Omega_\ell(t) , %\label{e:reul1}
		\\
		\del{t}(\rrho_\ell \rw^k_\ell)+\del{i} (\rrho \rw^i_\ell \rw^k_\ell) +\delta^{ik} \del{i} p_\ell = & - \rrho_\ell\del{ }^k  \Phi_\ell, && \text{in}\quad [0,T)\times \Omega_\ell(t).
	\end{align*}
	Since $(\rrho_\ell,\rw^i_\ell,  \Phi_\ell, \Omega_\ell(t))$,
	for $\ell=1,2$, are extendable (using Definition \ref{t:exsl}), there are extensions $v^i_\ell \in C^1([0,T)\times\overline{\Omega(t)}, \Rbb^3) \cap
	W^{1,\infty}([0,T)\times \Rbb^3, \Rbb^3)$ of $\rw^i$, respectively. Then by direct calculations, we can verify that $\rrho_\ell v^i_\ell \in C^1([0,T)\times\Rbb^3)$, $\rrho v^i_\ell v^k_\ell \in C^1([0,T)\times\Rbb^3)$ (note $\del{\mu}(\rrho_\ell v^i_\ell)|_{\del{}\Omega_\ell(t)}=(\del{\mu}\rrho_\ell v^i_\ell)|_{\del{}\Omega_\ell(t)}+(\rrho_\ell \del{\mu} v^i_\ell)|_{\del{}\Omega_\ell(t)}=0$) and they both solve the Euler--Poisson system given in Lemma \ref{t:unqr3}.  After verifying them, we conclude this theorem with the help of Lemma \ref{t:samedt}.
\end{proof}

Above Theorem \ref{t:unithm} claims the \textit{extendable} solution is unique, the following Theorem \ref{t:unithm1} states with the stronger conditions of the local existence Theorem \ref{t:Newext2}, the regular solutions are all extendable, furthermore, under the conditions of the local existence Theorem \ref{t:Newext2}, the regular solution is unique.
\begin{theorem}[The strong uniqueness theorem]\label{t:unithm1}
	Suppose $1<\gamma\leq 2$, $T>0$, $(\rrho_\ell,\rw^i_\ell,  \Phi_\ell, \Omega_\ell(t))$,
	for $\ell=1,2$, are two regular solutions to Problem \ref{prob}, defined by Definition \ref{t:clsl} on $[0,T)$.
	If the initial domain $\Omega(0)$ is bounded and  satisfies $\del{}\Omega(0)$ is a $C^1$ boundary, and initial data are the same,
	\begin{equation*}
		\Omega_1(0)=\Omega_2(0), \quad \rrho_1(0,\bx)=\rrho_2(0,\bx) \AND \rw^i_1(0,\bx)=\rw^i_2(0,\bx)
	\end{equation*}
	for $\bx\in \Omega_1(0)=\Omega_2(0)$, then the solutions are the same as well,
	\begin{equation*}
		\Omega_1(t)=\Omega_2(t), \quad \rrho_1(t,\bx)=\rrho_2(t,\bx) \AND \rw^i_1(t,\bx)=\rw^i_2(t,\bx),
	\end{equation*}
	for any $T_\star\in(0,T)$ and  $(t,\bx)\in[0,T_\star]\times\Omega_1(t)\bigl(=[0,T_\star]\times\Omega_2(t)\bigr)$.

	Furthermore, under the conditions of the local existence Theorem \ref{t:Newext2}, and  $\del{}\Omega(0)$ is a $C^1$ boundary of $\Omega(0)$, then there is a unique regular solution $(\rrho,\rw^i,\rPhi, \Omega(t))$ in $t\in [0,T_\star]$ given by Theorem \ref{t:Newext2} for every $0<T_\star<T$.
\end{theorem}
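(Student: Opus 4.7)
My plan is to reduce the statement to the previously established Uniqueness Theorem \ref{t:unithm}. That theorem assumes both solutions are \emph{extendable} in the sense of Definition \ref{t:exsl}, i.e.\ that the velocity admits a global $W^{1,\infty}$-extension on $[0,T)\times\Rbb^3$. By Definition \ref{t:clsl} I am only handed a $C^1$-extension $w^i_\ell\in C^1([0,T)\times\overline{\Omega_\ell(t)},\Rbb^3)$ to the closed moving domain, so the entire task reduces to promoting this boundary $C^1$-extension to a Sobolev $W^{1,\infty}$-extension on all of $\Rbb^3$; once this is done, Theorem \ref{t:unithm} applied on $[0,T_\star]$ immediately yields the equalities $\rrho_1=\rrho_2$, $\rw^i_1=\rw^i_2$ and $\Omega_1(t)=\Omega_2(t)$.

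For the promotion I fix $T_\star\in(0,T)$ and view the spacetime slab $[0,T_\star]\times\Omega_\ell(t)\subset\Rbb^4$. The hypothesis $\del{}\Omega(0)\in C^1$ combined with Theorem \ref{t:C1bdry} (preservation of the $C^1$-boundary) gives $\del{}\Omega_\ell(t)\in C^1$ for every $t\in[0,T_\star]$, and Proposition \ref{t:bdgl} then glues the lateral, top and bottom faces into a Lipschitz boundary of the $\Rbb^4$-domain $[0,T_\star]\times\Omega_\ell(t)$. This Lipschitz regularity is precisely the hypothesis needed to invoke the Stein Extension Theorem (see Appendix \ref{s:extentionS}), which produces an extension $\tilde w^i_\ell\in W^{1,\infty}(\Rbb^4,\Rbb^3)$ of $w^i_\ell$; restricting $\tilde w^i_\ell$ to the strip $[0,T_\star]\times\Rbb^3$ supplies the Sobolev extension demanded by Definition \ref{t:exsl}. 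Since $\rrho_\ell\in C^1([0,T)\times\Rbb^3)$ is already globally defined, no extension is required for the density. Thus each solution is extendable on $[0,T_\star]$, Theorem \ref{t:unithm} applies, and since $T_\star<T$ was arbitrary the first assertion follows.

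The second assertion combines the first with the Local Existence Theorem \ref{t:Newext2}: that theorem furnishes a classical solution on some $[0,T)$ under hypothesis $(a)$ or $(b)$, and the uniqueness on each compact subinterval $[0,T_\star]\subset[0,T)$ is supplied by the argument above. The main delicate step I anticipate is the Stein extension: it forces me to work on the \emph{closed} subinterval $[0,T_\star]$ rather than the half-open $[0,T)$, so that $[0,T_\star]\times\Omega_\ell(t)$ is a compact Lipschitz domain in $\Rbb^4$ and Stein's theorem can be applied. The lateral boundary $\del{}\Omega_\ell(t)$ being $C^1$ by itself would not suffice, since Stein's theorem requires the \emph{entire} boundary of the extension domain to be Lipschitz, which is exactly what Proposition \ref{t:bdgl} ensures by gluing the lateral, top and bottom faces together.
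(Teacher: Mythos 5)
Your proposal is correct and follows essentially the same route as the paper: show each classical solution is \emph{extendable} in the sense of Definition \ref{t:exsl} by using $\del{}\Omega(0)\in C^1$, Theorem \ref{t:C1bdry} and Proposition \ref{t:bdgl} to obtain a Lipschitz boundary for the spacetime slab $[0,T_\star]\times\Omega(t)$, then invoke the Stein Extension Theorem to produce a $W^{1,\infty}(\Rbb^4)$ extension of $w^i$, reduce to Theorem \ref{t:unithm}, and finally combine with Theorem \ref{t:Newext2} for existence. You also correctly identify the subtle point that one must pass to the closed subinterval $[0,T_\star]$ so that the domain is compact and $w^i\in C^1([0,T_\star]\times\overline{\Omega(t)})$ automatically lies in $W^{1,\infty}$, which is exactly what the paper does.
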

\begin{remark}
	This uniqueness theorem implies that if we have two extensions for the initial data on $\Omega(0)$. Then for these two different extensions, there would exist two different Makino solutions, that then needed to be restricted to $\Omega(t)$. This restriction then should result in a unique solution. 
\end{remark}
\begin{proof}
	Let us first claim every regular solution  to  Problem \ref{prob}   is extendable. In fact, since $(\rrho,\rw^i,  \Phi, \Omega(t))$ is a regular solution to  Problem \ref{prob}, by Definition \ref{t:clsl}, $\rrho \in C^1([0,T)\times \Rbb^3,\Rbb)$ and $v^i \in C^1([0,T)\times \overline{\Omega(t)},\Rbb^3)$. Due to the compactness of $\overline{\Omega(t)}$, we obtain  $v^i \in W^{1,\infty}([0,T_\star]\times \overline{\Omega(t)},\Rbb^3)$ for any $T_\star\in(0,T)$. Since $\del{}\Omega(0)$ is a $C^1$ boundary, we apply Proposition \ref{t:bdgl} to conclude $\del{}\bigl([0,T_\star]\times  \Omega(t)\bigr)=\mathcal{D}_l \cup \mathcal{D}_b \cup \mathcal{D}_t \subset \Rbb^4$ is of Lipschitz.
	With the help of \textit{Stein extension theorem \ref{t:extentionS}} (see Appendix \S\ref{s:extentionS}) for $(m,p)=(1,\infty)$, there is a Stein extension operator $E:W^{1,\infty}([0,T_\star]\times \overline{\Omega(t)})\rightarrow W^{1,\infty}(\Rbb^4)$ satisfying
$Ev^i(x^\mu)=v^i(x^\mu)$ in $[0,T_\star]\times \overline{\Omega(t)}$ and there is a constant $K>0$ such that $\|Ev^i\|_{W^{1,\infty}(\Rbb^4)}\leq K\|v^i\|_{W^{1,\infty}([0,T_\star]\times \overline{\Omega(t)})}$. Therefore, by Definition \ref{t:exsl}, the regular solution is extendable.
Further,  using the uniqueness theorem \ref{t:unithm} completes the proof of the first part.

	 If assuming conditions of the local existence theorem \ref{t:Newext2} with $C^1$ boundary $\del{}\Omega(0)$ of $\Omega(0)$ for $t\in[0,T)$, then there is at least one Makino solution given by Theorem \ref{t:Newext2} and $\rw^i\in C^0([0,T),H^{s}(\Omega(t)))\cap C^1([0,T),H^{s-1}(\Omega(t)))$. Using the above result, we conclude any two regular solutions are the same. That is, there is a unique regular solution of the system and the unique solution has to be the Makino type solution with the compact data.
	  We complete this proof.
\end{proof}

\begin{remark}	
	We point out \cite[Proposition $2$]{Brauer1998} proved the uniqueness theorem of the \textit{Makino solutions}  instead of all \textit{regular solutions}, since \cite{Brauer1998} presumes two solutions solve Makino's formulations of the Euler--Poisson system and energy estimates lead to they are equal in the lenslike shape region if data are the same.  This can be done only if $\rrho^{\frac{\gamma-1}{2}}(t)\in H^s$ rather than $\rrho^{\gamma-1}(t)\in H^s$ for all regular solutions. Therefore, the uniqueness of \cite{Brauer1998} implies the solution satisfying  $\rrho^{\frac{\gamma-1}{2}}(t)\in H^s$ is unique. In other words, it can not rule out the possibility that there are more than one solutions to solve  Problem \ref{prob} with the same data, one of them is the Makino solution satisfying $\rrho^{\frac{\gamma-1}{2}}(t)\in H^s$ and the other satisfying $\rrho^{\gamma-1}(t)\in H^s$ but $\rrho^{\frac{\gamma-1}{2}}(t)\notin H^s$. However, the above strong uniqueness theorem \ref{t:unithm1}  overcomes this difficulty.
\end{remark}

\section{continuation criteria of  Problem \ref{prob}}\label{s:contpri}
In this section, we introduce the continuation criteria of  Problem \ref{prob}. These continuation criteria theoretically allow us to continue the local solutions to a larger time interval provided the suitably bounded spatial derivatives, but they are not usually used to continue the solutions. In fact, the most crucial usage is if we know the solution blows up at a finite time, then the continuation criteria provide information on possible singularities. These continuation criteria can be viewed as improvements of those in \cite{Brauer1998}. In contrast to those in \cite{Brauer1998}, these \textit{localize the singularities} and answer the conjectures in \cite{Brauer1998} to some extent. However, we have to point out that the continuation criteria in this section only work for several conditions: $(1)$ the Makino type solutions with the $C^1$ domains (i.e., they are regular solutions but with the stronger regularities $\rho^{\frac{\gamma-1}{2}} \in C^1$); $(2)$ $1<\gamma\leq 5/3$.

First, we give a weak version (see \S\ref{s:ecp}) and it will help us extend the regular solutions to several types of  singularities. The basic idea of the proof is to use Makino's formulation of the Euler--Poisson system and energy estimates. However, the standard Gagliardo--Nirenberg--Moser estimates have been replaced by the revised Gagliardo--Nirenberg--Moser estimates on the Lipschitz bounded
domain.

The strong continuation criterion (see \S\ref{s:scp}) helps us reduce the types of interior singularities.  The key tool of the proof for the strong continuation criterion is Ferrari--Shirota--Yanagisawa inequality (a bounded domain  version of Beale--Kato--Majda estimate). This inequality allows us to control the velocity $\|v^i\|_{W^{1,\infty}}$ ``almost only'' by expansion and rotation. In order to use the  Ferrari--Shirota--Yanagisawa (FSY) inequality, there are two difficulties: the first one comes from the boundary regularity, and the second one is since the velocity at the boundary may not be orthogonal to the outer normal of  the boundary, that is, the \textit{second difficulty} is the orthogonal condition of the boundary limiting velocity $v^i\delta_{ij} n^j=0$ is not always true in this theorem.
Our idea to overcome these two difficulties and use the FSY inequality is: we first select a smooth surface (the $C^\infty$-approximation lemma of boundary \ref{t:Cinfapx} ensures this) near the boundary $\del{}\Omega(t)$, then decompose the velocity field into two parts around this $C^\infty$ surface, one of these parts is ``tangent'' to the $C^\infty$ surface on it and the other orthogonal to it, then analyze these two parts respectively.

\subsection{Weak continuation criterion of  Problem \ref{prob}}
\label{s:ecp}

We obtain the following weak continuation criterion using energy estimates on bounded $\Omega(t)$. 
Note since we will use Makino's formulations, the derivations of this section work only for Makino type  solutions  instead of the general regular solutions.
Before proceeding, let us prove the following helpful lemma, a bounded domain version of Lemma $2$ in \cite{Brauer1998}.
\begin{lemma}\label{e:estwv}
Suppose
\begin{equation}\label{e:cod}
	\int^t_0 \|\Theta (s)\|_{\Li(\Omega(s))}ds+\int^t_0  \|\nabla w(s)\|_{\Li(\Omega(s))} ds<\infty
\end{equation}	
for $t\in[0,T^\star)$. Then, if $T^\star<\infty$, for $t\in [0,T^\star)$, we have bounds
	\begin{align*}
			\| w(t)\|_{ \Li(\Omega(t))}  <\infty \AND
	 \|v^i(t)\|_{ L^\infty( \Omega(t))}   <\infty.
	\end{align*}
\end{lemma}
\begin{proof}
	Let us first use the continuation equation to prove $\| w(t)\|_{ \Li(\Omega(t))}  \leq C$.
		Reexpress \eqref{e:NEul1} and \eqref{e:eual1} in terms of Lagrangian description (recall \S \ref{s:lag}), we directly derive
	\begin{align*}
		\del{t}\ln \underline{\rrho}(t,\xi) = -\underline{\Theta}(t,\xi) \quad \text{and} \quad  \del{t}\ln \underline{ w}(t,\xi) =-\frac{\gamma-1}{2} \underline{\Theta}(t,\xi).
	\end{align*}
	Then integrating them along the integral curves of $\mathbf{v}$ yield
	\begin{align}
		\underline{\rrho}(t,\xi)=&\underline{\rrho}(0,\xi) \exp\Bigl(-\int^t_0\underline{\Theta}(s,\xi)ds\Bigr)\label{e:rrhotht1}
		\intertext{and} \underline{ w}(t,\xi)=&\underline{ w}(0,\xi) \exp\Bigl(-\frac{\gamma-1}{2}\int^t_0\underline{\Theta}(s,\xi)ds\Bigr). \label{e:rrhotht2}
	\end{align}
	Then we can estimate its $\Li$ norm,
	\begin{align}\label{e:aest}
		&\|\ln w(t)\|_{\Li(\Omega(t))}=\|\ln\underline{ w}(t)\|_{\Li(\Omega(0))}\leq 	 \|\ln w(0)\|_{\Li(\Omega(0))} + \frac{\gamma-1}{2}\int^t_0\|\underline{\Theta}(s)\|_{\Li(\Omega(0))}ds  \notag  \\
		& \hspace{1cm} \leq 	 \|\ln w(0)\|_{\Li(\Omega(0))} + \frac{\gamma-1}{2}\int^t_0\| \Theta(s)\|_{\Li(\Omega(s))}ds <C
<\infty.
	\end{align}
	for $t\in[0,T^\star)$.
	This, by \eqref{e:aest}, \eqref{e:cod} and the initial data, in turn, leads to
	\begin{equation*}\label{e:alest1}
		\| w(t)\|_{ \Li(\Omega(t))}\leq e^{\|\ln w(t)\|_{\Li(\Omega(t))}}   \leq C <\infty.
	\end{equation*}

Before proceeding, let us first prove the following useful inequalities which will be used frequently, for $s \geq 3$,
\begin{equation}\label{e:nphir1}
	\|\del{j} \Phi\|_{L^\infty(\Omega(t))} \leq C\|\rho\|_{L^\infty(\Omega(t))} \AND \|\del{j} \Phi\|_{H^s(\Omega(t))} \leq C\|\rho\|_{H^{s-1}(\Omega(t))}.
\end{equation}
To prove this, let us first note estimates from \cite[Lemma $1$]{Brauer1998} and  \cite[Lemma $2$]{Makino1986},
\begin{equation}\label{e:nphir0}
	\|\del{j} \Phi\|_{L^\infty(\Rbb^3)} \leq C\|\rho\|_{L^\infty(\Rbb^3)} \AND \|\del{j} \Phi\|_{H^s(\Rbb^3)} \leq C\|\rho\|_{H^{s-1}(\Rbb^3)}.
\end{equation}
Noting $\|\del{j} \Phi\|_{L^\infty(\Omega(t))}\leq \|\del{j} \Phi\|_{L^\infty(\Rbb^3)}$, $\|\del{j} \Phi\|_{H^s(\Omega(t))}\leq \|\del{j} \Phi\|_{H^s(\Rbb^3)}$ and $\|\rho\|_{L^\infty(\Rbb^3)}=\|\rho\|_{L^\infty(\Omega(t))}$, $\|\rho\|_{H^{s-1}(\Rbb^3)}=\|\rho\|_{H^{s-1}(\Omega(t))}$ (since $\rho|_{\Omega^\mathsf{c}(t)}=0$), \eqref{e:nphir0} yields \eqref{e:nphir}. %Then by \eqref{e:rhal} and \eqref{e:rhal2} (i.e., $w=\frac{2\sqrt{K\gamma}}{\gamma-1}\rrho^{\frac{\gamma-1}{2}}$ is a increasing function of $\rho$), we conclude if $\|\rho\|_{L^\infty(\Omega(t))}<C$ then $\|w\|_{L^\infty(\Omega(t))}<C$, i.e., $\|w\|_{L^\infty(\Omega(t))}$ is also bounded.

Next, let us use the balance of momentum to estimate $\|v^i(t)\|_{ L^\infty( \Omega(t))}$. In order to simplify the expressions, let us define a function
\begin{equation*}
	g(s):=\int^s_0\|\Theta(\tau)\|_{\Li(\Omega(\tau))}d\tau
\end{equation*}
which, for later use, satisfies, for $s\in [0,T^\star)$,
\begin{equation}\label{e:gest}
	g(s) \leq C\int^{T^\star}_0\|V_{jk}(t)\|_{L^\infty( \Omega_\epsilon(t))}dt+
	\int^{T^\star}_0 \|\Theta(t)\|_{  L^\infty(\mathring{\Omega}_\epsilon(t))}dt.
\end{equation}
Reexpress the Euler equation  \eqref{e:eual2} (the balance of momentum) in terms of Lagrangian descriptions (recall \S \ref{s:lag}),
\begin{equation*}
	\del{t} \underline{v}^k(t,\xi)  +\frac{\gamma-1}{ 2}\underline{ w}(t,\xi) \delta^{ik}\underline{\del{i} w}(t,\xi)=  - \underline{\del{}^k \Phi}(t,\xi).
\end{equation*}
Integrating it along the integral curves of $\mathbf{v}$ (as the proof of \cite[Lemma $2$]{Brauer1998}), with the help of \eqref{e:nphir}, \eqref{e:rrhotht1}, \eqref{e:rrhotht2}, \eqref{e:aest} and notation \eqref{e:undf} in \S\ref{s:lag}, yields
\begin{align}\label{e:west1}
	&\|v^k(t)\|_{\Li(\Omega(t))} \notag \\
	\leq & \|v^k(0)\|_{\Li(\Omega(0))}+\frac{\gamma-1}{2}\int^t_0 \| w(s)\|_{\Li(\Omega(s))}\|\nabla w(s)\|_{\Li(\Omega(s))} ds+\int^t_0\|\nabla\rPhi(s)\|_{\Li(\Omega(s))}ds \notag  \\
	\leq & \|v^k(0)\|_{\Li(\Omega(0))}+\frac{\gamma-1}{2}\int^t_0 \| w(s)\|_{\Li(\Omega(s))}\|\nabla w(s)\|_{\Li(\Omega(s))} ds+C\int^t_0\|\rrho(s)\|_{\Li(\Omega(s))}ds \notag  \\
	\leq & \|v^k(0)\|_{\Li(\Omega(0))}+\frac{\gamma-1}{2} \| w(0)\|_{\Li(\Omega(0))} \int^t_0  \exp\Bigl(\frac{\gamma-1}{2}g(s)\Bigr)\|\nabla w(s)\|_{\Li(\Omega(s))} ds \notag  \\
	& +C\|\rrho(0)\|_{\Li(\Omega(0))} \int^t_0\exp\bigl( g(s)\bigr)ds\notag  \\
	\leq & \|v^k(0)\|_{\Li(\Omega(0))}+\frac{\gamma-1}{2} \| w(0)\|_{\Li(\Omega(0))} \exp\Bigl(\frac{\gamma-1}{2}g(T^\star)\Bigr)\int^t_0  \|\nabla w(s)\|_{\Li(\Omega(s))} ds \notag  \\
	& +C\|\rrho(0)\|_{\Li(\Omega(0))} T^\star \exp\bigl( g(T^\star)\bigr)  .
\end{align}
for $t\in [0,T^\star)$.
Then, by \eqref{e:cod}, we complete the proof.
\end{proof}

\begin{theorem}[The weak continuation criterion for Makino type solutions]\label{t:conpri1}
Suppose $1<\gamma \leq 5/3$ and let  $(\rrho,\rw^i,\rPhi,\Omega(t))$ be a \textbf{Makino type solution with a $C^1$ domain} (recalling Definition \ref{t:clsl2}) to  Problem \ref{prob} with the property $(w,\mathring{v}^i)\in C^0([0,T^\star),H^{s}(\Omega(t)))\cap C^1([0,T^\star),H^{s-1}(\Omega(t)))$, and $[0,T^\star)$ is the maximal  interval of the existence of  this solution. Then either
	\begin{enumerate}
		\item $T^\star=\infty$;
		\item or $T^\star<\infty$ and
			\begin{equation}\label{e:contpr1}
			\int^{T^\star}_0 \Bigl(\|\nabla \rw^i(\tau)\|_{L^\infty( \Omega(\tau))}  +\|\nabla w\|_{ L^\infty(\Omega(\tau))} \Bigr) d\tau=\infty.
		\end{equation}
	\end{enumerate} 
\end{theorem}
\begin{remark}
	We emphasize that this weak continuation criterion is for the \textit{Makino type solution with a $C^1$ domain} rather than Makino solutions or regular solutions. Therefore, for example,  at some time, if the $C^1$ domain becomes $C^0$ domain or if the velocity does not have a $C^1$ extension to the boundary, the Makino type solution with a $C^1$ domain breaks down.  
\end{remark}
\begin{proof}
	The proof of this theorem relies on an energy estimate by using Makino's formulations \eqref{e:eual1}--\eqref{e:eual2}.  The key to achieving this estimate is to use the  \textit{integration by parts} and the \textit{Reynold’s transport theorem \ref{t:RTT}} which helps us commute the time derivative and the integration with the changing domain.
	
	Firstly, according to Makino's formulations \eqref{e:eual1}--\eqref{e:eual2}, we reexpress the Euler--Poisson system \eqref{e:NEul1}--\eqref{e:NEul3} of  Problem \ref{prob} in $\Omega(t)$ and rewriting them into the matrix form yields, for $(t,\bx)\in[0,T)\times\Omega(t)$,
	\begin{align}\label{e:hypeq}
 \del{0}\mathcal{U}+A^i\del{i}\mathcal{U}=\mathcal{F}
	\end{align}
	where
	\begin{gather}      \mathcal{U}:=(w,\rw^k)^T,\quad \mathcal{F}:=(0,-\del{j}\rPhi)^T \label{e:F}
	\intertext{and}
	A^i=A^i(\mathcal{U}):=\p{\rw^i & \frac{\gamma-1}{2}w\delta^i_k \\
		\frac{\gamma-1}{2}w\delta^i_j & \rw^i\delta_{kj}}.\label{e:A}
	\end{gather}

	Secondly, let us derive a local energy estimate on $\Omega(t)$ by using the local calculus inequalities in Appendix \ref{s:calculus} (as the standard procedure, we, omit the details,  first derive the estimate by assuming all the variables are smooth and then using the approximation theorems of Sobolev spaces in \cite[\S $5.3$]{Evans2010} to conclude the derived inequality holds for Sobolev spaces as well). First acting on both sides of \eqref{e:hypeq} by $ \del{}^\beta $ (for $|\beta|\leq s$ and $s=3$ or $4$) yields
	\begin{align}\label{e:hodhyp}
 \del{0}\mathcal{U}^\beta+A^i\del{i}\mathcal{U}^\beta=\mathcal{G}
	\end{align}
	where
	\begin{align*}
	\mathcal{U}^\beta:=\del{}^\beta \mathcal{U}  \AND  \mathcal{G}:=- [\del{}^\beta, A^i]\del{i}\mathcal{U}+  \del{}^\beta  \mathcal{F} .
	\end{align*}
	Let us define the local energy,
	\begin{equation}\label{e:engy}
	\|\del{}^\beta\mathcal{U}\|_{L^2(\Omega(t))}^2=\la\mathcal{U}^\beta,  \mathcal{U}^\beta\ra:=\int_{\Omega(t)}(\mathcal{U}^\beta)^T   \mathcal{U}^\beta d^3\bx,
	\end{equation}
and calculate an important boundary term for later use. Note
\begin{align*}
	\rw^i \mathrm{Id}-A^i=\p{0 & -\frac{\gamma-1}{2}w \delta^i_k \\ -\frac{\gamma-1}{2}w \delta^i_j & 0 }.
\end{align*}
Hence, $(\mathcal{U}^\beta)^T(\rw^i  \mathrm{Id} -A^i )\mathcal{U}^\beta=-(\gamma-1)w\del{}^\beta \rw^i \del{}^\beta w$.
Then since $v^i$ is the extension of $\rw^i$ to the boundary $\del{}\Omega(t)$ (note the extension exists due to Definition \ref{t:clsl}.$(1)$ and Definition \ref{t:clsl2}, otherwise, if such extension does not exist, then the solution has already ceased to be a Makino solution with a $C^1$ domain), let us calculate, by the divergence theorem and $ w|_{\del{}\Omega(t)}\equiv 0$, the boundary term vanishes,
\begin{align}\label{e:a0-ai}
	&\int_{\Omega(t)}\del{i}\Bigl[(\mathcal{U}^\beta)^T(\rw^i  \mathrm{Id} -A^i )\mathcal{U}^\beta\Bigr] d^3\bx=-(\gamma-1)\int_{\Omega(t)}\del{i}\bigl( w\del{}^\beta \rw^i \del{}^\beta w\bigr) d^3\bx \notag  \\
	&=-(\gamma-1)\int_{\Omega(t)}\del{i}\bigl( w\del{}^\beta v^i \del{}^\beta w\bigr) d^3\bx=-(\gamma-1)\int_{\del{}\Omega(t)}   w\nu_i\del{}^\beta v^i \del{}^\beta w  d^3\bx=0
\end{align}
where $\nu_i:=\delta_{ij}\nu^j$ and $\nu^j$ is the unit outward-pointing normal to $\del{}\Omega(t)$.

	Then, noting the identity \eqref{e:a0-ai} at the boundary $\del{}\Omega(t)$, differentiating $\|\del{}^\beta\mathcal{U}\|_{L^2(\Omega(t))}^2$ with respect to $t$, inserting \eqref{e:hodhyp} in the followings and using integration by parts and Reynold’s transport theorem \ref{t:RTT} leads to, for $t\in[0,T^\star)$,
	\begin{align}\label{e:Eg1}
	\del{t}\|\del{}^\beta\mathcal{U}\|_{L^2(\Omega(t))}^2\overset{\eqref{e:engy}}{=} & \del{t}\int_{\Omega(t)}(\mathcal{U}^\beta)^T   \mathcal{U}^\beta d^3\bx \notag  \\ \overset{\text{Reynold's transport theorem}}{=} & \int_{\Omega(t)}\del{i}\bigl[(\mathcal{U}^\beta)^T   \mathcal{U}^\beta \rw^i\bigr] d^3 \bx+2\la\mathcal{U}^\beta,   \del{t}\mathcal{U}^\beta\ra  \notag  \\
	\overset{\eqref{e:hodhyp}}{=} & \int_{\Omega(t)}\del{i}\bigl[(\mathcal{U}^\beta)^T   \mathcal{U}^\beta \rw^i\bigr] d^3 \bx  -2\la\mathcal{U}^\beta, A^i\del{i}\mathcal{U}^\beta \ra +2\la\mathcal{U}^\beta, \mathcal{G}\ra   \notag  \\
\overset{\text{the integration by parts}}{=} & \int_{\Omega(t)}\del{i}\Bigl[(\mathcal{U}^\beta)^T(\rw^i  \mathrm{Id} -A^i )\mathcal{U}^\beta\Bigr] d^3\bx+ \la\mathcal{U}^\beta, (\del{i}A^i) \mathcal{U}^\beta\ra   +2\la\mathcal{U}^\beta, \mathcal{G}\ra   \notag  \\
\overset{\eqref{e:a0-ai}}{=} &  \la\mathcal{U}^\beta, (\del{i}A^i) \mathcal{U}^\beta\ra   +2\la\mathcal{U}^\beta, \mathcal{G}\ra .
	\end{align}

Before proceeding, by noting $1<\gamma \leq 5/3$ (i.e., $\rho$ is $C^3$ in $w$), with the help of \eqref{e:nphir1} and Proposition \ref{t:compoest}, we obtain
\begin{equation}\label{e:nphir}
	\|\del{j} \Phi\|_{H^s(\Omega(t))} \leq C\|\rho\|_{H^{s-1}(\Omega(t))}\leq C(\|D_w\rho\|_{C^{s-2}})(1+\|w\|^{s-2}_{L^\infty(\Omega(t))})\|w \|_{H^{s-1}(\Omega(t))}.
\end{equation}

	Standard procedures of the hyperbolic system (see, for example, \cite{Majda2012,Taylor2010}, etc.), with the help of the revised Gagliardo–Nirenberg–Moser estimates on the Lipschitz bounded domain, i.e., Proposition \ref{t:prodn}, \ref{t:commuest} and \ref{t:compoest} and Corollary \ref{t:prodn2} (see Appendix \ref{s:calculus}), and taking \eqref{e:F} and \eqref{e:A} into account, leads to
	\begin{align}
	&(1)~ \la\mathcal{U}^\beta, (\del{i}A^i) \mathcal{U}^\beta\ra  \leq     C\|\del{i}A^i\|_{\Li(\Omega(t))}\|\mathcal{U}\|^2_{H^s(\Omega(t))} \leq C\|\nabla\mathcal{U}\|_{\Li(\Omega(t))}\|\mathcal{U}\|^2_{H^s(\Omega(t))}, \label{e:Eg2} \\
	&(2)~ \la\mathcal{U}^\beta, - [\del{}^\beta, A^i]\del{i}\mathcal{U} \ra \notag  \\
	&\hspace{1cm} \leq  C\|\mathcal{U}\|_{H^s(\Omega(t))}\bigl(\|A^i\|_{H^s}\|\del{i}\mathcal{U}\|_{\Li(\Omega(t))}+\|\del{j}A^i\|_{\Li(\Omega(t))}\|\mathcal{U}\|_{H^{s}(\Omega(t))}\bigr) \notag  \\
	&\hspace{1cm} \leq  C(\|\mathcal{U}\|_{\Li(\Omega(t))})\|\nabla\mathcal{U}\|_{\Li(\Omega(t))} \|\mathcal{U}\|^2_{H^s(\Omega(t))}   \label{e:Eg3}   \\
	\intertext{\textit{and noting $1<\gamma \leq 5/3$} (i.e., $\rho$ is $C^3$ in $w$) with the help of \eqref{e:nphir}, }
	&(3)~ \la\mathcal{U}^\beta,  \del{}^\beta  \mathcal{F} \ra
	\leq    \|\mathcal{U}^\beta\|_{L^2(\Omega(t))} \| \del{j} \Phi\|_{H^s(\Omega(t))}\notag  \\
	&\hspace{1cm} \leq  C(\|D_w\rho\|_{C^{s-2}})(1+\|w\|^{s-2}_{L^\infty(\Omega(t))})  \|\mathcal{U}\|_{H^s(\Omega(t))} \|w \|_{H^{s-1}(\Omega(t))}\leq C(\| w\|_{\Li(\Omega(t))}) \|\mathcal{U}\|_{H^s(\Omega(t))}^2. \label{e:Eg4}
	\end{align}
	where we simply denote $C(  \|\mathcal{U}\|_{\Li(\Omega(t))}):= C(\|D_\mathcal{U} A^i\|_{C^{s-1}})(1+\| \mathcal{U}\|^{s-1}_{\Li(\Omega(t))})$ and $ C(\| w\|_{\Li(\Omega(t))}):=C(\|D_w\rho\|_{C^{s-2}})(1+\|w\|^{s-2}_{L^\infty(\Omega(t))}) $ to emphasize the constant depending on $  \|\mathcal{U}\|_{\Li(\Omega(t))}$ and $  \|\rho\|_{\Li(\Omega(t))}$, respectively.
	Adding up \eqref{e:Eg1} for all $| w|\leq s$, with the help of \eqref{e:Eg2}--\eqref{e:Eg4}, leads to, for $t\in[0,T^\star)$,
	\begin{equation*}
	\del{t}\|\mathcal{U}\|^2_{H^s(\Omega(t))}\leq C \bigl[C(\|\mathcal{U}\|_{\Li(\Omega(t))})\|\nabla\mathcal{U}\|_{\Li(\Omega(t))}+C(\| w\|_{\Li(\Omega(t))}) \bigr] \|\mathcal{U}\|^2_{H^s(\Omega(t))}  .
	\end{equation*}
	Then Gronwall’s inequality implies for all $t\in[0,T^\star)$,
	\begin{align}\label{e:aprest0}
	\|\mathcal{U}(t)\|_{H^s(\Omega(t))}\leq
	\|\mathcal{U}(0)\|_{H^s(\Omega(0))} \exp \int^t_0\bigl[C(\|\mathcal{U}\|_{\Li(\Omega(t))})\|\nabla\mathcal{U}(s)\|_{\Li(\Omega(s))}+C(\| w(s)\|_{\Li(\Omega(s))}) \bigr] ds   .
	\end{align}

	Then either $T^\star=\infty$ or if $T^\star<\infty$ let us prove \eqref{e:contpr1} by contradictions, i.e., otherwise, we assume
		\begin{equation*}
		\int^{T^\star}_0 \Bigl(\|\nabla \rw^i(\tau)\|_{L^\infty( \Omega(\tau))}  +\|\nabla w\|_{ L^\infty(\Omega(\tau))} \Bigr) d\tau<\infty.
	\end{equation*}
	Using \eqref{e:aprest0} and Lemma \ref{e:estwv}, with the help of \eqref{e:WThOm0a} and \eqref{e:theta},
	 we conclude that for all $t\in[0,T^\star)$, there is an estimate $
	\|\mathcal{U}(t)\|_{H^s(\Omega(t))}<\infty$. 
	It means we can continue the solution to a larger interval $[0,T^\prime)$ for $T^\prime>T^\star$ by the standard continuation criterion (for example, using the similar standard arguments of the proof of \cite[Theorem $2.2$]{Majda2012}). In specific, since $
	\|\mathcal{U}(t)\|_{H^s(\Omega(t))}<\infty$ for all $t\in[0,T^\star)$, we can use it to obtain $
	\|\mathcal{U}|_{t=T^\star-\epsilon}\|_{H^s(\Omega(T^\star-\epsilon))}<\infty$ as the initial data at time $T^\star-\epsilon$ for any $\epsilon>0$ to continue this solution beyond $T^\star$ to $T^\prime$, and this continuation relies on the local existence theorem \ref{t:Newext2} (or one can use the similar argument by Makino's extensions and restrictions of the proof of Theorem \ref{t:Newext2} with the help of Theorem \ref{t:C1bdry} to continue the solution).  
	It contradicts the fact that $T^\star$ is the maximal time of existence. We then complete the proof.
\end{proof}

\subsection{The strong continuation criterion of  Problem \ref{prob}}\label{s:scp}

Inspired by the ideas of \cite{Brauer1998}, we improve the above continuation criterion to decrease the types of singularities.
Firstly, Lemma \ref{t:FSY2} (resemble \cite[Corollary $1$]{Ferrari1993}) is a useful tool for proving the strong version of the continuation criterion and can be derived directly from the \textit{Ferrari--Shirota--Yanagisawa inequality} of Theorem \ref{t:FSY} (see Appendix \ref{s:FEY}).

\begin{lemma}\label{t:FSY2}
	Suppose for every $t\in[0,T)$, $\Omega(t)\subset \Rbb^3$ is a bounded, simply connected domain of class $C^{2,\beta}$, $\beta\in(0,1)$. If $\mathbf{v}(t,\cdot):=(v^i(t,\cdot))\in H^3(\Omega(t),\Rbb^3)$ is a vector field and $v^i\delta_{ij} n^j=0$ on $\del{}\Omega(t)$ and $n^j$ is the unit outward normal at $\bx\in\del{}\Omega(t)$, the rotation $\Omega_{jk} $ and expansion $\Theta $ are defined by \eqref{e:WThOm0b} and \eqref{e:theta}, then for $t\in[0,T)$,
	\begin{equation*}
	\|v^i(t)\|_{W^{1,\infty}(\Omega(t))} \leq C\bigl[\bigl(1+\ln^{+}\|v^i(t)\|_{H^3(\Omega(t))} \bigr)(\|\Theta(t)\|_{\Li(\Omega(t))}+\|\Omega_{jk}(t)\|_{\Li(\Omega(t))})+1\bigr]
	\end{equation*}
	where
	\begin{align*}
	\ln^{+} a:=\begin{cases}
	\ln a, \quad \text{if} \quad a\geq 1\\
	0, \quad \text{otherwise}
	\end{cases}.
	\end{align*}
\end{lemma}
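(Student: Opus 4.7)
The plan is to derive this lemma as a direct corollary of the Ferrari--Shirota--Yanagisawa (FSY) inequality in Theorem \ref{t:FSY}, so the task reduces to unpacking how the hypotheses and conclusion of the FSY inequality translate into the notation of the diffuse boundary problem. First, I would observe that FSY, as stated in the appendix, should give for any simply connected bounded domain $\Omega$ of class $C^{2,\beta}$ and any $\mathbf{w}\in H^3(\Omega,\mathbb{R}^3)$ satisfying the no-penetration condition $w^i\delta_{ij}n^j=0$ on $\partial\Omega$, a logarithmic estimate of the form
\begin{equation*}
  \|\mathbf{w}\|_{W^{1,\infty}(\Omega)} \leq C\Bigl[1+\bigl(\|\operatorname{div}\mathbf{w}\|_{L^\infty(\Omega)}+\|\operatorname{curl}\mathbf{w}\|_{L^\infty(\Omega)}\bigr)\bigl(1+\ln^+\|\mathbf{w}\|_{H^3(\Omega)}\bigr)+\|\mathbf{w}\|_{L^2(\Omega)}\Bigr].
\end{equation*}
The hypotheses $w^i\delta_{ij}n^j=0$ and $\partial\Omega(t)\in C^{2,\beta}$ are exactly what the lemma assumes, so FSY applies pointwise in $t\in[0,T)$.

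Next I would translate divergence and curl into the notation of \S\ref{s:defs}. From \eqref{e:theta} we have $\operatorname{div}\mathbf{w}=\partial_iw^i=\Theta$, so $\|\operatorname{div}\mathbf{w}(t)\|_{L^\infty(\Omega(t))}=\|\Theta(t)\|_{L^\infty(\Omega(t))}$. For the curl, I would note that $\Omega_{jk}=\tfrac12(\partial_kw_j-\partial_jw_k)$ is precisely the antisymmetric part of $\nabla\mathbf{w}$, so $(\operatorname{curl}\mathbf{w})^l=-\epsilon^{ljk}\Omega_{jk}$ up to a numerical factor, giving $\|\operatorname{curl}\mathbf{w}(t)\|_{L^\infty}\leq C\|\Omega_{jk}(t)\|_{L^\infty}$ where the norm $|\Omega_{jk}|$ is the Frobenius-type norm implicitly used throughout.

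Finally, I would absorb the residual lower-order term $\|\mathbf{w}\|_{L^2(\Omega)}$. One clean way is to bound it by $\|\mathbf{w}\|_{H^3(\Omega)}$ and then note that $\|\mathbf{w}\|_{L^2(\Omega)}\leq C(1+\ln^+\|\mathbf{w}\|_{H^3(\Omega)})\cdot(\|\Theta\|_{L^\infty}+\|\Omega_{jk}\|_{L^\infty})+C$ whenever the right-hand side dominates; if instead $\|\mathbf{w}\|_{L^2}$ is small this is trivial. A more honest route is simply to accept that the FSY inequality in the form stated in the appendix already combines the $L^2$ term into the ``$+1$'' on the right-hand side (which is standard in BKM-type statements on bounded domains via Poincar\'e-type control), so that the additive constant $+1$ in the conclusion of Lemma \ref{t:FSY2} subsumes it. Combining the three ingredients yields the stated inequality.

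The main obstacle I anticipate is not analytic but notational: verifying that the antisymmetric tensor $\Omega_{jk}$ used here (with the sign convention from \eqref{e:WThOm0b}) carries the same $L^\infty$ information as the curl vector in whichever precise formulation FSY is stated in the appendix, and ensuring the boundary class $C^{2,\beta}$ regularity required by FSY is indeed available for $\Omega(t)$ in subsequent applications (this is why the $C^\infty$-approximation Lemma \ref{t:Cinfapx} is invoked later). Beyond that identification, everything in the derivation is bookkeeping.
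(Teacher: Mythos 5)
Your plan coincides with the paper's proof: both reduce Lemma~\ref{t:FSY2} to a one-shot application of the Ferrari--Shirota--Yanagisawa inequality by setting $v=(\mathbf{w},0)$, identifying $\theta=\nabla\cdot\mathbf{w}=\Theta$, and noting that the vorticity $\omega=\nabla\times\mathbf{w}$ carries the same $L^\infty$ content as the antisymmetric tensor $\Omega_{jk}$ up to a universal constant. The one thing to correct is your statement of FSY itself, which does not match Theorem~\ref{t:FSY} as recorded in Appendix~\ref{s:FEY}: the estimate actually proved there is
\begin{equation*}
\|v\|_{W^{1,\infty}(\Omega)}\leq C\Bigl(1+\ln^{+}\frac{\|\psi\|_{H^2(\Omega)}}{\|\psi\|_{L^\infty(\Omega)}}\Bigr)\|\psi\|_{L^\infty(\Omega)},\qquad \psi=(\omega,\theta),
\end{equation*}
with no additive $\|\mathbf{w}\|_{L^2}$ term and no standalone ``$+1$''. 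Accordingly, your account of where the ``$+1$'' in Lemma~\ref{t:FSY2} comes from is misattributed. The correct bookkeeping: bound $\|\psi\|_{H^2}\leq C\|\mathbf{w}\|_{H^3}$ (since $\psi$ is built from first derivatives of $\mathbf{w}$), write $b:=\|\psi\|_{L^\infty}\sim\|\Theta\|_{L^\infty}+\|\Omega_{jk}\|_{L^\infty}$, and split into cases: when $b\geq 1$ one has $\ln^{+}(\|\psi\|_{H^2}/b)\leq\ln^{+}\|\psi\|_{H^2}$ directly, while when $b<1$ the extra term $b\,\ln(1/b)$ is bounded by $1/e$, and it is this elementary $x\ln(1/x)\leq 1/e$ estimate --- not a Poincar\'e control of $\|\mathbf{w}\|_{L^2}$ --- that produces the additive constant. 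Since the paper's proof is a single sentence and does not spell out this reconciliation either, you are not missing any idea the paper supplies; just take care to quote the FSY theorem in the ratio form in which it is actually stated and proved in \cite{Ferrari1993}.
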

\begin{proof}
	Note the relations between rotation tensor $\Omega_{ij}$ and the vorticity $\omega:=(\omega_1,\omega_2,\omega_3)=\nabla\times \mathbf{v}=(\del{2}v^3-\del{3}v^2,\del{1}v^3-\del{3}v^1,\del{1}v^2-\del{2}v^1)$, i.e.,
	\begin{align*}
	(\Omega_{ij})=\frac{1}{2} \p{0 & -\omega_3 & -\omega_2\\
		\omega_3 & 0 & -\omega_1  \\
		\omega_2 & \omega_1 & 0},
	\end{align*}
	and $\theta:=\nabla\cdot \mathbf{v}=\Theta$.
	Let $v:=(\mathbf{v}, 0)$ ($v$ is defined as Theorem \ref{t:FSY}, Appendix \ref{s:FEY}), then the requirements of Theorem \ref{t:FSY} are verified and thus we obtain this lemma.
\end{proof}

The following theorem \ref{t:conpri2} gives the continuation criterion a more meticulous portrayal, providing a better blowup result. Recalling the notations of sets in \S\ref{s:sets} and the definition \eqref{e:omgep} of $\Omega_\epsilon$, let us state this theorem.

\begin{theorem}[The strong continuation criterion for Makino type solutions]\label{t:conpri2}
	Suppose $1<\gamma \leq 5/3$, $\Omega(0)\subset \Rbb^3$ is a bounded, precompact and simply connected domain, and let  $(\rrho,\rw^i,\rPhi,\Omega(t))$ be a Makino type solution with a $C^1$ domain (recalling Definition \ref{t:clsl2}) to  Problem \ref{prob} with the property $(w,\mathring{v}^i)\in C^0([0,T^\star),H^{s}(\Omega(t)))\cap C^1([0,T^\star),H^{s-1}(\Omega(t)))$, and $[0,T^\star)$ is the maximal  interval of the existence of  this solution. Then either
	\begin{enumerate}
		\item $T^\star=\infty$;
		\item or $T^\star<\infty$ and  there is a small constant $\epsilon>0$ such that
	\begin{align}\label{e:contpr2}
	&\int^{T^\star}_0 \Bigl(\|V_{jk}(s)\|_{L^\infty( \Omega_\epsilon(s))}+\|\Theta(s)\|_{L^\infty( \mathring{\Omega}_\epsilon(s) )}\notag  \\
	&\hspace{1.5cm} +\|\Omega_{jk}(s)\|_{ L^\infty( \mathring{\Omega}_\epsilon(s) )}+\|\nabla  w(s)\|_{ L^\infty(\Omega(s))} \Bigr) ds=\infty.
\end{align}
where $\mathring{\Omega}_\epsilon(s):=\Omega(s)\setminus \Omega_\epsilon(s)$.
	\end{enumerate}
\begin{comment}

	Suppose  $\Omega(0)\subset \Rbb^3$ is a bounded, precompact and simply connected domain, $\del{}\Omega(t) \in C^2$, the initial data satisfy requirements of Theorem \ref{t:Newext2} with $1<\gamma \leq 5/3$, $T>0$ is some given time. If a regular solution $(\rrho,\rw^i,\rPhi,\Omega(t))$ to  Problem \ref{prob}  exists on $t\in[0,T_\star)$ for any $T_\star\in(0,T]$, and if there is a constant $\epsilon>0$ such that an estimate holds
	\begin{align}\label{e:contpr2}
	&\int^{T_\star}_0 \Bigl(\|V_{jk}(s)\|_{L^\infty( \Omega_\epsilon(s))}+\|\Theta(s)\|_{L^\infty( \mathring{\Omega}_\epsilon(s) )}\notag  \\
	&\hspace{1.5cm} +\|\Omega_{jk}(s)\|_{ L^\infty( \mathring{\Omega}_\epsilon(s) )}+\|\nabla  w(s)\|_{ L^\infty(\Omega(s))} \Bigr) ds<\infty.
	\end{align}
where $\mathring{\Omega}_\epsilon(s):=\Omega(s)\setminus \Omega_\epsilon(s)$.
	Then there is a constant $T^\star>T>0$, such that the regular solution $(\rrho,\rw^i,\rPhi,\Omega(t))$ exists on $t\in[0,T^\star)$.
	
\end{comment}
\end{theorem}
\begin{remark}
	We point out that  the generalization of Ferrari--Shirota--Yanagisawa inequality  in \cite{Ogawa2003,Taniuchi2003} can further improve this strong continuation criterion. 	
\end{remark}
\begin{remark}
On the one hand, we point out this strong continuation criterion improves the one in \cite{Brauer1998} to make sure it can localize the singularities to the compact support of the density $\Omega(t)$. This localizing type of the continuation criterion has been conjectured in \cite[\S VII]{Brauer1998}. However, our strong continuation criterion, on the other hand, still has some \textit{defects} which are not as good as the conjecture in \cite[\S VII]{Brauer1998}. The defects mainly come from the near boundary behaviors. Although it localizes the singularities, but near the boundary, we can not rule out the shear $\int^{T^\star}_0\|\Xi_{jk}(s)\|_{L^\infty( \Omega_\epsilon(s))}ds=\infty$ where $\Xi_{jk}:=\Theta_{jk}-\frac{1}{3}\Theta \delta_{jk}$. In addition, this continuation criterion  requires $1<\gamma\leq 5/3$. For the general regular solutions, we do not consider them in this article and leave them for the near future.
	Thus, this theorem partially answers the conjectures in \cite[\S VII]{Brauer1998} to some extent.
\end{remark}
\begin{proof}
	Let us prove, under these assumptions, if $T^\star<\infty$, then there exists a small constant $\epsilon>0$,  \eqref{e:contpr2} holds. To do so, we use proof by contradictions. Let us assume for any small constant $\epsilon>0$,
		\begin{align}\label{e:contpr3}
		&\int^{T^\star}_0 \Bigl(\|V_{jk}(s)\|_{L^\infty( \Omega_\epsilon(s))}+\|\Theta(s)\|_{L^\infty( \mathring{\Omega}_\epsilon(s) )} +\|\Omega_{jk}(s)\|_{ L^\infty( \mathring{\Omega}_\epsilon(s) )}+\|\nabla  w(s)\|_{ L^\infty(\Omega(s))} \Bigr) ds<\infty.
	\end{align}
From this inequality, using Lemma \ref{e:estwv}, we first have bounds
\begin{align}\label{e:bdwv0}
	\| w(t)\|_{ \Li(\Omega(t))}  <\infty \AND
	\|v^i(t)\|_{ L^\infty( \Omega(t))}   <\infty.
\end{align}
	
	According to the estimate \eqref{e:aprest0} in the proof of Theorem \ref{t:conpri1}, a direct idea of this proof is to verify that $ \|V_{jk}(t)\|_{  L^\infty( \Omega(t))} $ is controlled by
		\begin{equation}\label{e:int1a}
		 \|\nabla  w(t)\|_{  L^\infty(\Omega(t))} , \quad \|V_{jk}(t)\|_{L^\infty( \Omega_\epsilon(t))}, \quad   \|\Theta(t)\|_{  L^\infty(\mathring{\Omega}_\epsilon(t))} , \AND \|\Omega_{jk}(t)\|_{ L^\infty(\mathring{\Omega}_\epsilon(t))}    %\label{e:int2}
		\end{equation}
	for small $\epsilon>0$. However, in fact, besides \eqref{e:int1a}, Lemma \ref{t:FSY2} implies the estimate of $\|V_{jk}(t)\|_{L^\infty(\Omega(t))}$ also slightly depends on the $H^s$ norm of the unknowns $\|\mathcal{U}\|_{H^s(\Omega(t))}$. An approach from \cite{Beale1984} (it is widely used, for example, in \cite{Brauer1998,Ferrari1993}, etc.) indicates Lemma \ref{t:FSY2} along with the energy estimate \eqref{e:aprest0} and the Gronwall inequality (see Theorem \ref{e:grnwl}) will help us derive an improved a priori estimate on $\|\mathcal{U}\|_{H^s(\Omega(t))}$.  %Then a continuation argument as the last paragraph of the proof of Theorem \ref{t:conpri1} can be used to conclude this proof.
%	For later use, let us start with a useful estimate that $\int^{T_\star}_0 \|v^i(t)\|_{ L^\infty( \Omega(t))} dt$ is controlled by
%	\begin{equation*}
%		\int^{T_\star}_0\|V_{jk}(t)\|_{L^\infty( \Omega_\epsilon(t))}dt,		\quad		\int^{T_\star}_0 \|\Theta(t)\|_{  L^\infty(\mathring{\Omega}_\epsilon(t))}dt, \AND \int^{T_\star}_0\|\nabla  w(t)\|_{  L^\infty(\Omega(t))} dt.
%	\end{equation*}

Let us control $\int^{T^\star}_0 \|V_{jk}(t)\|_{  L^\infty( \Omega(t))} dt$.
	The idea of this proof is under the situation of Theorem \ref{t:conpri1}, we attempt to use Lemma \ref{t:FSY2} to control $\|v^i\|_{W^{1,\infty}(\Omega(t))}$ in terms of $\|\Theta\|_{\Li(\Omega(t))}$ and $\|\Omega_{jk}\|_{\Li(\Omega(t))}$. However, there are \textit{two difficulties} obstructing this idea.
	
	In order to use Lemma \ref{t:FSY2}, recalling that Lemma \ref{t:FSY2} requires $\Omega(t)$ is a bounded, simply connected domain of class $C^{2,\beta}$ for $\beta \in(0,1)$, the \textit{first difficulty} is the boundary in this theorem is of $C^2$ class instead of $C^{2,\beta}$ as Lemma \ref{t:FSY2} requires. This \textit{lower regularity of the boundary} leads to the first difficulty; On the other hand,  recalling that Lemma \ref{t:FSY2} requires $\mathbf{v}(t,\cdot):=(v^i(t,\cdot))\in H^3(\Omega(t),\Rbb^3)$ is a vector field and $v^i\delta_{ij} n^j=0$ on $\del{}\Omega(t)$ and $n^j$ is the unit outward normal at $\bx\in\del{}\Omega(t)$, but in this theorem, the velocity $v^i$ may be not orthogonal to the unit outward normal at $\bx\in\del{}\Omega(t)$. Thus the \textit{second difficulty} is the orthogonal condition of the boundary limiting velocity $v^i\delta_{ij} n^j=0$ is not always true in this theorem. It is clear both of these difficulties come from the  boundary.
	
Our idea to overcome these two difficulties and use the FSY inequality is: we first select a smooth surface (the $C^\infty$-approximation lemma of boundary \ref{t:Cinfapx} ensures this) near the boundary $\del{}\Omega(t)$, then decompose the velocity field into two parts around this $C^\infty$ surface, one of these parts is ``tangent'' to the $C^\infty$ surface on it and the other orthogonal to it, then analyze these two parts respectively. This decomposition of the velocity field will overcome the second difficulty.

	First using the $C^\infty$-approximation lemma of the boundary (see Lemma \ref{t:Cinfapx} in  Appendix \S \ref{s:sard}), for every time $t\in [0,T^\star)$ and for the constant $\epsilon>0$, there is an open domain $\mathcal{D}(t)$ with $C^\infty$ boundary, such that $\mathring{\Omega}_\epsilon(t) \subset \mathcal{D}(t)\subset \Omega(t)$ (see the left figure in Figure \ref{fig:1}). Then by the extension theorems of the outward unit normal (see, for example, Proposition \ref{t:normal1} or Theorem \ref{t:normal2}, see Appendix \ref{s:normal}), since $\mathcal{D}(t)$ is a $C^\infty$ domain, then there exists an open  neighborhood $\mathcal{N}(t)$ of the boundary $ \del{}\mathcal{D}(t)$ such that $\partial\mathcal{D}\subset \mathcal{N}(t)\subset \Omega_\epsilon(t)$, and exists a unit vector field $\mathbf{n}:\mathcal{N}(t) \rightarrow \Rbb^3$ of class $C^\infty$ in $\mathcal{N}(t)$ with the property that $\mathbf{n}|_{\del{}\mathcal{D}(t)}$ is the outward unit normal to $\del{}\mathcal{D}(t)$, then we further extend $\mathbf{n}$ from $\mathcal{N}(t)$ to $\overline{\Omega(t)}$ by assuming $\mathcal{O}(t)$ is a larger neighborhood of $\del{}\mathcal{D}(t)$ satisfying  $\del{}\mathcal{D}(t)\subset\mathcal{N}(t)$ and $\overline{\mathcal{N}(t)}\subset \mathcal{O}(t)$ (see the right figure in Figure \ref{fig:1}) and using Proposition \ref{t:normal3} (see Appendix \ref{s:normal}, we denote the extended vector field still by $\mathbf{n}$ and $\supp\mathbf{n}\subset \mathcal{O}(t)$).
	\begin{figure*}
		\includegraphics[width= 0.9\linewidth]{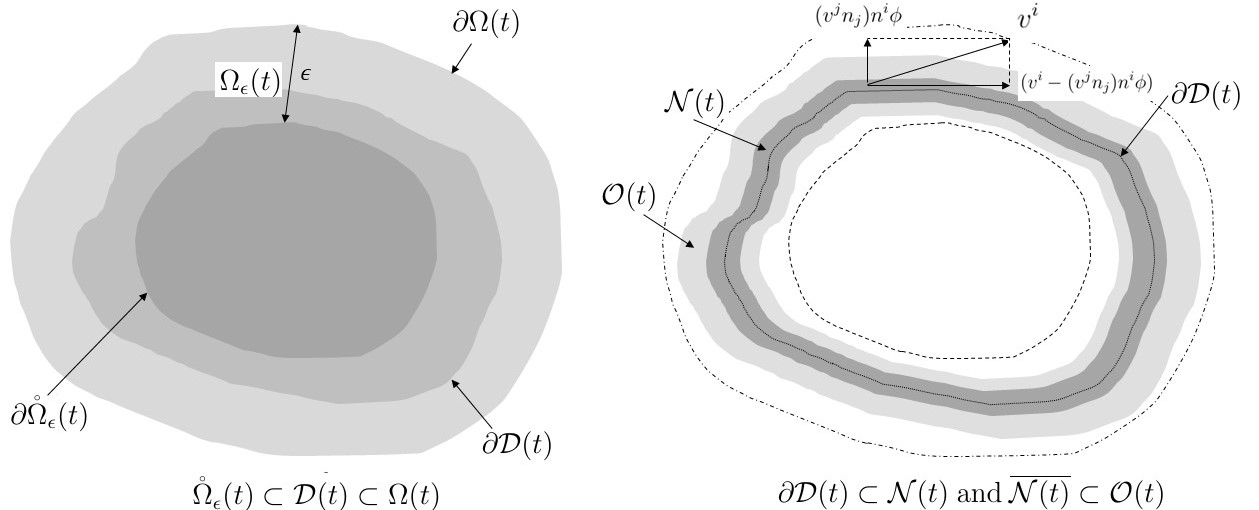}
		\caption{Areas: The left figure indicates the three domains $\Omega(t)$, $\ring{\Omega}_\epsilon(t)$, $\mathcal{D}(t)$ and the annular region $\Omega_\epsilon(t)$. The right one  shows the neighborhoods, $\mathcal{N}(t)$ and $\mathcal{O}(t)$, of $\del{}\mathcal{D}(t)$ (the dotted and dash lines indicate the corresponding boundaries of $\Omega(t)$, $\ring{\Omega}_\epsilon(t)$, $\mathcal{D}(t)$).  }
		\label{fig:1}
	\end{figure*}

	Then, there is a cut-off function $\phi\in C^\infty_0$ (see, for example, \cite[Proposition 2.25]{Lee2012}) such that for every $(t,\bx)\in[0,T^\star)\times \Rbb^3$,
	\begin{align}\label{e:phidef}
	\phi(t,\bx)\begin{cases}
	=0,  \quad &\text{for} \quad \bx\in
	\mathcal{O}^\mathbf{c}(t)  \\
	=1, \quad &\text{for} \quad \bx\in   \overline{\mathcal{N}(t)} \\
	\in(0,1), \quad &\text{for other} \quad \bx\in \Rbb^3
	\end{cases}.
	\end{align}
	Then $v^i$ can be decomposed as two fields
	\begin{equation}\label{e:wdecp}
	v^i= (v^jn_j)n^i \phi+(v^i-(v^jn_j)n^i \phi).
	\end{equation}
	for $(t,\bx)\in[0,T^\star)\times \overline{\Omega(t)}$.
	It is clear, by the definition \eqref{e:phidef} of $\phi$, that $(v^i-(v^jn_j)n^i \phi)|_{\mathcal{O}^\mathbf{c} }=v^i|_{\mathcal{O}^\mathbf{c} }$ and $(v^jn_j)n^i \phi|_{\overline{\mathcal{N}}}=(v^jn_j)n^i|_{\overline{\mathcal{N}}}$, $(v^jn_j)n^i \phi|_{\mathcal{O}^\mathsf{c}}=0$. We will see the vector field $v^i-(v^jn_j)n^i \phi$ and the smooth domain $\mathcal{D}(t)$ satisfy the requirements of Lemma \ref{t:FSY2}. Let us verify these conditions of Lemma \ref{t:FSY2} as follows.

	We can define the rotation and expansion of the vector field $v^i-(v^jn_j)n^i \phi$ as \eqref{e:WThOm0b} and \eqref{e:theta}, and denote them by $\widehat{\Omega}_{jk}$ and $\widehat{\Theta}$, respectively. Then in order to apply Lemma \ref{t:FSY2}, for every $t\in[0,T^\star)$ we verify, with the help of Corollary \ref{t:prodn2} and Sobolev embeddings $\|\cdot\|_{\Li} \lesssim \|\cdot\|_{H^s}$ for $s\geq 2$,
	\begin{align}
	&\|v^i-(v^jn_j)n^i \phi\|_{H^s(\mathcal{D}(t))} \leq \|v^i-(v^jn_j)n^i \phi\|_{H^s(\Omega(t))}  \notag  \\
	&\hspace{2cm} \leq  \|v^i\|_{H^s(\Omega(t))}+\|(v^jn_j)n^i \phi\|_{H^s(\Omega(t))} \notag  \\
	&\hspace{2cm}  \leq  \|v^i\|_{H^s(\Omega(t))}+\|v^jn_j\|_{H^s(\Omega(t))} \|n^i\|_{H^s(\Omega(t))} \|\phi\|_{H^s(\Omega(t))}  \leq C\|v^i\|_{H^s(\Omega(t))}. \label{e:west}
	\end{align}
where we have noted  $\|v^jn_j\|_{H^s(\Omega(t))} \leq C\|v^j\|_{H^s(\Omega(t))}$, $\|n^i\|_{H^s(\Omega(t))}\leq C$ and $\|\phi\|_{H^s(\Omega(t))} \leq C$ by their definitions.
	Since $\mathcal{D}(t)$ is a bounded, simply connected $C^\infty$ domain (this is because $\Omega(t)$ is  bounded, and $\Omega(t)$ is simply connected since $\Omega(0)$ is simply connected and the flow generated by $v^i$ is at least $C^1$), and direct examinations imply $(v^i-(v^jn_j)n^i \phi)n_i|_{\del{}\mathcal{D}(t)}= 0$. Hence, $\mathcal{D}(t)$ and the vector field $v^i-(v^jn_j)n^i \phi$ satisfy the conditions of Lemma \ref{t:FSY2}.

	Then Lemma \ref{t:FSY2} yields, for every $t\in[0,T^\star)$,
	\begin{align}\label{e:wtag}
	&\|v^i-(v^jn_j)n^i \phi\|_{W^{1,\infty}(\mathcal{D}(t))} \notag  \\
	& \leq C\bigl[\bigl(1+\ln^{+}\|v^i-(v^jn_j)n^i \phi\|_{H^s(\mathcal{D}(t))} \bigr)(\|\widehat{\Theta}\|_{\Li(\mathcal{D}(t))}+\|\widehat{\Omega}_{jk}\|_{\Li(\mathcal{D}(t))})+1\bigr].
	\end{align}
	
Since, by \eqref{e:phidef},  $(v^i-(v^jn_j)n^i \phi)|_{\mathcal{O}^\mathbf{c} }=v^i|_{\mathcal{O}^\mathbf{c} }$, we have $(v^i-(v^jn_j)n^i \phi)|_{\mathring{\Omega}_\epsilon(t) }=v^i|_{\mathring{\Omega}_\epsilon(t) }$ which in turn leads to
	\begin{align}\label{e:oth1}
	\widehat{\Omega}_{jk}|_{\mathring{\Omega}_\epsilon(t)}=\Omega_{jk}|_{\mathring{\Omega}_\epsilon(t)} \AND \widehat{\Theta}|_{\mathring{\Omega}_\epsilon(t)}=\Theta|_{\mathring{\Omega}_\epsilon(t)}.
	\end{align}
	Furthermore, since $\phi|_{\mathring{\Omega}_\epsilon(t)}=0$ (by the definition \eqref{e:phidef} of $\phi$, i.e., $\phi(t,\bx)=0$ for $\bx\in \mathcal{O}^\mathbf{c}(t)\supset \mathring{\Omega}_\epsilon(t)$), we note $(v^jn_j)n^i \phi=0$ for  $\bx\in\mathring{\Omega}_\epsilon(t)$, then this implies its norm
 \begin{equation}\label{e:v0}
 	\|(v^jn_j)n^i \phi\|_{W^{1,\infty}(\mathring{\Omega}_\epsilon(t))}=0.
 \end{equation}
 We also note the fact
	\begin{align*}
	\|v^i\|_{W^{1,\infty}(\Omega(t))}\leq& \|v^i\|_{W^{1,\infty}(\Omega_\epsilon(t))}+\|v^i\|_{W^{1,\infty}(\mathring{\Omega}_\epsilon(t))},
	\end{align*}
and
note the estimate $1+\ln^{+}\|v^i-(v^jn_j)n^i \phi\|_{H^s(\mathcal{D}(t))}=\ln e+\ln^{+}\|v^i-(v^jn_j)n^i \phi\|_{H^s(\mathcal{D}(t))}  \leq C\ln^{+}(e+C\|v^i\|_{H^s(\Omega(t))})$, and with the help of \eqref{e:oth1}, we obtain
\begin{align*}
	\|\widehat{\Theta}\|_{\Li(\mathcal{D}(t))}+\|\widehat{\Omega}_{jk}\|_{\Li(\mathcal{D}(t))} \leq &
	\|\widehat{\Theta}\|_{\Li(\Omega(t))}+\|\widehat{\Omega}_{jk}\|_{\Li(\Omega(t))} \notag  \\
	\leq & \|\Theta\|_{\Li(\mathring{\Omega}_\epsilon(t))}+\|\Omega_{jk}\|_{\Li(\mathring{\Omega}_\epsilon(t))}   + 	\|\widehat{\Theta}\|_{\Li(\Omega_\epsilon(t))}+\|\widehat{\Omega}_{jk}\|_{\Li(\Omega_\epsilon(t))} \notag  \\
	 \leq & \|\Theta\|_{\Li(\mathring{\Omega}_\epsilon(t))}+\|\Omega_{jk}\|_{\Li(\mathring{\Omega}_\epsilon(t))} +C\|v^i\|_{W^{1,\infty}(\Omega_\epsilon(t))},
\end{align*}
where the last term $C\|v^i\|_{W^{1,\infty}(\Omega_\epsilon(t))}$ takes care the near boundary estimates.
Then using above estimates,  \eqref{e:west1} (in Step $1$), \eqref{e:wdecp}, \eqref{e:west}, \eqref{e:wtag}, \eqref{e:v0} and $\mathring{\Omega}_\epsilon(t) \subset \mathcal{D}(t)\subset \Omega(t)$, we obtain
	\begin{align}\label{e:nw1}
	&\|V_{kj}(t)\|_{\Li(\Omega(t))}  \leq  \|v^i(t)\|_{W^{1,\infty}(\Omega(t))}  \notag  \\
	\leq& \|v^i\|_{W^{1,\infty}(\Omega_\epsilon(t))}+\|(v^jn_j)n^i \phi\|_{W^{1,\infty}(\mathring{\Omega}_\epsilon(t))} +\|v^i-(v^jn_j)n^i \phi\|_{W^{1,\infty}(\mathring{\Omega}_\epsilon(t))} \notag  \\
	\leq& \|v^i\|_{W^{1,\infty}(\Omega_\epsilon(t))}+ C\bigl[\bigl(1+\ln^{+}\|v^i-(v^jn_j)n^i \phi\|_{H^s(\mathcal{D}(t))} \bigr)\notag  \\
	&\times(\|\widehat{\Theta}\|_{\Li(\mathcal{D}(t))}+\|\widehat{\Omega}_{jk}\|_{\Li(\mathcal{D}(t))})+1\bigr]\notag  \\
	\leq& \|v^i\|_{W^{1,\infty}(\Omega_\epsilon(t))}+ C\bigl[ \bigl( \ln^{+}(e+C\|v^i\|_{H^s(\Omega(t))}) \bigr) (\|\Theta\|_{\Li(\mathring{\Omega}_\epsilon(t))} \notag  \\
	&+\|\Omega_{jk}\|_{\Li(\mathring{\Omega}_\epsilon(t))}+C\|v^i\|_{W^{1,\infty}(\Omega_\epsilon(t))})+1\bigr]  \notag  \\
	\leq& \|v^i\|_{W^{1,\infty}(\Omega_\epsilon(t))}+ C\bigl[ \bigl(\ln^{+}(e+C\|\mathcal{U}\|_{H^s(\Omega(t))}) \bigr) (\|\Theta\|_{\Li(\mathring{\Omega}_\epsilon(t))} \notag  \\
	&+\|\Omega_{jk}\|_{\Li(\mathring{\Omega}_\epsilon(t))}+C\|v^i\|_{W^{1,\infty}(\Omega_\epsilon(t))})+1\bigr]
	\end{align}	
for every $t\in[0,T^\star)$ where
\begin{equation*}%\label{e:west2}
	\|v^i\|_{W^{1,\infty}(\Omega_\epsilon(t))} \leq \|v^i \|_{\Li(\Omega(t))}
	+\|V_{kj}\|_{\Li(\Omega_\epsilon(t))}
\end{equation*}
and $\|v^i \|_{\Li(\Omega(t))}$ can be estimated by \eqref{e:west1}.  Note there is an unexpected term $\|\mathcal{U}\|_{H^s(\Omega(t))}$ in this estimate \eqref{e:nw1}. This will be coped with in the later steps.

In the end, let us estimate $\|\mathcal{U}\|_{H^s(\Omega(t))}$. We use the following procedure (this procedure has been used, for example, in \cite{Beale1984,Brauer1998,Ferrari1993}) to derive an estimate of $\|\mathcal{U}\|_{H^s(\Omega(t))}$ in terms of \eqref{e:int1a}. That is, using \eqref{e:aprest0}, \eqref{e:bdwv0} and \eqref{e:nw1}, we arrive at	\begin{align}\label{e:aprest3}
	\|\mathcal{U}(t)\|_{H^s(\Omega(t))}
	\leq &
	\|\mathcal{U}(0)\|_{H^s(\Omega(0))}  \exp\Bigl[C\int^t_0\bigl[\|\nabla  w(s)\|_{\Li(\Omega(s))} \notag  \\
	&+\|  V_{ij}(s)\|_{\Li(\Omega(s))}+C(\| w(s)\|_{\Li(\Omega(s))})   \bigr] ds \Bigr] \notag  \\
	\leq &
	\|\mathcal{U}(0)\|_{H^s(\Omega(0))} \exp\Bigl\{C\int^t_0\bigl[b(s) + C(a (s) y(s) +1)  \bigr] ds \Bigr\}
	\end{align}
for every $t\in[0,T^\star)$
	where
	\begin{align*}
	y(t):=&\ln^{+}(C\|\mathcal{U} \|_{ H^s(\Omega(t))}+e)=\ln(C\|\mathcal{U} \|_{ H^s(\Omega(t))}+e), \\
	a(t):=&\|\Theta\|_{ \Li(\mathring{\Omega}_\epsilon(t))}
	+\|\Omega_{jk}\|_{ \Li(\mathring{\Omega}_\epsilon(t))}
	+C\|v^i\|_{W^{1,\infty}(\Omega_\epsilon(t))} ,
	\intertext{and}
	b(t):=&\|\nabla w\|_{\Li(\Omega(t))}+\|v^i\|_{W^{1,\infty}(\Omega_\epsilon(t))}+C(\| w(t)\|_{\Li(\Omega(t))}) .
	\end{align*}
	Then noting
	\begin{equation*}
		e\leq e\cdot \exp\Bigl\{C\int^t_0\bigl[b(s) + C(a (s) y(s) +1)  \bigr] ds \Bigr\}
	\end{equation*}
	and calculating $\ln(C\times$\eqref{e:aprest3}$+e)$ yield
	\begin{align*}
	y(t)\leq & y(0)+ C \int^t_0 \bigl[\bigl(b(s)+C\bigr)+C a(s) y(s) \bigr] ds \notag \\
	\leq & y(0)+  \int^t_0 \bigl(C_1 b(s)+C_2\bigr) ds+\int^t_0 C_3 a(s) y(s)  ds
	\end{align*}
 for every $t\in[0,T^\star)$.
	Letting $G(t):= y(0)+\int^t_0 C_1 b(s)ds +C_2 t$, an increasing function of $t$, and using generalized Gronwall inequality of integral form (see Theorem \ref{e:grnwl} in Appendix \S\ref{s:grnwl}),  we arrive at
	\begin{align*}
	y(t)\leq G(t)\exp{\Bigl(\int^t_0C_3a(s)ds\Bigr)}
	\end{align*}
for every $t\in[0,T^\star)$. From this inequality, we improve a priori estimate of  $\|\mathcal{U}(t)\|_{H^s(\Omega(t))}$,
	\begin{align}\label{e:Uest}
	&\|\mathcal{U}(t)\|_{H^s(\Omega(t))}\leq \notag \\
	& \hspace{0.5cm} C\biggl\{ (C\| \mathcal{U}(0)\|_{H^s(\Omega(0))}+e)\exp\Bigl( C_2t+ C_1 \int^t_0b(s) ds+\exp\Bigl[C_3 \int^t_0  a(s) ds\Bigr]\Bigr) -e\biggr\}
	\end{align}
for every $t\in[0,T^\star)$.
Using \eqref{e:Uest}, \eqref{e:bdwv0} and Condition \eqref{e:contpr3} guarantee $\|\mathcal{U}(t)\|_{H^s(\Omega(t))}<\infty$.
It means we can continue the solution to a larger interval $[0,T^\prime)$ for $T^\prime>T^\star$ by the standard continuation criterion (see, for example, \cite[\S $2.2$]{Majda2012}). It contradicts the fact that $T^\star$ is the maximal time of existence. We then complete the proof.
\end{proof}

As \cite[Corollary $2$]{Brauer1998} for the key ideas, we can similarly prove the following corollary. We omit the detailed proof.
\begin{corollary}
	Under the assumption of Theorem \ref{t:conpri2} there are the following possibilities:
	\begin{enumerate}
		\item $T^\star=\infty$;
		\item or $T^\star<\infty$ and there is a small constant $\epsilon>0$ such that
		\begin{equation*}
			\int^{T^\star}_0 \Bigl(\|V_{jk}(s)\|_{L^\infty( \Omega_\epsilon(s))}+\|\Theta_{jk}(s)\|_{L^\infty(\mathring{\Omega}_\epsilon(s))}+\|\nabla  w(s)\|_{ L^\infty(\Omega(s))} \Bigr) ds=\infty.
		\end{equation*}
		where $\mathring{\Omega}_\epsilon(s):=\Omega(s)\setminus \Omega_\epsilon(s)$.
	\end{enumerate}
\end{corollary}
\begin{remark}
	Similarly to \cite[\S VII]{Brauer1998}, we can discuss the possible singularities.  We only mention some of them here. As mentioned in \cite[\S VII]{Brauer1998}, if the singularities occur  in the interior of the fluids, $\int^{T^\star}_0 \|\nabla  w(s)\|_{ L^\infty(\ring{\Omega}_\epsilon(s))}   ds=\infty$ implies $\int^{T^\star}_0 \|\nabla  \rho(s)\|_{ L^\infty(\ring{\Omega}_\epsilon(s))}   ds=\infty$. If this singular is near the boundary, there is no information implying $\int^{T^\star}_0 \|\nabla  \rho(s)\|_{ L^\infty(\Omega_\epsilon(s))}   ds=\infty$, in fact, it may not be true. In addition, $\int^{T^\star}_0 \|\nabla  w(s)\|_{ L^\infty( \Omega_\epsilon(s))}   ds=\infty$ implies $\int^{T^\star}_0  \| \rrho^{-\frac{\gamma-1}{2}}\nabla c_s^2(s)\|_{ L^\infty(\Omega_\epsilon(s))}   ds =+\infty$. From this relation, we can see the physical vacuum boundary $
	-\infty<\nabla_{\mathbf{n}}(c^2_s)<0$ on $\partial\Omega(t)$  is also allowable.
\end{remark}

\appendix

\section{Preliminary lemmas of uniqueness theorem \ref{t:unithm}}\label{a:Lpf}

\subsection{Relative entropy estimates}	
We summarize the following useful lemma from  \cite[Step $2$, Page $772$--$777$]{Luo2014} where the authors use it to prove the uniqueness theorem for physical vacuum free boundary problem, while we will use it to prove the uniqueness theorem \ref{t:unithm} of Problem \ref{prob}.
	\begin{lemma}\label{t:unqr3}
		Suppose $\gamma\in(1,2]$, $T>0$, $\rrho_\ell \in C^1([0,T)\times\Rbb^3)$ and  $v^i_\ell \in C^1([0,T)\times\overline{\Omega_\ell(t)}, \Rbb^3)
		\cap
		 W^{1,\infty}([0,T)\times \Rbb^3, \Rbb^3)$,
		for $\ell=1,2$, both solve the system,
		\begin{align*}
		\del{t}\rrho_\ell+ \del{i} ( \rrho_\ell v^i_\ell) = & 0 \quad &&\text{in}\quad [0,T)\times \Rbb^3 ,  \\
		\del{t}( \rrho_\ell v^k_\ell)+\del{i} ( \rrho  v^i_\ell  v^k_\ell) +\delta^{ik} \del{i} p_\ell = & -  \rrho_\ell\del{ }^k  \rPhi_\ell, && \text{in}\quad [0,T)\times \Rbb^3,  \\
		\rrho_\ell> & 0, &&  \text{in} \quad \Omega_\ell(t),  \\
		\rrho_\ell = & 0, &&  \text{in} \quad  \Rbb^3\setminus\Omega_\ell(t),
		\end{align*}
		where
		\begin{equation*}
		\rPhi_\ell(t,\bx)=   -\frac{1}{4 \pi}\int_{\Rbb^3} \frac{ \rrho_\ell(t,\by)}{|\bx -\by |}d^3 \by, \quad \text{for}\quad  (t, \bx) \in  [0,T)\times\Rbb^3, 	\end{equation*}
		the equation of state \eqref{e:eos1} holds.
        Define the relative entropy
        \begin{equation*}
        	\eta^\star:=\frac{1}{\gamma-1}[\rrho^\gamma_2-\rrho^\gamma_1-\gamma\rrho^{\gamma-1}_1(\rrho_2-\rrho_1)]+\frac{1}{2}\rrho_2|\mathbf{ v}_2-\mathbf{ v}_1|^2.
        \end{equation*}
        Then
        \begin{align}\label{e:etaest}
        	\eta^\star(t,\bx)\geq & C(\|\rrho_2(\cdot,t)\|_{L^\infty}+\|\rrho_2(\cdot,t)\|_{L^\infty})^{\gamma-2}(\rrho_2-\rrho_1)^2+\frac{1}{2}\rrho_2|\mathbf{v}_2-\mathbf{v}_1|^2\geq 0,
        \end{align}
        and $\eta^\star(t) \in L^1(\Rbb^3)$ with an estimate
        \begin{align*}
        	\int_{\Rbb^3} \eta^\star(t,\bx)d\bx \leq &\int_{\Rbb^3} \eta^\star(0,\bx)d\bx+ C\sup_{0\leq \tau<T}(\|\nabla_\bx \mathbf{ v}_1(\tau,\cdot)\|_{L^\infty}+Z(\tau))\int^t_0\int_{\Rbb^3} \eta^\star(\tau,\bx) d\bx d\tau
        \end{align*}
        where
        \begin{equation*}
        	Z(\tau):=\|\rrho_2(\tau,\cdot)\|_{L^\infty}(\|\rrho_2(\tau,\cdot)\|_{L^\infty}+\|\rrho_1(\tau,\cdot)\|_{L^\infty})^{2-\gamma} (\mathrm{Vol}S(\tau))^{\frac{2}{3}}
        \end{equation*}
        and
        \begin{equation*}
        	S(\tau):=\{\bx \in \Rbb^3\;|\; |\rrho_1(\tau,\bx)-\rrho_2(\tau,\bx)|>0\}
        \end{equation*}
        for $\tau\in[0,T)$.
	\end{lemma}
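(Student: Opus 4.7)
The plan is to adapt the classical relative-entropy--entropy-flux method for compressible Euler systems (in the spirit of Dafermos, and, in the vacuum-boundary context, of Luo--Xin--Zeng cited in the paper) to handle the nonlocal Newtonian gravity. I would organise the argument in three self-contained steps: (i) a pointwise lower bound on $\eta^\star$; (ii) the derivation of a differential inequality for $\int_{\Rbb^3}\eta^\star\, d\bx$ from the equations; and (iii) an elliptic estimate localising the gravitational cross-terms to $S(\tau)$.

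Step (i) is a Taylor expansion. Setting $f(\rrho):=\rrho^\gamma/(\gamma-1)$, one has $f''(\rrho)=\gamma\rrho^{\gamma-2}$, and Taylor's theorem with integral remainder yields
\begin{align*}
\frac{\rrho_2^\gamma-\rrho_1^\gamma-\gamma\rrho_1^{\gamma-1}(\rrho_2-\rrho_1)}{\gamma-1}=\gamma(\rrho_2-\rrho_1)^2\int_0^1(1-\tau)\bigl(\rrho_1+\tau(\rrho_2-\rrho_1)\bigr)^{\gamma-2}d\tau.
\end{align*}
Since $\gamma-2\le 0$, the integrand is bounded below by $c(\|\rrho_1\|_{L^\infty}+\|\rrho_2\|_{L^\infty})^{\gamma-2}$ uniformly in $\tau$, which gives the asserted pointwise bound on the pressure part of $\eta^\star$; the kinetic part is manifestly non-negative.

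Step (ii) is algebraic bookkeeping on the two systems. Multiplying the $\mathbf{w}_2$-momentum equation by $w_2^k-w_1^k$, using the $\rrho_2$-continuity equation to symmetrise, and subtracting the analogous identity for solution $1$ weighted by $\rrho_2$, produces a pointwise identity of the schematic form
\begin{align*}
\del{t}\eta^\star+\nabla\cdot\mathbf{q}=-\rrho_2\,(\mathbf{w}_2-\mathbf{w}_1)\cdot\nabla\mathbf{w}_1\cdot(\mathbf{w}_2-\mathbf{w}_1)-\mathcal{P}_{\mathrm{rel}}\,\nabla\cdot\mathbf{w}_1+\mathcal{G},
\end{align*}
where $\mathcal{P}_{\mathrm{rel}}$ is pointwise dominated by $C\eta^\star$ and $\mathcal{G}$ collects all gravitational contributions. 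Integrating on $\Rbb^3$ kills the flux term (by compact support of $\rrho_\ell$ and decay of $\nabla\rPhi_\ell$), and the two non-gravitational bulk terms together contribute at most $C\|\nabla\mathbf{w}_1\|_{L^\infty}\int\eta^\star$.

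Step (iii) handles $\mathcal{G}$, which after integration by parts reduces to cross-terms of the type $\int_{\Rbb^3}\rrho_2(\mathbf{w}_2-\mathbf{w}_1)\cdot\nabla(\rPhi_2-\rPhi_1)\,d\bx$. Young's inequality controls these by $\int\eta^\star+\|\rrho_2\|_{L^\infty}\|\nabla(\rPhi_2-\rPhi_1)\|_{L^2}^2$. The crucial observation is that $\Delta(\rPhi_2-\rPhi_1)=\rrho_2-\rrho_1$ is \emph{supported in $S(\tau)$}; an energy identity combined with the Sobolev embedding $\dot H^1(\Rbb^3)\hookrightarrow L^6(\Rbb^3)$ and H\"older's inequality then gives
\begin{align*}
\|\nabla(\rPhi_2-\rPhi_1)\|_{L^2}^2\le C\|\rrho_2-\rrho_1\|_{L^{6/5}}^2\le C(\mathrm{Vol}\,S(\tau))^{2/3}\|\rrho_2-\rrho_1\|_{L^2}^2,
\end{align*}
and the pointwise lower bound from (i) converts $\|\rrho_2-\rrho_1\|_{L^2}^2$ into $C(\|\rrho_1\|_{L^\infty}+\|\rrho_2\|_{L^\infty})^{2-\gamma}\int\eta^\star$, reproducing exactly the factor $Z(\tau)$ in the statement; integrating in $t$ then finishes the inequality. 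I expect the main obstacle to be step (ii): arranging the algebraic cancellations so that the gravitational contributions reduce \emph{precisely} to cross-terms of the form $(\rrho_2-\rrho_1)\nabla(\rPhi_2-\rPhi_1)$, which are the only terms the $S(\tau)$-localisation can absorb, and the Poisson stress-tensor identity $2\del{}^i\rPhi\,\Delta\rPhi=\del{j}(|\nabla\rPhi|^2\delta^{ij}-2\del{}^i\rPhi\del{}^j\rPhi)$ is what makes this possible.
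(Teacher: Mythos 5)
The paper does not actually write out a proof of this lemma --- it simply cites Step~$2$ of the Luo--Xin--Zeng uniqueness argument --- and your reconstruction follows that relative entropy--entropy flux method in outline: the Taylor remainder with $\gamma-2\le0$ gives the coercivity \eqref{e:etaest}, a Dafermos-style relative-entropy identity handles the non-gravitational bulk with only $\|\nabla\mathbf{w}_1\|_{L^\infty}$ surviving, and the $\dot H^1\hookrightarrow L^6$ duality plus H\"older on $S(\tau)$ localises the Newtonian cross-term exactly as you describe, producing the factor $Z(\tau)$. One clarification on the point you flag as the main obstacle in step (ii): the gravitational source contribution to $\del{t}\int\eta^\star\,d\bx$ collapses, purely from the relative-entropy linearisation, to the single term $-\int_{\Rbb^3}\rrho_2(\mathbf{w}_2-\mathbf{w}_1)\cdot\nabla(\rPhi_2-\rPhi_1)\,d\bx$. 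Indeed the momentum component of $D^2\eta(U_1)(U_2-U_1)$ equals $\rrho_2(\mathbf{w}_2-\mathbf{w}_1)/\rrho_1$, so the $\rrho_2(\mathbf{w}_2-\mathbf{w}_1)\cdot\nabla\rPhi_1$ piece coming from $D^2\eta(U_1)(U_2-U_1)\cdot S(U_1)$ cancels against the $\nabla\rPhi_1$ part of $(D\eta(U_2)-D\eta(U_1))\cdot S(U_2)$ automatically, and the Poisson stress-tensor identity you invoke is not actually needed here. A minor bookkeeping note: Young's inequality as you apply it produces a harmless extra $C\int\eta^\star$ term, which is absorbed into the $\sup_\tau(\cdots)$ factor; the exact power of the $L^\infty$-norms in $Z(\tau)$ is immaterial for the Gronwall step in Lemma~\ref{t:samedt}, so the small mismatch between your intermediate bound and the stated $Z(\tau)$ does not affect the conclusion.
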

	\begin{proof}
		See Step $2$ in \cite[Page $772$--$777$]{Luo2014} for detailed proof. We only prove $\eta^\star(t)\in L^1$. Since by the assumptions of Lemma B.$1$, $\rho_\ell=0$ ($\ell=1,2$) in $\Rbb^3\setminus \Omega_\ell(t)$, then by the definition of $\eta^\star$, we have, for $(t,\mathbf{x})\in [0,T)\times (\Omega_1(t)\cup \Omega_2(t))^\mathsf{c}$,
		\begin{equation*}
			\eta^\star:=\frac{1}{\gamma-1}[\rho^\gamma_2-\rho^\gamma_1-\gamma\rho^{\gamma-1}_1(\rho_2-\rho_1)]+\frac{1}{2}\rho_2|\mathbf{ v}_2-\mathbf{ v}_1|^2=0.
		\end{equation*}
		Again by the definition of $\eta^\star$, $\eta^\star(t) \in L^\infty(\Omega_1(t)\cup \Omega_2(t))\subset L^1(\Omega_1(t)\cup \Omega_2(t))$ due to the fact $\rho_\ell \in C^1([0,T)\times\Rbb^3)$ and  $v^i_\ell \in C^1([0,T)\times\overline{\Omega_\ell(t)}, \Rbb^3)
		\cap
		W^{1,\infty}([0,T)\times \Rbb^3, \Rbb^3)$.
	\end{proof}

\subsection{The proof of Proposition \ref{t:bdgl} for boundary gluing}\label{s:bdgl}

	Let us focus on $\mathcal{D}_l  \cup \mathcal{D}_b$ is Lipschitz and $\mathcal{D}_l  \cup \mathcal{D}_t$ can be proven similarly, and then conclude this proposition. The key idea of this proof
	is to construct a local Lipschitz function near $\mathcal{D}_l  \cap \mathcal{D}_b$ and further the local boundary of $\mathcal{D}_l  \cup \mathcal{D}_b$ near $\mathcal{D}_l  \cap \mathcal{D}_b$ is Lipschitz. We also prove $\mathcal{D}_l\in C^1$, and with the help of $\mathcal{D}_b\in C^1$, we obtain the complete boundary is Lipschitz.

\subsubsection{Step $1$: $\mathcal{D}_l \in C^1$ and  $\mathcal{D}_l$ and $\mathcal{D}_b$ are not tangential to each other near $\mathcal{D}_l\cap \mathcal{D}_b$}
Since $\del{}\Omega(0)$ is a $C^1$ boundary of $\Omega(0)$, by Definition \ref{t:bdryreg}, for every point $\xi^i\in\del{}\Omega(0)$, there is a  rigid map $\mathfrak{T}$, a neighborhood $\mathcal{C}:=B(0,d)\times(-h,h)\subset \Rbb^3$ where constants $d,h>0$, along with a $C^1$ function $\psi:\Rbb^{2}\supset \overline{B(0,d)}\rightarrow (-h,h)$, such that $\psi(0,0)=0$, $\nabla \psi(0,0)=0$, and satisfies
	\begin{align}
	\mathcal{C}\cap \mathfrak{T}(\Omega(0) )=&\{(\xi^1,\xi^2,\xi^3)\in \mathcal{C}\;|\;\xi^3>\psi(\xi^1,\xi^2)\};  \label{e:psiint} \\
	\mathcal{C}\cap \mathfrak{T}(\del{}\Omega(0) )=&\{(\xi^1,\xi^2,\xi^3)\in \mathcal{C} \;|\;\xi^3=\psi(\xi^1,\xi^2)\}. \label{e:psi}
\end{align}
Note \eqref{e:psiint} implies if $\xi^3>0$ and $(0,0,\xi^3)\in\mathcal{C}$, a point  $(0,0,\xi^3)\in\mathcal{C}\cap \mathfrak{T}(\Omega(0) )$  since $\xi^3>0=\psi(0,0)$.
	
By Lemma \ref{t:surfc2} and Theorem \ref{t:C1bdry},
\begin{equation*}
	\mathcal{D}_l=[0,T)  \times \del{}\Omega(t)
	=  \bigl\{\bigl(t,\varphi(t,0,\xi)\bigr)\;|\;t\in [0,T) , \xi\in \del{}\Omega(0) \bigr\}
\end{equation*}
where $\varphi$ is the $C^1$ flow generalized by the vector
field $\mathbf{v}$ and the inverse of $\bx=\varphi(t,0,\xi) = \varphi^{(t,0)}(\xi)$ is $\xi=\varphi(0,t,\bx) = \varphi^{(0,t)}(\bx)$. For simplicity of statements, we extend
%\footnote{This can be done by several ways, for example, use Proposition to extend the vector field $v^i$ and further extend the flow, or solve the Euler--Poisson system locally to $(-T_1,0]$. }
 $C^1$ flow $\varphi$ to $t\in(-T_1,T)$ ($T_1>0$) and denote $\tilde{\mathcal{D}}_l=  \bigl\{\bigl(t,\varphi(t,0,\xi)\bigr)\;|\;t\in (-T_1,T) , \xi\in \del{}\Omega(0) \bigr\}$ and the components of $\xi$ by $\xi^i:=  \varphi^i(0, t, \bx)$.

Let us prove $\mathcal{D}_l\in C^1$. We need to prove for every point $x^\mu\in\mathcal{D}_l$, it locally can be represented by a graph of a $C^1$ function. However, without loss of generality, we only focus on the case $\xi^\mu=(0,0,0,0)\in \mathcal{D}_l$. Because for any  case $\xi^\mu=(0,\xi)\in \mathcal{D}_l$, we can use some rigid map $\mathfrak{T}$ to move the target point to the origin and further, for any more general case $x^\mu\in\mathcal{D}_l$, since Theorem \ref{t:C1bdry} implies $\del{}\Omega(t)$ is $C^1$ for every $t\in[0,T]$, we shift $x^\mu\in\mathcal{D}_l$ to $x^0=0$ by a rigid map and it turns to the previous case. From now on, we only concentrate on the case $\xi$ is on the origin.

By \eqref{e:psi}, we have
\begin{align*}%\label{e:labdry}
	\Psi(t,\bx):
	=&  \varphi^3(0, t,\bx)-\psi\bigl( \varphi^1(0, t,\bx), \varphi^2(0, t,\bx)\bigr)=0
\end{align*}	
for $(t,\bx)\in \tilde{\mathcal{D}}_l$ to represent the surface $ \mathcal{D}_l\subset \tilde{\mathcal{D}}_l$.	Therefore, $\Psi\in C^1$.
Note $\varphi^{(0,t)}\circ\varphi^{(t,0)}=\mathds{1}$ ($\mathds{1}$ denotes an identity map), i.e. $\varphi(0,t,\varphi(t,0,\xi))=\xi$, then by chain rules, differentiating this identity with respect to the time $t$, we obtain
\begin{align}\label{e:deltpsi}
	\del{t}\varphi^j(0,t,\bx)+v^i\frac{\del{}\xi^j}{\del{}x^i}=0.
\end{align}

Next, let us calculate the gradient of $\Psi$,
\begin{align*}
	\del{\mu}\Psi=& \del{\mu}\varphi^3(0, t,\bx)-   (\del{\xi^1}\psi) \del{\mu}\varphi^1(0, t,\bx)-  (\del{\xi^2}\psi)  \del{\mu}\varphi^2(0, t,\bx).
\end{align*}
That is, with the help of \eqref{e:deltpsi},
\begin{align*}
	\del{t}\Psi=&-v^i\frac{\del{}\xi^3}{\del{}x^i}+(\del{\xi^1}\psi) v^i\frac{\del{}\xi^1}{\del{}x^i}+(\del{\xi^2}\psi) v^i\frac{\del{}\xi^2}{\del{}x^i} \\
	\intertext{and}
	\del{k}\Psi=&\frac{\del{}\xi^3}{\del{}x^k} - (\del{\xi^1}\psi)\frac{\del{}\xi^1}{\del{}x^k}  - (\del{\xi^2}\psi)\frac{\del{}\xi^2}{\del{}x^k} .
\end{align*}
Since $\varphi$ is a flow, it is invertible (diffeomorphism), $\det(\frac{\del{}\xi^i}{\del{}x^k})_{3\times 3} \neq 0$ and $\frac{\del{}\xi^i}{\del{}x^k}\big|_{(t,\bx)=(0,\xi)}=\delta^i_k$ due to $\varphi(0,0,\xi)\equiv \xi$. Then a normal vector $\mathbf{n}_1$ of the three dimensional hypersurface $\tilde{\mathcal{D}}_l $ at the origin in $ \Rbb^4$ is,
\begin{align}\label{e:nabF}
	\mathbf{n}_1:=\del{\mu}\Psi(0,0,0,0)=(-v^3,0,0,1)\neq (1,0,0,0)=:\mathbf{n}_0
\end{align}
where $\mathbf{n}_0$ is a unit normal vector of $\mathcal{D}_b$. This means $\tilde{\mathcal{D}}_l$ and $\mathcal{D}_b$ are not tangential to each other at the origin.
	
Since $\del{3}\Psi(0,0,0,0)=1\neq 0$ as indicated by \eqref{e:nabF}, with the help of the implicit function theorem, there is a small neighborhood  $\mathcal{C}_0$ of the origin and a function $f\in C^1$, such that in this neighborhood, $x^3=f(x^0,x^1,x^2)$ satisfying $\xi^3=f(0,\xi^1,\xi^2)$, i.e., $0=f(0,0,0)$. Since Theorem \ref{t:C1bdry} implies $\del{}\Omega(t)$ is $C^1$ for every $t\in[0,T]$, the above arguments hold for every point $x^\mu\in[0,T]\times \del{}\Omega(t)$ by shifting every $t>0$ to $0$ (this shift is included in the following rigid map $\mathfrak{T}_1$). That is, for every $(t,\bx)\in[0,T]\times \del{}\Omega(t)$, there is a rigid map $\mathfrak{T}_1:\Rbb^4\rightarrow \Rbb^4$ with $\mathfrak{T}_1(t,\bx)=0$ and a neighborhood $\mathcal{C}_1:=B(0,d_1)	\times (-h_1,h_1)\subset \mathcal{C}_0$, along with a $C^1$ function  $\tilde{f}:\Rbb^3\supset \overline{B(0,d_1)}\rightarrow (-h_1,h_1)$ ($\tilde{f}$ is a function that $f$ compose a certain rotation such that $\nabla \tilde{f}(0)=0$, we omit the detailed expression), such that $\tilde{f}(0)=0$, $\nabla \tilde{f}(0)=0$, and Theorem \ref{t:bdrythm}, \eqref{e:psi} and \eqref{e:psiint} imply
	\begin{align*}
		\mathcal{C}_1\cap \mathfrak{T}_1([0,T]\times\Omega(t) ) =& \{x^\mu \in \mathcal{C}_1\;|\;\Psi(t,\bx)>0\}
		=  \{x^\mu \in \mathcal{C}_1\;|\;x^3>\tilde{f}(t,x^1,x^2)\} ; \\
		\mathcal{C}_1\cap \mathfrak{T}_1([0,T]\times\del{}\Omega(t) ) =& \{x^\mu \in \mathcal{C}_1\;|\;\Psi(t,\bx)=0\}
		=   \{x^\mu \in \mathcal{C}_1\;|\;x^3=\tilde{f}(t,x^1,x^2)\}.
	\end{align*}
This concludes $\mathcal{D}_l=[0,T]\times \del{}\Omega(t)\in C^1$.

\subsubsection{Step $2$: Construction of local graph}
In order to prove $\mathcal{D}_l\cup\mathcal{D}_b$ is Lipschitz, we focus on the points on $\mathcal{D}_l\cap\mathcal{D}_b$ and see if in neighborhoods of these points,  $\mathcal{D}_l\cup\mathcal{D}_b$ can be represented by a local graph of a Lipschitz function. In order to construct a local Lipschitz function (and to avoid the multi-valued function appearing), we introduce a rotation in $\Rbb^4$, and will rotate the original coordinate $\{x^\mu\}$ to a new one $\{\hat{x}^\mu\}$, such that $\mathcal{D}_l\cup \mathcal{D}_b$ is locally graph-represented by a suitable Lipschitz function in $\{\hat{x}^\mu\}$.
\begin{align*}
	\mathfrak{R}_\vartheta:=\p{\cos\vartheta & 0 & 0 & -\sin\vartheta\\
	0 & 1 & 0 & 0 \\
    0 & 0 & 1 & 0 \\
 \sin\vartheta & 0 & 0 & \cos\vartheta }
\end{align*}
where $\vartheta\in \bigl(0,\frac{\pi}{2}\bigr)$  is given by\footnote{In fact, $2\vartheta$ is the angular  between $\mathbf{n}_1$ and $\mathbf{n}_0$ (recall \eqref{e:nabF}), \begin{equation*}
		2\vartheta=  \arccos\frac{-v^3}{\sqrt{1+|v^3|^2}}.
\end{equation*}	
This can be verified by noting  $\vartheta\in\bigl(0,\frac{\pi}{2}\bigr)$ and
 \begin{align*}
	\cos(2\vartheta)=\frac{1-\tan^2 \vartheta}{1+\tan^2 \vartheta} =\frac{-v^3}{\sqrt{1+|v^3|^2}}.
\end{align*}}
\begin{equation}\label{e:rta}
	\tan \vartheta=v^3+\sqrt{1+(v^3)^2}>0.
\end{equation}
From this, we also know the inverse of the rotation is  $\mathfrak{R}_\vartheta^{-1}=\mathfrak{R}_{-\vartheta}$.
	
Then we obtain a coordinate transform  $x^\mu=\mathfrak{R}_\vartheta \hat{x}^\mu$ for fixed $\vartheta$ defined by \eqref{e:rta}. By reexpressing $\Psi(x^\mu)=0$, for $x^\mu \in \tilde{\mathcal{D}}_l$, in terms of $\hat{x}^\mu$, we obtain	$\Psi\circ\mathfrak{R}_\vartheta(\hat{x}^\mu)=0$. We still, with loss of generality, focus on the case  $x^\mu=(0,\xi^i)=(0,0,0,0)\in\overline{\mathcal{D}}_l\cap\overline{\mathcal{D}}_b$. For other general cases, if $x^\mu$ is not the origin, we use a rigid map to move it to the origin as before.

By assuming for a point $x^\mu=(0,0,0,0)$, we know $\Psi\circ\mathfrak{R}_\vartheta(0,0,0,0)=0$ and  $\del{\hat{x}^0}(\Psi\circ\mathfrak{R}_\vartheta)=\del{\mu}\Psi\cdot(\cos\vartheta,0,0,\sin\vartheta)=(-v^3,0,0,1)\cdot(\cos\vartheta,0,0,\sin\vartheta)=-v^3\cos\vartheta+\sin\vartheta=\cos\vartheta \sqrt{1+(v^3)^2}\neq 0$. Then with the help of the implicit theorem, there is a small neighborhood $0\in \mathcal{C}_2 :=B(0,d_2) \times (-h_2,h_2) \subset \Rbb^3\times \Rbb$
and in it there is a function $\hat{f}\in C^1(B(0,d_2))$, such that $\hat{x}^0=\hat{f}(\hat{x}^1,\hat{x}^2,\hat{x}^3)$ satisfying $\hat{f}(0,0,0)=0$ and
\begin{align}
	\del{\hat{x}^2}\hat{f}|_{(0,0,0)}=&-\frac{\del{\hat{x}^2}(\Psi\circ\mathfrak{R}_\vartheta)}{\del{\hat{x}^0}(\Psi\circ\mathfrak{R}_\vartheta)}=0 \AND  \del{\hat{x}^1}\hat{f}|_{(0,0,0)}=-\frac{\del{\hat{x}^1}(\Psi\circ\mathfrak{R}_\vartheta)}{\del{\hat{x}^0}(\Psi\circ\mathfrak{R}_\vartheta)}=0, \label{e:delf1} \\
	\del{\hat{x}^3}\hat{f}|_{(0,0,0)}=&-\frac{\del{\hat{x}^3}(\Psi\circ\mathfrak{R}_\vartheta)}{\del{\hat{x}^0}(\Psi\circ\mathfrak{R}_\vartheta)}=-\frac{v^3\sin\vartheta+\cos\vartheta}{-v^3\cos\vartheta+\sin\vartheta}=\tan\vartheta +\frac{1}{(v^3-\tan\vartheta)\cos^2\vartheta} \notag   \\
	=& \tan\vartheta  - \frac{1}{  \sqrt{1+(v^3)^2}\cos^2\vartheta}. \label{e:delf2}
\end{align}
Then because of $\hat{f}\in C^1(B(0,d_2))$, we obtain $\del{\hat{x}^k}\hat{f}\in C^0(B(0,d_2))$, i.e., for any $\epsilon\in\bigl(0,\frac{1}{\sqrt{1+(v^3)^2}}\bigr)$, there is a constant $\delta>0$, such that if $|\hat{\bx}|<\delta$, then $|\del{\hat{x}^k}\hat{f}(\hat{\bx})-\del{\hat{x}^k}\hat{f}(\mathbf{0})| <\epsilon$.

Denote $\mathcal{C}_4:=\mathfrak{R}_\vartheta\bigl(B(0,\delta)\times(-h_2,,h_2)\bigr)$ and define
\begin{align*}
	g(x^1,x^2,x^3):=\hat{f}(x^1,x^2,x^3 \cos \vartheta)-x^3\sin\vartheta
\end{align*}
Let us now prove
\begin{align}
	\mathcal{C}_4\cap\overline{\mathcal{D}_l}\cap \overline{\mathcal{D}_b}=&\mathcal{C}_4\cap\bigl(\{0\}\times \del{}\Omega(0)\bigr)=  \{(0,x^1,x^2,x^3)\in \mathcal{C}_4\;|\; g(x^1,x^2,x^3)=0\} \label{e:int1}
	\intertext{and}
	\mathcal{C}_4\cap \mathcal{D}_b=&\mathcal{C}_4\cap \bigl(\{0\}\times \Omega(0)\bigr)=  \{(0,x^1,x^2,x^3)\in \mathcal{C}_4\;|\; g(x^1,x^2,x^3)<0\}. \label{e:int2}
\end{align}

First we note for any $x^\mu \in \mathcal{C}_4\cap\overline{\mathcal{D}_l}\cap \overline{\mathcal{D}_b}$, since $x^\mu\in\overline{\mathcal{D}_l}$, we have  $\hat{x}^0=\hat{f}(\hat{x}^1,\hat{x}^2,\hat{x}^3)=\hat{f}(x^1,x^2,-x^0\sin\vartheta+x^3\cos\vartheta)$, and since  $\hat{x}^0=x^0\cos\vartheta+x^3\sin\vartheta$, and $x^\mu\in\overline{\mathcal{D}_b}$ implies $x^0=0$, we obtain $\hat{f}(x^1,x^2, x^3\cos\vartheta) =x^3\sin\vartheta$, then $g(x^1,x^2,x^3)=0$, i.e., we complete \eqref{e:int1}.

Let us prove \eqref{e:int2} now. Firstly, with the help of $\hat{x}^3 = - x^0 \sin \vartheta + x^3\cos \vartheta =  x^3\cos \vartheta$ (since $x^0 = 0$ for $x^\mu\in \mathcal{D}_b$) and the mean value theorem, there is a point $\hat{\bx}_0=\lambda_1 \hat{\bx}$ for some constant $\lambda_1\in(0,1)$,
\begin{align*}
	\hat{f}(\hat{x}^1,\hat{x}^2,\hat{x}^3) -\hat{x}^3 \tan \vartheta
	=  \hat{x}^1 \del{\hat{x}^1} 	\hat{f}(\hat{\bx}_0)  +\hat{x}^2 \del{\hat{x}^2} 	\hat{f}(\hat{\bx}_0)   +\hat{x}^3 \del{\hat{x}^3} 	\hat{f}(\hat{\bx}_0) -\hat{x}^3\tan \vartheta
\end{align*}
then for $(0,0,0,x^3)\in \mathcal{C}_4\cap\mathcal{D}_b$ ($x^3=\xi^3>0$),  using $\epsilon<\frac{1}{\sqrt{1+(v^3)^2}}$ and  \eqref{e:delf1}--\eqref{e:delf2},
\begin{align}\label{e:gest1}
	& g(0,0,x^3)=\hat{f}(0,0,\hat{x}^3) -\hat{x}^3 \tan \vartheta
	<   \hat{x}^3 \del{\hat{x}^3} 	\hat{f}(\mathbf{0})  +\hat{x}^3 \epsilon -\hat{x}^3\tan \vartheta  \notag  \\
	&\hspace{1cm}=    - \frac{\hat{x}^3 }{  \sqrt{1+(v^3)^2}\cos^2 \vartheta}  +\hat{x}^3 \epsilon
	=  \biggl( - \frac{1 }{  \sqrt{1+(v^3)^2} }+\epsilon \cos \vartheta\biggr)x^3 <0,
\end{align}
Next we prove for any $ (0,y^1,y^2,y^3) \in \mathcal{C}_4\cap \mathcal{D}_b$, $g(y^1,y^2,y^3)<0$. We prove this by contradictions. Assume $g(y^1,y^2,y^3) \geq 0$. We focus on $g(y^1,y^2,y^3) > 0$ (since $g(y^1,y^2,y^3) = 0$ implies $(0,y^1,y^2,y^3)\in \mathcal{C}_4\cap\overline{\mathcal{D}_l}\cap \overline{\mathcal{D}_b}$, this leads to a contradiction), then there is a connected open set $\mathcal{O}$, such that $(0,0,0,x^3)$ and $(0,y^1,y^2,y^3) \in\mathcal{O}\subset \mathcal{C}_4\cap\mathcal{D}_b$. Then by the continuity of $g$ and intermediate value theorem, there is a point $(0,z^1,z^2,z^3)\in\mathcal{O}$, such that $g(z^1,z^2,z^3)=0$, which implies $(0,z^1,z^2,z^3)\in \mathcal{C}_4\cap\overline{\mathcal{D}_l}\cap \overline{\mathcal{D}_b}$, this leads to a contradiction. Therefore, $g(y^1,y^2,y^3)<0$.

Gathering the above facts together, we conclude for every point in $\mathcal{D}_l\cap\mathcal{D}_b$, there is a rigid map $\mathfrak{T}_4$ (including above rigid movements and rotation $\mathfrak{R}_\vartheta$), a small neighborhood $ \widetilde{\mathcal{C}}_4:=B(0,\delta)\times (-h_0,h_0) $, along with a function defined by
\begin{align}\label{e:mfF}
	\mathfrak{F}(\hat{x}^1,\hat{x}^2,\hat{x}^3):
	=&\max\{\hat{f}(\hat{x}^1,\hat{x}^2,\hat{x}^3),\hat{x}^3	\tan\vartheta\}
\end{align}
for $\hat{x}^i\in\overline{B(0,\delta)}$,
such that
\begin{align}
	\mathcal{D}_4\cap\mathfrak{T}_4(\mathcal{D}_l\cup\mathcal{D}_b)=&\{\hat{x}^\mu\in \widetilde{\mathcal{C}}_4\;|\;\hat{x}^0=\mathfrak{F}(\hat{x}^1,\hat{x}^2,\hat{x}^3)\};  \label{e:dt1} \\
	\mathcal{D}_4\cap\mathfrak{T}_4((0,T)\times \Omega(t))=&\{\hat{x}^\mu\in \widetilde{\mathcal{C}}_4\;|\;\hat{x}^0>\mathfrak{F}(\hat{x}^1,\hat{x}^2,\hat{x}^3)\}. \label{e:dt2}
\end{align} 	

Firstly, let us first prove \eqref{e:dt1}. Since $\widetilde{\mathcal{C}}_4\cap \mathfrak{T}_4(\mathcal{D}_l\cup\mathcal{D}_b)=(\widetilde{\mathcal{C}}_4\cap\mathfrak{T}_4 \mathcal{D}_l)\cup(\widetilde{\mathcal{C}}_4\cap\mathfrak{T}_4\mathcal{D}_b)$.
If a point  $x^\mu\in\mathcal{D}_l \cap \mathcal{C}_4 $, then its time component $t=x^0 \geq 0$, i.e.,  $x^0=\hat{x}^0\cos\vartheta-\hat{x}^3 \sin \vartheta \geq 0$ and it implies $\hat{x}^0=\hat{f}(\hat{x}^1,\hat{x}^2,\hat{x}^3)\geq \hat{x}^3\tan\vartheta$. In this case, it means \eqref{e:mfF}, i.e., $\hat{x}^0=\hat{f}(\hat{x}^1,\hat{x}^2,\hat{x}^3)=\max\{\hat{f}(\hat{x}^1,\hat{x}^2,\hat{x}^3),\hat{x}^3	\tan\vartheta\} =\mathfrak{F}(\hat{x}^1,\hat{x}^2,\hat{x}^3)
$.
On the other hand, if a point  $x^\mu\in \mathcal{D}_b \cap \mathcal{C}_4 $, then its time component $t=x^0 = 0$, i.e.,  $x^0=\hat{x}^0\cos\vartheta-\hat{x}^3 \sin \vartheta = 0$ and it implies $\hat{x}^0 = \hat{x}^3\tan\vartheta$, and by \eqref{e:int2}, we arrive at $g(x^1,x^2,x^3)=\hat{f}(x^1,x^2,x^3\cos \vartheta)-x^3\sin\vartheta<0$, and by using $\hat{x}^\mu=\mathfrak{R}_{-\vartheta}x^\mu$ and $x^0=0$,  we obtain $ \hat{f}(\hat{x}^1,\hat{x}^2,\hat{x}^3)<\hat{x}^3\tan\vartheta$. This yields $\hat{x}^0=\hat{x}^3\tan\vartheta=\mathfrak{F}(\hat{x}^1,\hat{x}^2,\hat{x}^3)$.

Secondly, let us prove \eqref{e:dt2}. We first recall $(0,0,0,x^3)\in \mathcal{C}_4\cap \mathcal{D}_b$ (for $x^3>0$) satisfies $\hat{x}^0>\hat{f}(\hat{x}^1,\hat{x}^2,\hat{x}^3)$ due to the fact that $\hat{x}^\mu=\mathfrak{R}_{-\vartheta}x^\mu$ and $x^3\sin\vartheta>\hat{f}(0,0,x^3\cos\vartheta)$ (see \eqref{e:gest1}). Then, by intermediate value theorem again, we claim for any point $x^\mu\in(0,T)\times \Omega(t)$, on one hand,  $\hat{x}^0>\hat{f}(\hat{x}^1,\hat{x}^2,\hat{x}^3)$ for $\hat{x}^\mu=\mathfrak{R}_{-\vartheta}x^\mu$ (otherwise, proofs by contradictions, there is a point $z\in(-T_1,T)\times \Omega(t)$, such that $\hat{z}^0=\hat{f}(\hat{z}^1,\hat{z}^2,\hat{z}^3)$ and this can only happen when $z\in\mathcal{D}_l$, this is a contradiction). On the other hand, for any point $x^\mu\in(0,T)\times \Omega(t)$, since $x^0=\hat{x}^0\cos\vartheta-\hat{x}^3\sin\vartheta>0$, we obtain $\hat{x}^0>\hat{x}^3\tan \vartheta$. Therefore, $\hat{x}^0>\mathfrak{F}(\hat{x}^1,\hat{x}^2,\hat{x}^3)$.

In the end, let us verify that $\mathfrak{F}$ is a Lipschitz function. We only focus on the case for two points $\hat{x}^i_1$ and $\hat{x}^i_2\in B(0,\delta)$, such that $\mathfrak{F}(\hat{x}^i_1)= \hat{f} (\hat{x}^i_1)$ and $\mathfrak{F}(\hat{x}^i_2)=\hat{x}^3_2\tan\vartheta$ (i.e., $\hat{f}(\hat{x}_1^1,\hat{x}_1^2,\hat{x}_1^3) \geq \hat{x}_1^3	\tan\vartheta$ and $\hat{f}(\hat{x}_2^1,\hat{x}_2^2,\hat{x}_2^3)\leq \hat{x}_2^3	\tan\vartheta$). For other cases, they are either similar to the above case or, by using $\hat{f}\in C^1$ and $\hat{x}^3 \tan\vartheta\in C^1$, we can obtain the Lipschitz inequality directly. Since $\hat{f}(\hat{x}^i)-\hat{x}^3\tan\vartheta$ is continuous and $\hat{f}(\hat{x}^i_1)-\hat{x}^3_1 \tan\vartheta\geq 0$,  $\hat{f}(\hat{x}^i_2)-\hat{x}^3_2\tan\vartheta \leq 0$, then there exists a point $\hat{x}^i_0 \in B(0,\delta)$, such that $\hat{f}(\hat{x}^i_0)-\hat{x}^3_0\tan\vartheta=0$, then
	\begin{align*}
		&|\mathfrak{F}(\hat{x}^i_1)-\mathfrak{F}(\hat{x}^i_2)|= |\hat{f} (\hat{x}^i_1)-\hat{x}^3_2\tan\vartheta|=|\hat{f} (\hat{x}^i_1)-\hat{f}(\hat{x}^i_0)+\hat{x}^3_0\tan\vartheta-\hat{x}^3_2\tan\vartheta| \notag  \\
		&\hspace{0.2cm}\leq |\hat{f} (\hat{x}^i_1)-\hat{f}(\hat{x}^i_0)|+|\hat{x}^3_0\tan\vartheta-\hat{x}^3_2\tan\vartheta|\leq  K(|\hat{x}_1^i-\hat{x}_0^i|+|\hat{x}_2^i-\hat{x}_0^i|) = K|\hat{x}_1^i-\hat{x}_2^i|
	\end{align*}
where $K:=\max\{|\tan\vartheta|,\|\nabla\hat{f}\|_{\Li}\}$.
Then we obtain $\mathfrak{F}$ is a Lipschitz function. This means $\mathcal{D}_l\cup\mathcal{D}_b$ is a Lipschitz boundary. Furthermore, the similar proofs arrive at $(\mathcal{D}_l\cup\mathcal{D}_b)\cup\mathcal{D}_t$ is of Lipschitz as well, then we complete the proof.

\section{Tools of geometry}\label{s:surf}

\subsection{The regularity of boundary and the geometric description}

Let us first recall the definition of  regularities of the boundary. See, for example,  \cite{Barb2009}.

\begin{definition}\label{t:bdryreg}
	For a proper, non-empty, open subset $\Omega\subset \Rbb^n$, we say the boundary \textbf{$\del{}\Omega$ is of class $C^k$ (or Lipschitz) locally at a point $\check{\bx} \in \del{}\Omega$}, if it can be expressed as a graph of a $C^k$ (or Lipschitz) function locally at a neighborhood of $\check{\bx}$, that is,
	at the point $\check{\bx} \in \del{}\Omega$, there is  a composition of a rotation and a translation (i.e., rigid map) $T:\Rbb^n\rightarrow \Rbb^n$ with $T(\check{\bx})=0$, and a neighborhood $\mathcal{C}:=B_d(0)\times(-h,h)\subset \Rbb^n$ where constants $d,h>0$, along with a $C^k$ (or Lipschitz) function $\phi:\Rbb^{n-1}\supset \overline{B_d(0)}\rightarrow (-h,h)$, such that $\phi(0)=0$, $\nabla \phi(0)=0$, and
	\begin{align*}
		\mathcal{C}\cap T(\Omega )=&\{(x^\prime,x^n)\in \mathcal{C}\;|\;x^n<\phi(x^\prime)\};  \\
		\mathcal{C}\cap T(\del{}\Omega)=&\{(x^\prime,x^n)\in \mathcal{C} \;|\;x^n=\phi(x^\prime)\}.
	\end{align*}
	Furthermore, we say the boundary \textit{$\del{}\Omega$ is of class $C^k$ (or Lipschitz)} or $\Omega$ is a $C^k$ (or Lipschitz) domain, if  $\del{}\Omega$ is of class $C^k$ (or Lipschitz) locally at every point $\check{\bx} \in \del{}\Omega$, and we simply denote this by $\del{}\Omega\in C^k$ (or $\del{}\Omega\in \text{Lip}$).
\end{definition}

Another important domain we will mention is the one with Calder\'{o}n's uniform cone condition. This domain has a good extension property (see \S\ref{s:extention}) allowing us to obtain the expected extension.
\begin{definition}[Calder\'{o}n's uniform cone condition (see \cite{Calderon1961,Adams2003b})]\label{t:conecd}
	A domain $\Omega\subset \Rbb^n$ satisfies \textbf{Calder\'{o}n's uniform cone  condition} if there exists a positive constant $\epsilon>0$, a finite open cover  $\{U_j \;|\; j=1,2\cdots \ell\}$ of $\del{}\Omega$ and a corresponding finitely many sequence $\{C_j \}$ of finite cones, such that
	\begin{enumerate}
		\item every point of $\del{}\Omega$ is the center of a sphere of radius $\epsilon$ entirely contained in one of these sets $U_j$;
		\item every point of $U_j\cap\Omega$ is the vertex of a translate of $C_j$ entirely contained in $\Omega$.
	\end{enumerate}
\end{definition}

From these definitions, we obtain
\begin{lemma}\label{t:dm1}
	If $\Omega$ is a bounded domain and $\del{}\Omega$ is a $C^1$ boundary, then it is a Lipschitz boundary and also satisfies  Calder\'{o}n's uniform cone condition.
\end{lemma}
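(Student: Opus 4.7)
The plan is to decouple the two assertions and reduce both to the local graph representation of Definition \ref{t:bdryreg}, using continuity of $\nabla\phi$ and compactness of $\partial\Omega$. The first assertion is essentially a pointwise statement that follows from $C^1 \Rightarrow$ locally Lipschitz; the second requires a quantitative construction of a cone that fits inside $\Omega$ from each nearby interior point, followed by a finite-cover argument to choose $\epsilon$ and the cones uniformly.

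For the first assertion, I would fix $\check{\bx}\in\partial\Omega$ and apply Definition \ref{t:bdryreg} with $k=1$ to obtain the rigid map $T$, the cylinder $\mathcal{C}=B_d(0)\times(-h,h)$, and the $C^1$ function $\phi\colon\overline{B_d(0)}\to(-h,h)$ with $\phi(0)=0$, $\nabla\phi(0)=0$. Since $\nabla\phi$ is continuous on the compact set $\overline{B_{d/2}(0)}$, it is bounded there by some $L=L(\check{\bx})<\infty$, so by the mean value theorem $\phi$ is $L$-Lipschitz on $\overline{B_{d/2}(0)}$. Shrinking the neighborhood from $\mathcal{C}$ to $B_{d/2}(0)\times(-h,h)$ therefore gives a Lipschitz graph representation at $\check{\bx}$ in the sense of Definition \ref{t:bdryreg}. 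Since $\check{\bx}$ was arbitrary, $\partial\Omega$ is Lipschitz.

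For the second assertion, the key quantitative step is: at each $\check{\bx}\in\partial\Omega$, since $\nabla\phi$ is continuous and $\nabla\phi(0)=0$, I can shrink $d$ so that $|\nabla\phi(x')|<1/2$ for all $x'\in\overline{B_d(0)}$, which makes $\phi$ Lipschitz with constant $1/2$ there. I then propose the cone $C_{\check{\bx}}:=\{(v',v^n)\in\Rbb^n\mid v^n<0,\ |v'|<-v^n/4\}$ truncated at height $h/2$, i.e.\ aperture parameter $\tan\theta=1/4<1/(1/2)$, pointing in the $-e_n$ direction after the rigid map $T$. To verify that for every $p\in(B_{d/4}(0)\times(-h/2,h/2))\cap T(\Omega)$ the translate $p+C_{\check{\bx}}$ lies in $T(\Omega)\cap\mathcal{C}$, I would take $q=p+v\in p+C_{\check{\bx}}$ and bound
\[
q^n=p^n+v^n<\phi(p')+v^n\le\phi(q')+\tfrac{1}{2}|v'|+v^n<\phi(q')+\tfrac{1}{2}\cdot\bigl(-\tfrac{v^n}{4}\bigr)+v^n<\phi(q'),
\]
which places $q$ strictly below the graph, i.e.\ in $T(\Omega)$; the truncation at height $h/2$ keeps $q$ inside $\mathcal{C}$. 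This supplies a local neighborhood $U_{\check{\bx}}:=T^{-1}(B_{d/4}(0)\times(-h/2,h/2))$ and a cone $C_{\check{\bx}}$ (suitably rotated back by $T^{-1}$) meeting condition (2) of Definition \ref{t:conecd}.

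Finally, uniformity comes from compactness: since $\Omega$ is bounded, $\partial\Omega$ is compact, so the open cover $\{U_{\check{\bx}}\}_{\check{\bx}\in\partial\Omega}$ admits a finite subcover $\{U_j\}_{j=1}^{\ell}$ with associated cones $C_j$. I choose $\epsilon>0$ smaller than a Lebesgue number of this cover (so that every boundary point is the center of a ball of radius $\epsilon$ contained in some $U_j$), verifying condition (1). The main obstacle I anticipate is bookkeeping in the cone-fits-inside-$\Omega$ estimate—in particular making the inward direction of the cone consistent with Definition \ref{t:bdryreg}'s convention that $\Omega$ lies where $x^n<\phi(x')$, and ensuring that the truncation height is chosen small enough that the cone, when translated from an interior point near the boundary, does not leave the cylinder $\mathcal{C}$ where the graph representation is valid. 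Aside from this geometric check, both assertions reduce to standard continuity and compactness arguments.
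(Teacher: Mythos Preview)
Your proposal is correct and follows essentially the same approach as the paper: use that a $C^1$ function on a compact set has bounded gradient (hence is Lipschitz), construct cones whose aperture is controlled by the gradient bound and whose height is controlled by the cylinder dimensions, and pass to a finite subcover by compactness of $\partial\Omega$. The paper's version is terser---it simply names the cone angle as $\min\{\arctan(1/|\phi'|),\arctan(h/d)\}$ and asserts the cone stays inside, whereas you first shrink $d$ to force $|\nabla\phi|<1/2$, fix the aperture at $\arctan(1/4)$, and write out the graph inequality explicitly; but this is a difference in level of detail, not in strategy.
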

\begin{proof}
	For a bounded domain $\Omega$, there is a finite open cover $U_j$ of $\del{}\Omega$ which can be rigidly moved to $\mathcal{C}=B_d(0)\times(-h,h)$ for some constant $d,h>0$. If $\del{}\Omega$ is $C^1$, then there is a $C^1$ function $\phi$ satisfies the requirements of the definition and it is direct that this function $\phi$ is a Lipschitz function on $\overline{B_d(0)}$, which in turn implies this domain is a Lipschitz domain. Since there is a finite open cover in the form of $\mathcal{C}=B_d(0)\times(-h,h)$, we can take $\epsilon$ small enough to ensure Definition \ref{t:conecd}.$(1)$ holds. For every point of $U_j\cap\Omega$ (for simplicity, we have been a little vague here. To be more precise, one should shrink $h$ and $d$ a half and complement finite open sets in this form if necessary to make sure the cone all in $\Omega$), by taking the height of the cone to be smaller than $h/2$ and the angle of the vertex to be $\min\{\arctan\frac{1}{|\phi^\prime|}, \arctan\frac{h}{d}\}>0$ where $\phi^\prime\in C^0(\overline{B_d(0)})$, Definition \ref{t:conecd}.$(2)$ holds. Then it satisfies  Calder\'{o}n's uniform cone condition.
\end{proof}

Due to the implicit function theorem, the following theorem gives $C^k$ boundary another description in terms of $C^k$ submanifold. We highlight this equivalence only holds for $k\geq1$ because of the implicit function theorem. Therefore, this excludes the equivalence between the Lipschitz boundary and submanifold (see \cite{Gilbarg1983,Grisvard2011}).

\begin{theorem}\label{t:bdsbmf}
	Let $\Omega$ be a nonempty, open, bounded subset of $\Rbb^n$, $n \geq 2$, and assume
	that $k\in \mathbb{N}_{\geq1}\cup \{\infty\}$. Then $\Omega$ is a $C^k$ domain if and only if for every point $\check{\bx}\in \del{}\Omega$ there
	exists an open neighborhood $U$ of   $\check{\bx}$ in $\Rbb^n$, $r>0$, and a $C^k$ diffeomorphism
	\begin{align*}
	\psi=(\psi_1,\cdots,\psi_n):U\rightarrow B(0,r),
	\end{align*}
	for which $\psi(\check{\bx})=0$ and which satisfies
	\begin{align*}
	\psi(\Omega\cap U)=& B(0,r)\cap \Rbb^n_+, \\
	\psi(\overline{\Omega}^\mathbf{c}\cap U)=& B(0,r)\cap \Rbb^n_-, \\
	\psi(\del{}\Omega\cap U)=& B(0,r)\cap \del{} \Rbb^n_+.
	\end{align*}
\end{theorem}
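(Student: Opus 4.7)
The strategy is to handle the two implications of this biconditional separately, translating between the "graph over a tangent hyperplane" description of Definition \ref{t:bdryreg} and the "flattening diffeomorphism" description of the theorem, with the implicit function theorem as the bridge (which is precisely the reason one needs $k\ge 1$ rather than merely Lipschitz).

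\emph{Direction ($\Rightarrow$): graph implies flattening chart.} Fix $\check{\bx}\in\del{}\Omega$ and apply Definition \ref{t:bdryreg} to produce a rigid motion $T$ with $T(\check{\bx})=0$, a box $\mathcal{C}=B(0,d)\times(-h,h)$, and a $C^k$ function $\phi$ with $\phi(0)=0$, $\nabla\phi(0)=0$, such that $T(\Omega)\cap\mathcal{C}=\{x_n<\phi(x')\}$. Define $\Psi:\mathcal{C}\to\Rbb^n$ by $\Psi(x',x_n):=(x',\phi(x')-x_n)$. Then $\Psi$ is $C^k$, its Jacobian determinant equals $-1$ at every point, so $\Psi$ is a $C^k$ diffeomorphism onto its image; composing with $T$ and restricting to a small enough ball $B(0,r)$ inside $\Psi(\mathcal{C})$, and setting $U:=T^{-1}(\Psi^{-1}(B(0,r)))$, the map $\psi:=\Psi\circ T$ satisfies $\psi(\check{\bx})=0$ and sends $\Omega\cap U$, $\del{}\Omega\cap U$, $\overline{\Omega}^{\mathsf{c}}\cap U$ to the upper half, boundary hyperplane, and lower half of $B(0,r)$, respectively, by a direct computation using the sign of $\phi(x')-x_n$.

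\emph{Direction ($\Leftarrow$): flattening chart implies graph.} Given a $C^k$ diffeomorphism $\psi:U\to B(0,r)$ with the three stated properties, the key observation is that $D\psi(\check{\bx})$ is invertible, hence $\nabla\psi_n(\check{\bx})\ne 0$. After composing with a rotation in $\Rbb^n$ (which is itself a $C^k$ diffeomorphism and does not disturb the three inclusions), we may assume $\del{n}\psi_n(\check{\bx})\ne 0$. The implicit function theorem applied to $\psi_n(x',x_n)=0$ at $\check{\bx}$ then produces a $C^k$ function $\phi$ defined on a small ball $B(\check{\bx}',d)\subset\Rbb^{n-1}$ such that, in a neighborhood of $\check{\bx}$, one has $\psi_n(x',x_n)=0$ iff $x_n=\phi(x')$. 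After a further rigid motion sending $\check{\bx}$ to $0$ and rotating in the first $n-1$ coordinates so that the tangent hyperplane $\{x_n=0\}$ coincides with $\ker D\phi(0)$ (which normalizes $\phi(0)=0$ and $\nabla\phi(0)=0$), we recover the graph representation required by Definition \ref{t:bdryreg}. Finally one checks that $\Omega$ lies on the correct side of the graph: by continuity, $\psi_n$ has constant sign on each of the two connected components cut out by the graph in a small box, and the inclusions $\psi(\Omega\cap U)\subset\Rbb^n_+$ and $\psi(\overline{\Omega}^{\mathsf{c}}\cap U)\subset\Rbb^n_-$ force $\Omega$ to be exactly the side where $\psi_n>0$, so after possibly replacing $\phi$ by $-\phi$ (and adjusting the rotation) the local description of $T(\Omega)\cap\mathcal{C}$ as $\{x_n<\phi(x')\}$ holds.

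\emph{Main obstacle.} The only delicate point is the reverse direction: one must simultaneously (i) invoke the implicit function theorem to produce the defining function $\phi$, (ii) perform the rotation that makes $\nabla\phi(0)=0$ (so that the tangent plane is horizontal, as Definition \ref{t:bdryreg} demands) while still preserving all three inclusions $\psi(\Omega\cap U)\subset\Rbb^n_+$ etc., and (iii) reconcile the side condition $x_n<\phi(x')$ of the definition with the sign convention $\Rbb^n_+=\{x_n>0\}$ of the theorem, which is a bookkeeping issue with signs and orientations rather than a substantive difficulty. The appeal to the implicit function theorem is exactly what breaks down at the Lipschitz level, explaining the hypothesis $k\ge 1$.
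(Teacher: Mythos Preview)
Your proposal is correct and follows the standard route to this classical equivalence. The paper itself does not give a proof; it merely cites \cite[Theorem $4.6.8$]{Barb2009} and \cite{Gilbarg1983,Grisvard2011}, so your argument is in fact more detailed than what appears in the paper, and matches the approach in those references.
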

\begin{proof}
	See, for example, \cite[Theorem $4.6.8$]{Barb2009} or \cite{Gilbarg1983,Grisvard2011}.
\end{proof}

\subsection{The Sard's Theorem and  $C^\infty$-approximation lemma of boundary}\label{s:sard}
For readers' convenience, we cite the famous Sard’s theorem here, and more information can be found in, for instance, \cite{Lee2012}. We will use it to prove a useful lemma of $C^\infty$-approximation of the boundary (see John Lee's answer in \cite{orr}).
\begin{theorem}[The Sard’s theorem]\label{t:sard}
	Suppose $M$ and $N$ are smooth manifolds with
	or without boundaries and $F: M \rightarrow  N$ is a smooth map. Then the set of critical values of $F$ has measure zero in $N$.
\end{theorem}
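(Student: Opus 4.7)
The plan is to reduce the statement to the Euclidean setting via a countable cover by coordinate charts and then to prove the Euclidean version by induction on the domain dimension. Since smooth manifolds (in the usual convention) are second countable, they admit countable atlases; as countable unions of measure-zero sets are measure zero, and since smooth charts preserve the class of measure-zero sets in the codomain, it suffices to prove: for any open $U\subset \mathbb{R}^m$ and smooth $f:U\to\mathbb{R}^n$, the image $f(C)$ of the critical set $C=\{x\in U:\mathrm{rank}\,Df(x)<n\}$ has Lebesgue measure zero in $\mathbb{R}^n$. The boundary case adds only a straightforward adjustment: extend smoothly across the boundary using a collar, or simply double the manifold, so the interior result transfers.

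I would then do induction on $m$. The case $m=0$ is trivial. For the inductive step I would introduce the nested filtration $C\supset C_1\supset C_2\supset\cdots$, where $C_k$ denotes the points at which all partial derivatives of $f$ of order $\leq k$ vanish, and decompose $f(C)$ as
\begin{equation*}
 f(C)=f(C\setminus C_1)\;\cup\;\bigcup_{k\geq 1} f(C_k\setminus C_{k+1})\;\cup\; f(C_K)
\end{equation*}
for a large $K$ to be chosen. For $f(C\setminus C_1)$, near a point where some first-order partial of $f$ is nonzero, an implicit-function-theorem change of coordinates makes $f$ preserve the first coordinate, i.e.\ $f(x_1,y)=(x_1,g(x_1,y))$. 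Slicing by $x_1$, the induction hypothesis applied to each slice $g(c,\cdot)$ together with Fubini yields measure zero. For each $f(C_k\setminus C_{k+1})$ with $k\geq 1$, a nonvanishing $(k{+}1)$-th partial is the first-order partial of some $k$-th partial that vanishes on $C_k$; one again straightens coordinates and reduces to a lower-dimensional problem to which induction (or the previous step) applies.

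The remaining and genuinely analytic piece is the residual set $f(C_K)$. The plan is to cover an arbitrary compact cube $Q\subset U$ by sub-cubes of side $\delta$, and on any sub-cube meeting $C_K$ apply Taylor's theorem: because all derivatives of order $\leq K$ vanish at the base point, $|f(x)-f(y)|\leq M|x-y|^{K+1}$ on the sub-cube, so its image has diameter $O(\delta^{K+1})$ and hence $n$-measure $O(\delta^{(K+1)n})$. Summing over the $O(\delta^{-m})$ sub-cubes gives a bound of order $\delta^{(K+1)n-m}$; choosing $K$ with $(K+1)n>m$ (say $K\geq m/n$) forces this to $0$ as $\delta\to 0$. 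Exhausting $U$ by countably many compact cubes finishes the proof.

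The main obstacle, and indeed the only substantive estimate, is this last Taylor/counting argument: one must marshal a uniform Taylor remainder on compact subsets and then pick $K$ to beat the combinatorial factor $\delta^{-m}$. Everything else — the passage from manifolds to $\mathbb{R}^m$, the handling of boundary, and the inductive reductions in the first two parts — amounts to careful bookkeeping with the implicit function theorem, Fubini, and the countability of the atlas.
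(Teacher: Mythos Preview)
Your proposal is correct and follows the classical Milnor--Morse proof of Sard's theorem: reduction to the Euclidean case via a countable atlas, the filtration $C\supset C_1\supset C_2\supset\cdots$ by order of vanishing, the inductive handling of $C\setminus C_1$ and $C_k\setminus C_{k+1}$ via the implicit function theorem and Fubini, and the Taylor-remainder counting argument for $C_K$ with $K$ large enough that $(K+1)n>m$. The boundary case via collars or doubling is also standard.

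The paper, however, does not prove Sard's theorem at all: it is stated in an appendix purely for the reader's convenience, with a citation to Lee's \emph{Introduction to Smooth Manifolds}, and is then invoked to prove the $C^\infty$-approximation lemma for boundaries (Lemma~\ref{t:Cinfapx}). So there is nothing to compare on the level of argument; you have supplied a full and correct proof where the paper simply quotes the result as known.
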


\begin{lemma}[The $C^\infty$-approximation of Boundary]\label{t:Cinfapx}
	Suppose $\Omega$ is an open set in a smooth $n$ dimensional manifold $M$, then for any small constant $\epsilon>0$, there is an open set $\mathcal{D}$ with $C^\infty$ boundary, such that $\Omega\setminus \Omega_\epsilon \subset \mathcal{D}\subset \Omega$ where $\Omega_\epsilon$ is defined by \eqref{e:omgep} (see \S\ref{s:sets}).
\end{lemma}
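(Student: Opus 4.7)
The plan is to reduce the problem to Sard's theorem applied to a smooth approximation of the distance function to $\partial\Omega$. First I would set $d(x):=\dist(x,\partial\Omega)$ for $x\in\overline{\Omega}$ (using the Euclidean distance when $M=\Rbb^n$, and a fixed auxiliary Riemannian metric in the general setting), extended by $0$ off $\overline{\Omega}$. This $d$ is continuous and $1$-Lipschitz but not smooth. Using a standard mollification (convolution with a smooth bump of radius much smaller than $\epsilon$, glued together by a locally finite partition of unity subordinate to coordinate charts), one can produce $f\in C^\infty(M)$ with the uniform estimate
\begin{equation*}
|f(x)-d(x)|<\epsilon/3 \qquad \text{for every } x\in M.
\end{equation*}

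Next I would apply Sard's theorem (Theorem \ref{t:sard}) to $f\colon M\to\Rbb$: the set of critical values has measure zero in $\Rbb$, so the open interval $(\epsilon/3,2\epsilon/3)$ contains some regular value $c$. Setting $\mathcal{D}:=\{x\in M : f(x)>c\}$, the regular value theorem guarantees that $f^{-1}(c)$ is a smooth embedded hypersurface, and a standard argument identifies it with $\partial\mathcal{D}$; hence $\mathcal{D}$ has $C^\infty$ boundary. The inclusions would then be verified directly: if $x\in\Omega\setminus\Omega_\epsilon$ then $d(x)\geq\epsilon$, so $f(x)>d(x)-\epsilon/3\geq 2\epsilon/3>c$ and $x\in\mathcal{D}$; if $x\in\mathcal{D}$ then $f(x)>c>\epsilon/3$, so $d(x)>f(x)-\epsilon/3>0$, forcing $x\in\Omega$.

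The main obstacle is Step $1$, the construction of the smooth approximant $f$ with the required global uniform bound. On $\Rbb^n$ this is routine (convolve the truncated distance function with a standard mollifier $\eta_\delta$ for $\delta\ll\epsilon$, using the $1$-Lipschitz property of $d$ to control $\|f-d\|_{L^\infty}$), which is the only case strictly needed in this paper. On a general smooth manifold the same strategy works after choosing a Riemannian metric and assembling local mollifications via a partition of unity, but one must track how the local errors combine; again the Lipschitz estimate on $d$ makes the bookkeeping straightforward. Once $f$ is in hand, the remaining steps are an immediate consequence of Sard's theorem and the implicit function theorem.
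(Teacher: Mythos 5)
Your proof is correct and gives the right conclusion, but it takes a genuinely different route from the paper's. Both proofs invoke Sard's theorem to produce a regular value, but the scalar function to which Sard is applied is constructed differently. The paper simply invokes the existence of a smooth bump function $f$ with $f\equiv 1$ on $\Omega\setminus\Omega_\epsilon$, $\supp f\subset\Omega$, and $0\le f\le 1$ (Lee's Proposition~2.25), then takes \emph{any} regular value $\delta\in(0,1)$ and sets $\mathcal{D}$ to be the superlevel set. No quantitative estimate is needed: the inclusions $\Omega\setminus\Omega_\epsilon\subset\{f>\delta\}\subset\supp f\subset\Omega$ are automatic from the qualitative properties of a bump function, and the whole interval $(0,1)$ consists of ``admissible'' regular values. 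You instead build a smooth approximant $f$ of the distance function $d=\dist(\cdot,\partial\Omega)$ via mollification, needing the uniform bound $\|f-d\|_{L^\infty}<\epsilon/3$, and must then locate a regular value in the specific window $(\epsilon/3,2\epsilon/3)$. This works (the $1$-Lipschitz property of $d$ is exactly what controls the mollification error, as you note), but it carries more bookkeeping, particularly on a general manifold where you must choose an auxiliary metric and assemble local mollifications by a partition of unity. The paper's use of a ready-made bump function sidesteps that bookkeeping entirely: it delegates all the gluing to the standard bump-function existence theorem, and it never needs to measure how far the function is from $d$. Your approach is more explicit and constructive, which could be an advantage if one wanted quantitative control on $\mathcal{D}$; the paper's is shorter and requires no estimates. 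One minor point in your write-up: strictly you should also observe that $f^{-1}(c)$ coincides with $\partial\mathcal{D}$ (a point $p$ with $f(p)=c$ and $c$ regular necessarily lies on $\partial\{f>c\}$, since $f$ takes values on both sides of $c$ arbitrarily close to $p$); you gesture at this via ``a standard argument'' and the implicit function theorem, which is fine, but it's worth stating.
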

\begin{proof}
	By the existence theorem of smooth bump functions (see \cite[Proposition 2.25]{Lee2012}), there is a smooth bump function $f:M\rightarrow \Rbb$ satisfying $f\equiv 1$ on $\Omega\setminus \Omega_\epsilon$, $\supp f \in\Omega$ and $0\leq f\leq 1$ on $M$. By using Sard's Theorem \ref{t:sard}, since the set of critical values of the bump function $f$ has measure zero, there always is a small enough regular value $\delta>0$ of $f$, such that $f^{-1}(\delta)$ forms the boundary of domain $\mathcal{D}:=f^{-1}([\delta,1])$ (see the regular level set theorem in \cite[Corollary 5.14]{Lee2012}) and $\Omega\setminus \Omega_\epsilon \subset \mathcal{D}\subset \Omega$. We have used  \cite[Proposition $5.47$]{Lee2012} to conclude $\mathcal{D}$ is a regular domain in $M$.
	
	We have to prove the boundary $\del{}\mathcal{D}$ is of class $C^\infty$ next. This is because $f(\bx)=\delta$ for every regular point, under some local coordinate, $\bx=(x^1,\cdots,x^{n-1}, x^n)\in\del{}\mathcal{D}$ and by letting $C^\infty\ni F(\bx):=f(\bx)-\delta$, the implicit function theorem implies there is a small neighborhood around every $\bx$, such that there is a function $\phi\in C^\infty$, $x^n=\phi(x^1,\cdots,x^{n-1})$. Then by Definition \ref{t:bdryreg}, with suitable rotations and translations, we conclude the boundary is smooth and further complete this proof.
\end{proof}

\subsection{The extension of the outward unit normal}\label{s:normal}
The following theorems come from \cite[Theorem $5.3.1$ and Lemma $4.6.18$]{Barb2009} which are useful tools used in the proof of the strong continuation criterion (i.e.,  Theorem \ref{t:conpri2}). We only cite the statements without proof.

\begin{proposition}[see Lemma $4.6.18$ in \cite{Barb2009}]\label{t:normal1}
	Suppose that $\Omega$ is a $C^k$ domain in $\Rbb^m$, $m\geq2$, for some $k \in \mathbb{N} \cup \{\infty\}$,
	$k\geq2$. Then the outward unit normal $\mathbf{n}$ is a function of class $C^{k-1}$.
	That is, there exists an open set $\mathcal{U} \subset \Rbb^m,$ which contains $\del{}\Omega$, along with a function
	$\mathbf{N}: \mathcal{U} \rightarrow \Rbb^m$ of class $C^{k-1}$ in $\mathcal{U}$ with the property that $\mathbf{N}|_{\del{}\Omega} = \mathbf{n}$.
\end{proposition}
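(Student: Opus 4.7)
The plan is to use a local-to-global construction: first straighten the boundary via the $C^k$ diffeomorphisms provided by Theorem \ref{t:bdsbmf} to build a smooth local extension of $\mathbf{n}$, then glue these local extensions with a partition of unity, and finally normalize. The main obstacle is orientation consistency: I must ensure that every local candidate vector field agrees with the \emph{outward} unit normal on $\partial\Omega$, since a partition-of-unity convex combination only behaves well if all the summands point ``in the same direction'' near each boundary point.

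First I would fix $\check{\mathbf{x}} \in \partial\Omega$ and invoke Theorem \ref{t:bdsbmf} to obtain a neighborhood $U \ni \check{\mathbf{x}}$ and a $C^k$ diffeomorphism $\psi=(\psi_1,\dots,\psi_m): U \to B(0,r)$ with $\psi(\Omega \cap U) = B(0,r) \cap \mathbb{R}^m_+$, $\psi(\partial\Omega \cap U) = B(0,r)\cap\partial\mathbb{R}^m_+$. Setting $F := \psi_m \in C^k(U)$, one has $F>0$ on $\Omega\cap U$, $F=0$ on $\partial\Omega\cap U$, and $\nabla F\neq 0$ throughout $U$ because the Jacobian of $\psi$ is nonsingular. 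Because $F$ increases into $\Omega$ along its normal direction, the vector field
\begin{equation*}
\mathbf{N}_U := -\frac{\nabla F}{|\nabla F|} \in C^{k-1}(U,\mathbb{R}^m)
\end{equation*}
coincides with the outward unit normal $\mathbf{n}$ on $\partial\Omega\cap U$. This is the step that pins down orientation once and for all.

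Next, using the paracompactness of $\partial\Omega$ (or compactness, when $\Omega$ is bounded), I would extract a locally finite cover $\{U_j\}_{j\in J}$ of $\partial\Omega$ with associated vector fields $\mathbf{N}_j \in C^{k-1}(U_j,\mathbb{R}^m)$. Adjoining $U_0 := \mathbb{R}^m\setminus \partial\Omega$ with $\mathbf{N}_0 \equiv 0$ and taking a subordinate smooth partition of unity $\{\chi_j\}$, define
\begin{equation*}
\tilde{\mathbf{N}} := \sum_{j} \chi_j \mathbf{N}_j \in C^{k-1}\Big(\bigcup_j U_j,\mathbb{R}^m\Big).
\end{equation*}
At any point $\mathbf{x}\in\partial\Omega$ only those $\chi_j$ with $j\neq 0$ contribute and each $\mathbf{N}_j(\mathbf{x})=\mathbf{n}(\mathbf{x})$, so $\tilde{\mathbf{N}}|_{\partial\Omega} = \mathbf{n}$. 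In particular $|\tilde{\mathbf{N}}|=1$ on $\partial\Omega$, and by continuity there is an open neighborhood $\mathcal{U} \supset \partial\Omega$ on which $|\tilde{\mathbf{N}}|\geq \tfrac{1}{2}$. Finally I set $\mathbf{N} := \tilde{\mathbf{N}}/|\tilde{\mathbf{N}}|$ on $\mathcal{U}$; this is $C^{k-1}$ because $\tilde{\mathbf{N}}$ is $C^{k-1}$ and $|\tilde{\mathbf{N}}|$ is bounded below by a positive constant (so the square-root and reciprocal preserve the $C^{k-1}$ class), and by construction $\mathbf{N}|_{\partial\Omega}=\mathbf{n}$, completing the proof.
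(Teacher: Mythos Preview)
Your argument is correct and is the standard proof of this result. Note, however, that the paper does not actually supply its own proof of this proposition: it is stated in the appendix as a tool borrowed from \cite[Lemma~4.6.18]{Barb2009}, with the explicit remark ``We only cite the statements without proofs.'' So there is no in-paper proof to compare against; your construction via local boundary-flattening charts (Theorem~\ref{t:bdsbmf}), orientation-consistent local normals $-\nabla\psi_m/|\nabla\psi_m|$, a partition-of-unity glue, and a final normalization on a neighborhood where $|\tilde{\mathbf N}|\geq\tfrac12$ is exactly the kind of argument the cited reference carries out, and every step you wrote is sound.
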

\begin{definition}
	Given an arbitrary set $\Omega\subset \Rbb^m$, the \textbf{signed distance} (to its boundary $\del{}\Omega$) is the function $d:\Rbb^m \rightarrow \Rbb$, defined by
	\begin{align*}
	d(\bx):=\begin{cases}
	+\dist(\bx,\del{}\Omega), \quad \text{if} \quad \bx\in\Omega  \\
	-\dist(\bx,\del{}\Omega), \quad \text{if} \quad \bx\in\Omega^\mathbf{c}
	\end{cases}
	\end{align*}
\end{definition}
\begin{theorem}[The distinguished extension of the outward unit normal of a domain]\label{t:normal2}
	Assume that $\Omega\subset \Rbb^m$, $m\geq 2$, is a domain of class $C^k$, for some $k\in \mathbb{N}\cup\{\infty\}$, $k\geq2$. Then, with $d$ denoting the signed distance, there exists $\mathcal{U}$ open neighborhood of $\del{}\Omega$ and a vector field
	\begin{equation*}
	\mathbf{N}=(N^1, \cdots,N^m): \mathcal{U} \rightarrow \Rbb^m, \quad \mathbf{N}(\bx):=(\nabla d)(\bx), \quad \text{for any} \quad \bx\in \mathcal{U},
	\end{equation*}
	is a vector-valued function of class $C^{k-1}$ in $\mathcal{U}$ which has the following properties:
	\begin{enumerate}
		\item $\|\mathbf{N}(\bx)\|=1$ for every $\bx\in \mathcal{U}$;
		\item $\mathbf{N}|_{\del{}\Omega}=\mathbf{n}$, the outward unit normal to $\Omega$;
		\item $\del{j}N_k=\del{k}N_j$ in $\mathcal{U}$, for all $j,k\in \{1,\cdots,m\}$;
		\item for every $j\in \{1,\cdots,m\}$, the directional derivative $D_\mathbf{N} N_j$ vanished in $\mathcal{U}$.
	\end{enumerate}
\end{theorem}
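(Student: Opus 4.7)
The plan is to prove the theorem by realizing $d$ as a $C^k$ function in a tubular neighborhood of $\del{}\Omega$ and then reading off the four properties directly from the eikonal identity $|\nabla d|^2 \equiv 1$ together with the symmetry of mixed partials. The starting observation is that, by Proposition \ref{t:normal1}, the outward unit normal $\mathbf{n}$ already extends to a $C^{k-1}$ field in some open neighborhood of $\del{}\Omega$. I will use this together with the implicit function theorem to construct the tubular neighborhood rather than invoke it as a black box.

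First I would define the normal exponential map $\Psi:\del{}\Omega \times (-\delta,\delta) \to \Rbb^m$ by $\Psi(\by,t) := \by + t\,\mathbf{n}(\by)$. Since $\del{}\Omega$ is a $C^k$ hypersurface and $\mathbf{n}\in C^{k-1}$ on it, the map $\Psi$ is $C^{k-1}$ jointly, and its differential at $(\by,0)$ is the direct-sum decomposition $T_\by(\del{}\Omega)\oplus\Rbb\,\mathbf{n}(\by)=\Rbb^m$, hence invertible. The inverse function theorem together with the compactness argument on $\del{}\Omega$ (used locally in each chart, then glued via a uniform tubular radius $\delta$) supplies an open neighborhood $\mathcal{U}$ of $\del{}\Omega$ on which $\Psi$ is a $C^{k-1}$ diffeomorphism onto its image. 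Writing $\Psi^{-1}(\bx)=(\pi(\bx),d(\bx))$, the second component coincides with the signed distance, and this representation yields immediately that $d\in C^{k-1}(\mathcal{U})$. To upgrade to $d\in C^k(\mathcal{U})$ (so that $\mathbf{N}=\nabla d\in C^{k-1}$), I would use the identity $d(\bx)=(\bx-\pi(\bx))\cdot \mathbf{n}(\pi(\bx))$ combined with the fact that $\bx-\pi(\bx)\perp T_{\pi(\bx)}(\del{}\Omega)$, so differentiating gives $\nabla d(\bx)=\mathbf{n}(\pi(\bx))$; since $\mathbf{n}\circ\pi\in C^{k-1}(\mathcal{U})$, this proves $d\in C^k(\mathcal{U})$ and simultaneously establishes property $(2)$ (taking $\bx\in\del{}\Omega$ gives $\pi(\bx)=\bx$, so $\mathbf{N}|_{\del{}\Omega}=\mathbf{n}$) as well as the key eikonal identity $(1)$ since $|\nabla d|=|\mathbf{n}\circ\pi|=1$.

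The remaining two properties are then algebraic consequences. Property $(3)$ is immediate: $\mathbf{N}=\nabla d$ is the gradient of a $C^k$ function with $k\ge 2$, so $\del{j}N_k=\del{j}\del{k}d=\del{k}\del{j}d=\del{k}N_j$ by Clairaut's theorem. For property $(4)$, differentiate the eikonal relation $\sum_k N_k N_k \equiv 1$ with respect to $x_j$ to obtain $\sum_k N_k\,\del{j}N_k=0$. Then, using $(3)$ to swap indices,
\begin{equation*}
D_{\mathbf{N}}N_j \;=\; \sum_k N_k\,\del{k}N_j \;=\; \sum_k N_k\,\del{j}N_k \;=\; \tfrac{1}{2}\del{j}\!\Bigl(\sum_k N_k^2\Bigr)\;=\;\tfrac{1}{2}\del{j}(1)\;=\;0 .
\end{equation*}

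The main obstacle I anticipate is the first step, namely proving that the signed distance is $C^k$ (not merely $C^{k-1}$) in a full neighborhood $\mathcal{U}$ of $\del{}\Omega$. The naive application of the implicit function theorem to $\Psi$ produces only $C^{k-1}$ regularity for $\Psi^{-1}$, which would give $d\in C^{k-1}$ and hence $\mathbf{N}\in C^{k-2}$, one derivative short of what is claimed. The trick (which I would spell out carefully) is that the projection $\pi$ onto $\del{}\Omega$ is $C^{k-1}$ but the specific combination $d(\bx)=(\bx-\pi(\bx))\cdot\mathbf{n}(\pi(\bx))$ gains a derivative because of the orthogonality $\bx-\pi(\bx)\perp T_{\pi(\bx)}(\del{}\Omega)$: when differentiating, the term involving $\nabla\pi$ is killed by this orthogonality, and only the $C^{k-1}$ field $\mathbf{n}\circ\pi$ survives, yielding the desired $\nabla d\in C^{k-1}$. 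Establishing this gain-of-regularity carefully is the one delicate calculation; everything else is bookkeeping.
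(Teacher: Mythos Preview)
The paper does not prove this theorem at all: the surrounding text in the appendix explicitly says ``We only cite the statements without proofs,'' and the result is simply quoted from an external reference. So there is no in-paper proof to compare against.

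Your argument is correct and is the standard route to this result. The tubular-neighborhood construction via $\Psi(\by,t)=\by+t\,\mathbf{n}(\by)$, the identification $\nabla d=\mathbf{n}\circ\pi$, and the derivations of (3) and (4) from Clairaut and the eikonal identity are all sound. You have correctly isolated the only genuinely delicate point, namely the one-derivative gain showing $d\in C^k$ rather than merely $C^{k-1}$. When you carry out that differentiation in full, note that \emph{two} terms involving $\partial_j\pi$ appear---one from $\partial_j$ hitting $\pi$ in the first factor and one from the chain rule on $\mathbf{n}(\pi(\bx))$---and both vanish: the first because $\partial_j\pi$ is tangent to $\del{}\Omega$ and hence orthogonal to $\mathbf{n}(\pi)$, the second because $\bx-\pi(\bx)=d(\bx)\,\mathbf{n}(\pi(\bx))$ and $\mathbf{n}\cdot D_v\mathbf{n}=\tfrac{1}{2}D_v|\mathbf{n}|^2=0$. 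With that made explicit your sketch becomes a complete proof.
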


The next proposition from \cite[Lemma $8.6$]{Lee2012} extends a smooth vector field from a closed subset to the whole manifold with a compact support.

\begin{proposition}[The global extension for vector fields]\label{t:normal3}
	Let $M$ be a smooth manifold
	with or without boundary, and let $\mathcal{N}\subset M$ be a closed subset. Suppose $\mathbf{V}$ is a smooth vector field along $\mathcal{N}$. Given any open subset $\mathcal{U}$ containing $\mathcal{N}$, there exists a smooth
	global vector field $\tilde{\mathbf{V}}$ on $M$ such that $\tilde{\mathbf{V}}|_{\mathcal{N}}=\mathbf{V}$ and $\supp\tilde{\mathbf{V}}\subset \mathcal{U}$.
\end{proposition}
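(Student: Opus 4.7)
The plan is to prove this by a standard partition-of-unity gluing argument, combining local smooth extensions of $\mathbf{V}$ near $\mathcal{N}$ with the zero vector field on $M\setminus\mathcal{N}$. The input is the definition of a smooth vector field along a subset: for each $p\in\mathcal{N}$ there is an open neighborhood $W_p\subset M$ of $p$ and a smooth vector field $\mathbf{V}_p\in\Gamma(TM|_{W_p})$ with $\mathbf{V}_p|_{W_p\cap\mathcal{N}}=\mathbf{V}|_{W_p\cap\mathcal{N}}$. I would shrink $W_p$ if necessary so that $W_p\subset\mathcal{U}$ (possible because $\mathcal{U}$ is open and contains $p$).

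Next, I would build an open cover of $M$ that "remembers" $\mathcal{N}$ versus its complement. Let $W_0:=M\setminus\mathcal{N}$, which is open because $\mathcal{N}$ is closed, and define $\mathbf{V}_0\equiv 0$ on $W_0$. Then $\{W_0\}\cup\{W_p\}_{p\in\mathcal{N}}$ is an open cover of $M$. Since $M$ is a smooth manifold (with or without boundary), it is paracompact and admits a smooth partition of unity $\{\psi_0\}\cup\{\psi_p\}_{p\in\mathcal{N}}$ subordinate to this cover, locally finite, with $\supp\psi_0\subset W_0$ and $\supp\psi_p\subset W_p$ for every $p$. Set
\begin{equation*}
\tilde{\mathbf{V}}:=\psi_0\,\mathbf{V}_0+\sum_{p\in\mathcal{N}}\psi_p\,\mathbf{V}_p=\sum_{p\in\mathcal{N}}\psi_p\,\mathbf{V}_p,
\end{equation*}
where each summand $\psi_p\,\mathbf{V}_p$ is extended by zero off $W_p$ (this extension is smooth because $\psi_p$ has support in $W_p$). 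Local finiteness of the cover guarantees that the sum is a locally finite sum of smooth vector fields, hence $\tilde{\mathbf{V}}\in\Gamma(TM)$ is smooth.

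It remains to verify the two pointwise properties. For $q\in\mathcal{N}$, we have $\psi_0(q)=0$ since $\supp\psi_0\subset W_0=M\setminus\mathcal{N}$; and for every $p$ with $\psi_p(q)\neq 0$, we have $q\in W_p\cap\mathcal{N}$, so $\mathbf{V}_p(q)=\mathbf{V}(q)$. Using $\sum_p\psi_p(q)+\psi_0(q)=1$, I obtain
\begin{equation*}
\tilde{\mathbf{V}}(q)=\sum_{p}\psi_p(q)\mathbf{V}(q)=\bigl(1-\psi_0(q)\bigr)\mathbf{V}(q)=\mathbf{V}(q),
\end{equation*}
so $\tilde{\mathbf{V}}|_{\mathcal{N}}=\mathbf{V}$. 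For the support condition, if $\tilde{\mathbf{V}}(q)\neq 0$ then some $\psi_p(q)\neq 0$, forcing $q\in\supp\psi_p\subset W_p\subset\mathcal{U}$, and local finiteness makes the set $\bigcup_p\supp\psi_p$ closed in $M$, so
\begin{equation*}
\supp\tilde{\mathbf{V}}\subset\bigcup_{p\in\mathcal{N}}\supp\psi_p\subset\mathcal{U}.
\end{equation*}

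The only subtle point, and the one I would want to handle carefully, is the local finiteness argument ensuring $\supp\tilde{\mathbf{V}}\subset\mathcal{U}$ (rather than merely $\supp\tilde{\mathbf{V}}\subset\overline{\mathcal{U}}$); apart from that, the construction is routine and is essentially the proof given in Lee's textbook, which is the source cited for the proposition. If smoothness of $\mathbf{V}$ along $\mathcal{N}$ were taken in a different (e.g.\ Whitney) sense, one could instead invoke the Whitney extension theorem, but with Lee's definition adopted here the partition-of-unity argument above suffices.
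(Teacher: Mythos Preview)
Your proof is correct and is essentially the standard partition-of-unity argument given in Lee's textbook; the paper itself does not supply a proof for this proposition but simply cites \cite[Lemma~8.6]{Lee2012}, so there is nothing further to compare.
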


\subsection{Flows and fundamental theorems}

\begin{theorem}[The diffeomorphism invariance of boundary] \label{t:bdrythm}
	Suppose $M$ and
	$N$ are $C^k$ ($k\geq 1$) manifolds with boundary and $F:M\rightarrow N$ is a $C^k$diffeomorphism. Then
	$F(\del{}M)=\del{}N$, and $F$ restricts to a $C^k$ diffeomorphism from $\text{Int} M$ to $\text{Int} N$.
\end{theorem}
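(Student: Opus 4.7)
My plan is to show the two set-theoretic inclusions $F(\operatorname{Int} M)\subset\operatorname{Int} N$ and $F(\partial M)\subset\partial N$, and then to deduce the remaining assertions by applying the first of these to the $C^k$ diffeomorphism $F^{-1}:N\to M$. The key local principle I intend to use is the following rigidity property of $C^k$ maps, which is a direct consequence of the inverse function theorem on $\mathbb{R}^n$: if $V\subset \mathbb{R}^n$ is open, $W\subset \mathbb{H}^n:=\{y\in\mathbb{R}^n\,|\,y^n\geq 0\}$ is relatively open, and $g:V\to W$ is a $C^k$ diffeomorphism with $k\geq 1$, then $g(V)$ is actually contained in $\operatorname{Int}\mathbb{H}^n=\{y^n>0\}$, i.e.\ it cannot touch $\partial\mathbb{H}^n$. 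Indeed, if $g(x_0)\in\partial\mathbb{H}^n$ for some $x_0\in V$, the inverse $g^{-1}$ extends by the chain rule applied near $g(x_0)$ to a map whose differential at $g(x_0)$ is invertible; the inverse function theorem for $C^k$ maps between open subsets of $\mathbb{R}^n$ would then produce an open neighborhood of $g(x_0)$ in $\mathbb{R}^n$ contained in $W\subset\mathbb{H}^n$, contradicting $g(x_0)\in\partial\mathbb{H}^n$. (This is really a local, coordinate-based statement, so no manifold machinery beyond the ordinary inverse function theorem is needed.)

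Using this, the first inclusion proceeds as follows. Pick $p\in\operatorname{Int} M$ and choose interior charts $(U,\varphi)$ around $p$ and $(V,\psi)$ around $F(p)$, where $\varphi(U)\subset\mathbb{R}^n$ is open, while $\psi(V)\subset\mathbb{H}^n$ is relatively open (we do \emph{not} yet know $F(p)\in\operatorname{Int} N$, so we must allow a boundary chart on the $N$-side). Shrinking $U$ so that $F(U)\subset V$, the composite $g:=\psi\circ F\circ\varphi^{-1}:\varphi(U)\to \psi(F(U))$ is a $C^k$ diffeomorphism from an open subset of $\mathbb{R}^n$ onto a relatively open subset of $\mathbb{H}^n$. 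The rigidity principle above forces $\psi(F(p))\in\operatorname{Int}\mathbb{H}^n$, hence $F(p)\in\operatorname{Int} N$ by Theorem~\ref{t:bdsbmf} (or by the definition of boundary points on a manifold with boundary).

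Applying the same argument to $F^{-1}:N\to M$, which is also a $C^k$ diffeomorphism, I obtain $F^{-1}(\operatorname{Int} N)\subset \operatorname{Int} M$; combined with the previous inclusion and bijectivity of $F$, this gives $F(\operatorname{Int} M)=\operatorname{Int} N$. Since $M=\operatorname{Int} M\sqcup\partial M$, $N=\operatorname{Int} N\sqcup\partial N$ (disjoint unions) and $F$ is a bijection, the complementary equality $F(\partial M)=\partial N$ follows automatically. Finally, $\operatorname{Int} M$ and $\operatorname{Int} N$ are open $C^k$ submanifolds of $M$ and $N$ (without boundary), and the restriction $F|_{\operatorname{Int} M}:\operatorname{Int} M\to\operatorname{Int} N$ is a bijection between them with $C^k$ inverse $F^{-1}|_{\operatorname{Int} N}$, both maps being $C^k$ simply because $F$ and $F^{-1}$ are $C^k$ on the ambient manifolds. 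The main obstacle is the rigidity lemma in the first paragraph; everything else is bookkeeping via charts and applying the argument to $F$ and to $F^{-1}$ symmetrically.
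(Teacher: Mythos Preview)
The paper does not actually prove this theorem; it is stated in Appendix~\ref{s:surf} as a standard fact from differential topology (the result is e.g.\ Theorem~2.18 in Lee's \emph{Introduction to Smooth Manifolds}) and used as a black box. Your argument is correct and is essentially the standard textbook proof.

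One small point worth tightening in your rigidity lemma: rather than speaking of ``extending $g^{-1}$ by the chain rule'', it is cleaner to apply the inverse function theorem directly to $g$. Since $g^{-1}:W\to V$ is $C^k$ on a half-space chart, by definition it admits a $C^k$ extension $\widetilde{g^{-1}}$ to an open set in $\mathbb{R}^n$; differentiating $\widetilde{g^{-1}}\circ g=\mathrm{id}_V$ shows $Dg(x_0)$ is invertible. The ordinary inverse function theorem then says $g$ carries a neighborhood of $x_0$ onto an \emph{open} neighborhood of $g(x_0)$ in $\mathbb{R}^n$, which cannot lie in $\mathbb{H}^n$ if $g(x_0)\in\partial\mathbb{H}^n$. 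This is what you intended, but the phrasing obscured which map the IFT is being applied to. Also, the appeal to Theorem~\ref{t:bdsbmf} is not quite apt, since that result is about domains in $\mathbb{R}^n$ rather than abstract manifolds with boundary; your parenthetical alternative (the chart definition of boundary points) is the correct justification.
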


%\subsubsection{Preliminaries on ordinary differential equations}\label{s:app1}
We denote $\mathbf{V}:=(V^1, \cdots, V^n)$ a continuous vector field and use a dot to denote an ordinary derivative with respect to $t$ in this Appendix.
Let us see the ordinary differential system,
\begin{align}
\dot{y}^i(t)= & V^i\bigl(t,y^1(t),\cdots, y^n(t)\bigr), \label{e:ode1} \\
y^i(t_0)=& c^i,  \label{e:ode2}
\end{align}
for $i=1,\cdots, n$, where $(t_0,\mathbf{c}):=(t_0, c^1,\cdots, c^n)$ is an arbitrary point. We first introduce an important Theorem \ref{t:flth2} without proofs, see \cite[Theorem D.$5$, D.$6$]{Lee2012} (or \cite[Theorem $1.11$]{Lang2002a}, \cite[Chapter XIV, \S $4$]{Lang1993}) for details.
\begin{theorem}\label{t:flth2}
	Let $J\subset \Rbb$ be an open interval and $U\subset \Rbb^n$ be an open subset, and let $\mathbf{V}:J\times U\rightarrow \Rbb^n$ be locally Lipschitz continuous and a $C^k$ vector-valued function for some $k\geq 0$.
	\begin{enumerate}
		\item \label{t:ex} (Existence) For any $s_0\in J$ and $\bx_0\in U$, there exists an open interval $J_0 \subset J$
		containing $s_0$ and an open subset $U_0\subset U$ containing $\bx_0$, such that for each $t_0\in J_0$ and $\mathbf{c}=(c^1, \cdots, c^n)\in U_0$, there is a $C^k$ map $y:J_0\rightarrow U$ that solves \eqref{e:ode1}--\eqref{e:ode2}.
		\item (Uniqueness) Any two differentiable solutions to \eqref{e:ode1}--\eqref{e:ode2} agree on their
		common domain.
		\item (Regularity) Let $J_0$ and $U_0$ be as in \eqref{t:ex}, and define a map $\theta:J_0\times J_0 \times U_0 \rightarrow U$ by letting $\theta(t,t_0,\mathbf{c})=y(t)$, where $y:J_0\rightarrow U$ is the unique solution
		to \eqref{e:ode1}--\eqref{e:ode2}. Then $\theta$ is of class $C^k$.
	\end{enumerate}
\end{theorem}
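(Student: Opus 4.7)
The plan is to establish (1) and (2) via Banach's fixed point theorem applied to the integral reformulation of \eqref{e:ode1}--\eqref{e:ode2}, and (3) by induction on $k$ using the variational equations, bootstrapping from continuous dependence to $C^k$ dependence of the flow.

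For existence, I would first select compact neighborhoods: choose $a, b > 0$ with $R := [s_0-a, s_0+a] \times \overline{B(\bx_0, b)} \subset J \times U$, set $M := \sup_R \|\mathbf{V}\|$, and let $L$ be a spatial Lipschitz constant for $\mathbf{V}$ on $R$ (finite since $\mathbf{V}$ is locally Lipschitz and $R$ is compact). Then pick $r, \delta > 0$ with $r + 2\delta M \leq b$ and $2\delta L < 1$, and define $J_0 := (s_0-\delta, s_0+\delta)$ and $U_0 := B(\bx_0, r)$. On the complete metric space $X$ of continuous maps $y : \overline{J_0} \to \overline{B(\bx_0, b)}$ equipped with the sup norm, the Picard operator
\begin{equation*}
(T_{t_0, \mathbf{c}} y)(t) := \mathbf{c} + \int_{t_0}^{t} \mathbf{V}(\tau, y(\tau))\, d\tau
\end{equation*}
maps $X$ into itself (the constraint $r + 2\delta M \leq b$ keeps the image in $\overline{B(\bx_0, b)}$) and is a contraction with ratio $\leq 2\delta L < 1$, uniformly in $(t_0, \mathbf{c}) \in J_0 \times U_0$. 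Its unique fixed point, restricted to $J_0$, satisfies the integral equation, hence by the fundamental theorem of calculus is $C^1$ and solves \eqref{e:ode1}--\eqref{e:ode2}. Iterated differentiation of the ODE via the chain rule then upgrades the solution to $C^{k+1}(J_0)$, in particular to $C^k$.

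For uniqueness, if $y_1, y_2$ both solve \eqref{e:ode1}--\eqref{e:ode2} on a common open interval $I \ni t_0$, then on any compact subinterval $K \subset I$ on which both solutions lie in a common compact subset of $U$, the Lipschitz estimate yields
\begin{equation*}
|y_1(t) - y_2(t)| \leq L\Bigl|\int_{t_0}^{t} |y_1(\tau) - y_2(\tau)|\, d\tau\Bigr|,
\end{equation*}
and Gr\"onwall's inequality forces $y_1 \equiv y_2$ on $K$. A connectedness argument (the coincidence set $\{t \in I : y_1(t) = y_2(t)\}$ is nonempty, closed, and open in $I$ by the local version above) then extends equality to all of $I$.

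For regularity, continuity of $\theta$ on $J_0 \times J_0 \times U_0$ is another Gr\"onwall comparison of two Picard iterates with nearby data. The main obstacle is joint $C^k$ dependence. I would argue inductively: in the base case $k = 1$, the candidate Jacobian $Y(t, t_0, \mathbf{c}) := \partial_{\mathbf{c}} \theta$ should solve the first variational equation
\begin{equation*}
\dot{Y}(t) = (D_{\mathbf{x}} \mathbf{V})\bigl(t, \theta(t, t_0, \mathbf{c})\bigr)\, Y(t), \qquad Y(t_0) = I,
\end{equation*}
which admits a unique $C^0$ solution by the same fixed point argument applied to this linear ODE with continuous coefficients. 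The technical heart is rigorously identifying $Y$ with the actual partial derivative, i.e.\ showing $\theta(t, t_0, \mathbf{c} + h) - \theta(t, t_0, \mathbf{c}) - Y(t)h = o(|h|)$ uniformly in $t$; this is obtained by Gr\"onwall applied to the error, exploiting uniform continuity of $D_{\mathbf{x}} \mathbf{V}$ on $R$. Derivatives in $t$ and $t_0$ follow from $\partial_t \theta = \mathbf{V}(t, \theta)$ and from differentiating the identity $\theta(t_0, t_0, \mathbf{c}) \equiv \mathbf{c}$ to get $\partial_{t_0}\theta = -Y \cdot \mathbf{V}(t_0, \mathbf{c})$. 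For $k \geq 2$, the augmented pair $(\theta, Y)$ satisfies an ODE whose right-hand side is $C^{k-1}$ in its variables; applying the inductive hypothesis to this enlarged system and combining with the $k=1$ step gives joint $C^k$ regularity of $\theta$, which is the content of (3).
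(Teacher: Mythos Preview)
The paper does not actually prove this theorem; it is stated without proof, with a pointer to \cite[Theorem D.5, D.6]{Lee2012} and \cite{Lang2002a,Lang1993} for details. Your sketch is the standard Picard--Gr\"onwall--variational-equation argument, which is precisely what those references contain, so your approach is consistent with (indeed, fills in) what the paper only cites.
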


We have the fundamental theorem on time-dependent flows using the above Theorem \ref{t:flth2}. The proof can be found in \cite[Theorem $9.48$]{Lee2012} and \cite[IV, \S $1$]{Lang2002a}. 
\begin{theorem}[The fundamental theorem on time-dependent flows]\label{t:flowft}
	Let $M$ be
	a smooth manifold, let $\mathcal{I}\subset \Rbb$ be an open interval, and let $\mathbf{V}:\mathcal{I}\times M\rightarrow TM$ be
	a $C^k$ $(k\geq 1)$ time-dependent vector field on $M$. Then there exists an open subset $\mathcal{E}\subset \mathcal{I}\times\mathcal{I}\times M$ and a $C^k$ map $\varphi:\mathcal{E}\rightarrow M$ called the time-dependent flow of $\mathbf{V}$, with
	the following properties:
	\begin{enumerate}[(a)]
		\item For each $t_0\in \mathcal{I}$ and $p\in M$, the set $\mathcal{E}^{(t_0,p)}=\{t\in \mathcal{I}\;|\; (t,t_0,p)\in \mathcal{E}\}$ is an open
		interval containing $t_0$, and the curve $\varphi^{(t_0,p)}:\mathcal{E}^{(t_0,p)}\rightarrow M$ defined by
		$\varphi^{(t_0,p)}(t)=\varphi(t,t_0,p)$ is the unique maximal integral curve of $\mathbf{V}$ with initial
		condition $\varphi^{(t_0,p)}(t_0)=p$.
		\item If $t_1\in \mathcal{E}^{(t_0,p)}$ and $q=\varphi^{(t_0,p)}(t_1)$, then $\mathcal{E}^{(t_1,q)}=\mathcal{E}^{(t_0,p)}$ and $\varphi^{(t_1,q)}=\varphi^{(t_0,p)}$.
		\item For each $(t_1,t_0)\in \mathcal{I}\times \mathcal{I}$, the set $M_{t_1,t_0}=\{p\in M\;|\; (t_1,t_0,p)\in \mathcal{E}\}$ is open in
		$M$, and the map $\varphi^{(t_1,t_0)}:M_{t_1,t_0}\rightarrow M$ defined by $\varphi^{(t_1,t_0)}(p)=\varphi(t_1,t_0,p)$ is a
		diffeomorphism from $M_{t_1,t_0}$ onto $M_{t_0,t_1}$ with inverse $\varphi^{(t_0,t_1)}$.
		\item If $p\in M_{t_1,t_0}$ and $\varphi^{(t_1,t_0)}(p)\in M_{t_2,t_1}$, then $p\in M_{t_2,t_0}$ and
		$\varphi^{(t_2,t_1)}\circ \varphi^{(t_1,t_0)}(p)=\varphi^{(t_2,t_0)}(p)$.
	\end{enumerate}
\end{theorem}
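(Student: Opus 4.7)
The plan is to reduce the time-dependent setting to the autonomous one via the standard suspension trick and then invoke Theorem \ref{t:flth2}. The advantage is that all the hard analytic work (local existence, uniqueness, openness of the flow domain, and $C^k$ regularity) is already packaged in the autonomous ODE theorem, and properties (a)--(d) then reduce to bookkeeping under the projection $\pi_M:\mathcal{I}\times M\to M$.

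First I would form the product manifold $\widetilde{M}:=\mathcal{I}\times M$ and define on it the autonomous $C^k$ vector field $\widetilde{\mathbf{V}}(s,p):=\partial_s|_s+\mathbf{V}(s,p)$, where $\partial_s$ is the canonical coordinate field on $\mathcal{I}$. Since $\mathbf{V}$ is $C^k$ on $\mathcal{I}\times M$, so is $\widetilde{\mathbf{V}}$ on $\widetilde{M}$. Applying the local existence, uniqueness, and regularity assertions of Theorem \ref{t:flth2} in coordinate charts of $\widetilde{M}$, and patching via uniqueness, then yields a unique maximal flow $\widetilde{\varphi}:\widetilde{\mathcal{D}}\to\widetilde{M}$ with $\widetilde{\mathcal{D}}\subset\mathbb{R}\times\widetilde{M}$ open and $\widetilde{\varphi}\in C^k(\widetilde{\mathcal{D}})$. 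A short verification shows that the integral curve of $\widetilde{\mathbf{V}}$ through $(t_0,p)$ has the form $s\mapsto(t_0+s,\gamma(s))$, where $\gamma$ is the unique integral curve of $\mathbf{V}$ satisfying $\gamma(t_0)=p$, so the product structure is preserved and the ``time'' coordinate evolves trivially.

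Next I would define $\varphi(t,t_0,p):=\pi_M\circ\widetilde{\varphi}(t-t_0,(t_0,p))$ on the set $\mathcal{E}:=\{(t,t_0,p)\in\mathcal{I}\times\mathcal{I}\times M:(t-t_0,(t_0,p))\in\widetilde{\mathcal{D}}\}$. Openness of $\mathcal{E}$ and $C^k$ regularity of $\varphi$ follow from openness of $\widetilde{\mathcal{D}}$ and $C^k$ regularity of $\widetilde{\varphi}$, together with continuity of the shift map $(t,t_0,p)\mapsto(t-t_0,(t_0,p))$. Property (a) is exactly the maximality assertion for integral curves of $\widetilde{\mathbf{V}}$ projected to $M$. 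Properties (b) and (d) are the cocycle identities of the autonomous flow $\widetilde{\varphi}$ restated in time-dependent language: if $q=\varphi^{(t_0,p)}(t_1)$, then $\widetilde{\varphi}(s,(t_1,q))=\widetilde{\varphi}(s+t_1-t_0,(t_0,p))$ on overlapping domains, which immediately gives $\mathcal{E}^{(t_1,q)}=\mathcal{E}^{(t_0,p)}$, $\varphi^{(t_1,q)}=\varphi^{(t_0,p)}$, and $\varphi^{(t_2,t_1)}\circ\varphi^{(t_1,t_0)}=\varphi^{(t_2,t_0)}$. Property (c) then follows from (b) by reading the identity symmetrically in $(t_0,t_1)$ to obtain the diffeomorphism inverse $\varphi^{(t_0,t_1)}\circ\varphi^{(t_1,t_0)}=\mathrm{id}_{M_{t_1,t_0}}$.

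The main obstacle, though primarily technical, is the chart-patching argument needed to lift the local autonomous ODE theorem to a \emph{global} statement on the possibly non-Euclidean manifold $\widetilde{M}$. This requires combining uniqueness from Theorem \ref{t:flth2} with a Lindel\"of-type covering of each compact subinterval of a maximal integral curve so that local solutions in overlapping charts agree on overlaps and extend consistently. Once this patching is handled, the openness of the sets $M_{t_1,t_0}$ and the precise description of $\mathcal{E}$ become automatic, and the theorem reduces entirely to autonomous ODE machinery already available in Theorem \ref{t:flth2}.
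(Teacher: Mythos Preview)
Your proposal is correct and follows exactly the approach the paper intends: the paper does not prove this theorem directly but cites \cite[Theorem~9.48]{Lee2012}, and Lee's proof there is precisely the suspension trick you describe---lift $\mathbf{V}$ to the autonomous field $\partial_s+\mathbf{V}$ on $\mathcal{I}\times M$, invoke the autonomous flow theorem (whose analytic core is Theorem~\ref{t:flth2}), and project back. Your identification of the chart-patching step as the only real technical work is also accurate and matches how Lee handles it.
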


\section{Tools of analysis}\label{s:ana}

\subsection{Sobolev spaces}\label{s:Sobsp}
Let us briefly review the famous Sobolev spaces over a domain $\Omega \subset \Rbb^n$. In this article, we mainly use the Sobolev spaces $W^{m,p}$ for $m$ is a positive integer (for detailed introductions, see, for example, \cite{Adams2003b,Tartar2007}). We recall it in the following Definition \ref{t:Sob0}. For fractional order spaces, one can find them in \cite[Chapt. $7$]{Adams2003b} and \cite[Chapt. $34$]{Tartar2007} for details. There are various ways to define $W^{s,p}(\Omega)$ for $s\in \Rbb$, for example, \cite[Chapt. $34$]{Tartar2007} gives two definitions: the first one is defined by restricting functions in Sobolev spaces $W^{s,p}(\Rbb^3)$ to $\Omega$ and the second by interpolations of $W^{m,p}(\Omega)$ with positive integer $m$, and if $s=m$ is a positive integer,  $W^{s,p}(\Omega)$ reduces to the classical Sobolev spaces $W^{m,p}(\Omega)$. If $\Omega$ is a \textit{strong local Lipschitz domain} (see \cite[\S$7.69$]{Adams2003b} and \cite[Chapt. $34-36$]{Tartar2007}), then all the definitions are equivalent and have an equivalently intrinsic characterization (see \cite[\S$7.47$]{Adams2003b} and \cite[Lemma $35.2$ and $36.1$]{Tartar2007}  for details). Due to these equivalences, we do not  distinguish these definitions in this article but focus on the following Definition \ref{t:Sob0} of the classical Sobolev spaces. We only use the fractional order spaces in Appendix \ref{s:calculus} for some derivations of basic inequalities.
\begin{definition}\label{t:Sob0}
	For any positive integer $m$  and  $1  \leq p \leq \infty$, we consider
a vector space:
\begin{align*}
	W^{m,p}(\Omega):=\{u\in L^p(\Omega)\;|\; D^\alpha u\in L^p(\Omega) \; \text{for} \; 0\leq |\alpha|\leq m \}
\end{align*}
where $D^\alpha u$ is the weak partial derivative. It is a normed space equipped with the norm $\|\cdot\|_{W^{m,p}(\Omega)}$ defined by
\begin{align*}
	\|u\|_{W^{k,p}(\Omega)}:=\begin{cases}
		 \Bigl(\sum_{|\alpha|\leq k}\int_\Omega|D^\alpha u|^p dx \Bigr)^{\frac{1}{p}} \quad &  	(1\leq p<\infty),  \\
	 \sum_{|\alpha|\leq k} \text{ess sup}_\Omega|D^\alpha u| &  (p=\infty) .
	\end{cases}
\end{align*}
\end{definition}

\subsection{Calder\'{o}n extension theorem}\label{s:extention}
This section states an important tool for the existence theorem. First  simple $(m,p)$-extension operators are introduced for expressing the extension theorem. These definitions and theorems can be found in \cite{Calderon1961,Adams2003b}.
\begin{definition}[The simple $(m,p)$-extension operator]\label{t:extention}
	Let $\Omega$ be a domain in $\Rbb^n$. For given numbers $m$ and $p$, a linear operator $E:W^{m,p}(\Omega)\rightarrow W^{m,p}(\Rbb^n)$ is called a \textit{simple $(m,p)$-extension operator} for $\Omega$ if there is a constant $K=K(m,p)$ such that for every $u\in W^{m,p}(\Omega)$, the following conditions hold,
	\begin{enumerate}
	    \item $Eu(x)=u(x)$ a.e. in $\Omega$;
	    \item $\|Eu\|_{W^{m,p}(\Rbb^n)}\leq K\|u\|_{W^{m,p}(\Omega	)}$.	
	\end{enumerate}
\end{definition}

\begin{theorem}[Calder\'{o}n extension theorem]\label{t:Cdnext}
    Let $\Omega$ be a domain in $\Rbb^n$ satisfying Calder\'{o}n's uniform cone condition. Then for any $m\in\{1,2,\cdots\}$ and any $p$ satisfying $1<p<\infty$, there exists a simple $(m,p)$-extension operator $E=E(m,p)$ for $\Omega$.
\end{theorem}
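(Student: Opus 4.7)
The plan is to follow Calderón's classical construction: localize via a partition of unity adapted to the uniform-cone cover, on each local piece define a reflection-type extension by integrating against a kernel supported in the cone, and then derive the Sobolev bound from Calderón--Zygmund singular integral estimates. The cone condition is what enables the local construction; the restriction $1<p<\infty$ is what makes the final step go through.

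First I would adjoin an open set $U_0$ with $\overline{U_0}\subset\Omega$ so that $\{U_0,U_1,\dots,U_\ell\}$ is a finite open cover of $\overline{\Omega}$ (possible since $\partial\Omega$ is already covered by $U_1,\dots,U_\ell$ while $\Omega$ is open). Let $\{\phi_j\}_{j=0}^\ell$ be a smooth partition of unity subordinate to this cover with $\phi_j\in C_0^\infty$. For $u\in W^{m,p}(\Omega)$ set
\begin{equation*}
Eu \;:=\; \phi_0 u \;+\; \sum_{j=1}^\ell E_j(\phi_j u),
\end{equation*}
where $\phi_0 u$ is extended by zero outside $\Omega$ (valid since $\supp\phi_0$ is a compact subset of $\Omega$) and each $E_j$ is a local extension operator defined below. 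A Leibniz-rule computation shows that the $W^{m,p}(\Omega)$ norm of $\phi_j u$ is controlled by $\|u\|_{W^{m,p}(\Omega)}$ times a constant depending only on $\|\phi_j\|_{C^m}$, so the global bound will reduce to a uniform local bound.

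For fixed $j\ge 1$, after a rigid motion place the finite cone $C_j$ with vertex at the origin. By the cone condition, for every $x\in U_j\cap\Omega$ one has $x+C_j\subset\Omega$. Choose $\psi\in C_0^\infty(C_j)$ satisfying the moment conditions
\begin{equation*}
\int_{C_j}\psi(y)\,dy=1, \qquad \int_{C_j} y^\alpha\psi(y)\,dy=0 \quad\text{for } 1\le|\alpha|\le m-1,
\end{equation*}
and define, for $v$ supported in $U_j\cap\Omega$,
\begin{equation*}
(E_j v)(x) \;:=\; \int_{C_j}\psi(y)\,\tilde v(x+y)\,dy, \qquad x\in\Rbb^n,
\end{equation*}
where $\tilde v$ denotes $v$ extended by zero outside $\Omega$. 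The cone condition guarantees $E_j v(x)=v(x)$ for $x$ in $U_j\cap\Omega$ (by the normalization of $\psi$, modulo standard mollifier arguments), while the moment conditions ensure that the extension is matched across $\partial\Omega$ up to order $m$ in the sense relevant for Sobolev regularity; the support of $E_j v$ sits in a fixed bounded neighborhood of $\overline{U_j\cap\Omega}$.

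The main analytic step is the bound $\|\partial^\alpha E_j v\|_{L^p(\Rbb^n)}\le K\|v\|_{W^{m,p}(\Omega\cap U_j)}$ for $|\alpha|\le m$. Lower-order derivatives are handled trivially since $\psi\in C_0^\infty$. For $|\alpha|=m$ one integrates by parts inside the defining integral to redistribute derivatives between $\psi$ and $v$, rewriting $\partial^\alpha E_j v$ as a finite sum of convolutions of $D^\beta v$ ($|\beta|\le m$) against kernels built from derivatives of $\psi$. The borderline terms are convolutions against kernels homogeneous of degree $-n$ with vanishing angular mean over the cone, i.e.\ operators of Calderón--Zygmund type. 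The main obstacle, and the reason the theorem fails at the endpoints $p=1,\infty$, lies exactly here: $L^p$ boundedness of these singular integrals for $1<p<\infty$ is supplied by the classical Calderón--Zygmund theorem (via the Hörmander condition or the standard weak-$(1,1)$ plus interpolation argument). Summing the local bounds with Leibniz then yields $\|Eu\|_{W^{m,p}(\Rbb^n)}\le K\|u\|_{W^{m,p}(\Omega)}$, so $E$ is a simple $(m,p)$-extension operator in the sense of Definition \ref{t:extention}.
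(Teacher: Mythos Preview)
The paper does not give a proof of this theorem at all; it simply cites Adams and Calder\'on. Your sketch therefore has to stand on its own as an outline of Calder\'on's argument, and it has a genuine gap in the construction of the local operators $E_j$.

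With $\psi\in C_0^\infty(C_j)$ and $\int\psi=1$, the formula
\[
(E_j v)(x)=\int_{C_j}\psi(y)\,\tilde v(x+y)\,dy
\]
is just a weighted average of $v$ over the translated cone. For $x\in U_j\cap\Omega$ the cone condition does give $x+C_j\subset\Omega$, so the integrand sees only values of $v$; but the integral then equals an average of $v$, not $v(x)$ itself. The normalization $\int\psi=1$ forces equality only for constant $v$, and the higher moment conditions extend this only to polynomials of degree $\le m-1$. The phrase ``modulo standard mollifier arguments'' does not rescue this: a mollifier limit would collapse the operator to the identity on $\Omega$ and give nothing outside. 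Consequently $E_jv|_\Omega\neq v$ in general, so $E$ fails the defining property $Eu=u$ a.e.\ in $\Omega$.

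What Calder\'on actually does is different in a crucial way: he first establishes an \emph{exact} integral representation
\[
u(x)=\sum_{|\alpha|=m}\int_{C_j} K_\alpha(y)\,(D^\alpha u)(x+y)\,dy,
\]
valid for $x\in\Omega$, with kernels $K_\alpha$ homogeneous of degree $m-n$ and supported in the cone (this comes from Taylor's formula with integral remainder, averaged over cone directions). The extension is then defined by the \emph{same} formula with $D^\alpha u$ replaced by its zero extension. Because the representation is exact, the restriction to $\Omega$ is automatically $u$; and because only the top-order derivatives enter (and they lie in $L^p$), the $W^{m,p}(\Rbb^n)$ bound reduces to $L^p$ boundedness of convolution with the $K_\alpha$ and their derivatives, which is where Calder\'on--Zygmund theory and the restriction $1<p<\infty$ enter, exactly as you anticipated. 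Your sketch has the right global architecture (localization, cone-supported kernel, CZ estimates) but the wrong local formula; replacing the averaging by the representation formula fixes it.
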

\begin{proof}
	See \cite[Theorem $5.28$]{Adams2003b} or \cite{Calderon1961} for the construction of this extension and the detailed proof.
\end{proof}

\subsection{The Stein extension theorem}\label{s:extentionS}
If the domain $\Omega\subset \Rbb^n$ is more regular, for example, a Lipschitz domain (in fact, see \cite{Elias1970a}, it requires the boundary with a weaker condition, minimal smoothness condition), there is a better and universal extension operator given by E. Stein \cite[Chapter $VI$ \S$3$]{Elias1970a}

\begin{theorem}[The Stein extension theorem]\label{t:extentionS}
	Let $\Omega\subset \Rbb^n$ be a Lipschitz domain. Then there exists a linear operator $E:W^{m,p}(\Omega)\rightarrow W^{m,p}(\Rbb^n)$ satisfying for every $u\in W^{m,p}(\Omega)$,
	\begin{enumerate}
		\item $Eu(x)=u(x)$ a.e. in $\Omega$;
		\item there is a constant $K>0$ such that $\|Eu\|_{W^{m,p}(\Rbb^n)}\leq K\|u\|_{W^{m,p}(\Omega	)}$ for all $p\in[1,\infty]$ and $m\in\Zbb_{\geq 0}$.
	\end{enumerate}
\end{theorem}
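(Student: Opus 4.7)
The plan is to follow Stein's classical construction via regularized distance, which has the essential feature that a single operator $E$ works for every pair $(m,p)$ simultaneously. First I would use a finite open cover $\{U_i\}_{i=0}^{N}$ of $\overline{\Omega}$ where $U_0\subset\subset\Omega$ and each $U_i$ ($i\geq 1$) is a neighborhood of a point of $\partial\Omega$ in which $\partial\Omega$ coincides, after a rigid motion $T_i$, with the graph of a globally Lipschitz function $\varphi_i:\mathbb{R}^{n-1}\to\mathbb{R}$ with Lipschitz constant bounded by some $M<\infty$. Picking a subordinate partition of unity $\{\zeta_i\}$, I write $u=\sum_i\zeta_i u$ and extend each $\zeta_i u$ separately; the piece supported in $U_0$ is extended by zero, so the whole problem reduces to constructing an extension on a ``special Lipschitz domain'' $D=\{(x',x_n):x_n>\varphi(x')\}$.

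On such a $D$ I would first construct a regularized distance $\Delta\in C^\infty(\mathbb{R}^n\setminus\overline{D})$ via a Whitney-type sum, satisfying $c_1\dist(x,\overline{D})\leq\Delta(x)\leq c_2\dist(x,\overline{D})$ and $|\partial^\alpha\Delta(x)|\leq C_\alpha\dist(x,\overline{D})^{1-|\alpha|}$ for every multi-index $\alpha$. Next I would fix a continuous function $\psi:[1,\infty)\to\mathbb{R}$ with rapid decay at infinity satisfying the moment conditions
\begin{equation*}
\int_1^\infty\psi(\lambda)\,d\lambda=1\qquad\text{and}\qquad\int_1^\infty\lambda^k\psi(\lambda)\,d\lambda=0\quad\text{for all }k\in\mathbb{Z}_{\geq 1};
\end{equation*}
such a $\psi$ exists by a classical inverse-moment construction (e.g.\ $\psi(\lambda)=\frac{e}{\pi\lambda}\Im(e^{-(1-i)(\lambda-1)^{1/4}})$ works). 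Choosing a constant $c_0$ larger than $1+M$ (so that vertical translation by $c_0\Delta(x)$ lands safely inside $D$), I define
\begin{equation*}
(Eu)(x):=\begin{cases}u(x), & x\in D,\\[2pt]\displaystyle\int_1^\infty u\!\left(x',\,x_n+\lambda c_0\Delta(x)\right)\psi(\lambda)\,d\lambda, & x\in \mathbb{R}^n\setminus\overline{D}.\end{cases}
\end{equation*}

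The verification proceeds in two blocks. Continuity of $Eu$ across $\partial D$ follows because $\Delta\to 0$ at the boundary and $\int\psi=1$. For the $W^{m,p}$ estimate, differentiating under the integral sign produces, by a multivariate Fa\`a di Bruno expansion, terms of the form $(\partial^\beta u)(x',x_n+\lambda c_0\Delta(x))$ multiplied by polynomials in $\lambda$ and in derivatives of $\Delta$; the dangerous powers $\lambda^j$ with $j\geq 1$ are annihilated by the vanishing moments of $\psi$, while the remaining kernel defines, after the change of variables $y_n=x_n+\lambda c_0\Delta(x)$, a Schur-type integral operator whose $L^p$ boundedness for every $p\in[1,\infty]$ follows from the homogeneity bounds on $\Delta$. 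Summing the estimates over $|\alpha|\leq m$ and combining the local extensions through $\{\zeta_i\}$ then gives the uniform bound $\|Eu\|_{W^{m,p}(\mathbb{R}^n)}\leq K_{m,n}\|u\|_{W^{m,p}(\Omega)}$, with $K_{m,n}$ independent of $p$.

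The principal obstacle is the derivative analysis in the last paragraph: the $|\alpha|$-th derivative of $\Delta$ is only $O(\dist^{1-|\alpha|})$, which is not locally integrable for large $|\alpha|$, so the bound can only come from the cancellation provided by the moment conditions on $\psi$. Organizing the Fa\`a di Bruno expansion so that every surviving term factors into a kernel admitting a Schur estimate uniform in $p\in[1,\infty]$ — and separately handling $p=\infty$, where one replaces the Schur-type $L^p$ bound by a pointwise dominated-convergence argument exploiting the rapid decay of $\psi$ — is the technical heart of the proof and the step I would write out in full detail.
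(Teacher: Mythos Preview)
The paper does not supply its own proof of this theorem; it merely states the result and cites Stein's book \cite[Chapter~VI, \S3]{Elias1970a} as the source of the construction. Your sketch is precisely an outline of Stein's original argument from that reference --- partition of unity reduction to a special Lipschitz domain, regularized distance $\Delta$, the oscillatory averaging kernel $\psi$ with vanishing higher moments, and the Fa\`a di Bruno/Schur analysis of derivatives --- so there is nothing to compare: you have reproduced the proof the paper defers to.
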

\begin{remark}
	For the case $E:W^{1,\infty}(\Omega)\rightarrow W^{1,\infty}(\Rbb^n)$, Evans \cite[\S $5.4$]{Evans2010} gives another extension theorem provided $\Omega$ is bounded and a $C^1$ domain.
\end{remark}

\subsection{Gagliardo–Nirenberg–Moser and Moser estimates on bounded domains}\label{s:calculus}

The famous Gagliardo–Nirenberg–Moser inequalities are widely used in energy estimates and can be found in many references. Since this article requires these inequalities on a bounded domain, we state them here (see \cite{Adams1977,Nirenberg2011,Brezis2018}) and derive some useful corollaries which are the bounded domain version of inequalities in \cite[\S$13.3$]{Taylor2010}.

In this section, we will always assume $\Omega$ is a \textit{standard domain} in $\Rbb^n$, which means  $\Omega$ is $\Rbb^n$, a half-space or a \textit{Lipschitz bounded domain}  in $\Rbb^n$ (in fact, to serve this paper, we emphasize estimates on the Lipschitz bounded domain). A key condition in the following theorem is given by
\begin{equation}\label{e:cond0}
	s_2\geq 1\text{ is an integer}, \quad p_2=1 \AND s_1-\frac{1}{p_1}\geq s_2-\frac{1}{p_2}.
\end{equation}

\begin{theorem}[The general Gagliardo–Nirenberg–Moser inequalities, see \cite{Brezis2018}]\label{t:GNMineq}
	Assume $\Omega$ is a standard domain in $\Rbb^n$, the real numbers $s,s_1,s_2 \geq 0$, $\theta\in(0,1)$ and $1\leq p_1,p_2,p\leq \infty$ satisfy the relations
	\begin{equation*}
		s=\theta s_1+(1-\theta)s_2,  \AND \frac{1}{p}=\frac{\theta}{p_1}+\frac{1-\theta}{p_2}.
	\end{equation*}
	\begin{enumerate}
		\item If Condition \eqref{e:cond0} fails, then for every $\theta\in(0,1)$, there exists a constant $C$ depending on $s_1,s_2,p_1,p_2, \theta$ and $\Omega$ such that
		\begin{equation*}\label{e:GNMineq}
			\|f\|_{W^{s,p}(\Omega)}\leq C \|f\|^\theta_{W^{s_1,p_1}(\Omega)}\|f\|^{1-\theta}_{W^{s_2,p_2}(\Omega)}
		\end{equation*}
		for all $f\in W^{s_1,p_1}(\Omega)\cap W^{s_2,p_2}(\Omega)$.
		\item If Condition \eqref{e:cond0} holds, there exists some $f\in W^{s_1,p_1}(\Omega)\cap W^{s_2,p_2}(\Omega)$ such that $f\notin W^{s,p}(\Omega)$ for any $\theta\in(0,1)$.
	\end{enumerate}
\end{theorem}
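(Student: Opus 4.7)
The statement is the Brezis--Mironescu interpolation inequality (with its sharp obstruction), so the natural strategy is to \emph{reduce} the problem to the full-space case $\Omega = \Rbb^n$ and then invoke Fourier/Littlewood--Paley techniques. The plan is as follows. First, since a Lipschitz bounded domain admits a Stein extension (Theorem \ref{t:extentionS}) that is simultaneously continuous $W^{m,q}(\Omega) \to W^{m,q}(\Rbb^n)$ for \emph{all} $m \in \Zbb_{\geq 0}$ and all $q \in [1,\infty]$, it suffices to prove both $(1)$ and $(2)$ on $\Rbb^n$: for $(1)$, applying the extension to $f$ with the triple $(s_1,p_1), (s_2,p_2)$ (taking ceilings if $s_1,s_2$ are non-integer and using fractional-order extension arguments), proving the inequality on $\Rbb^n$, and then restricting back to $\Omega$; for $(2)$, given a counterexample $g$ on $\Rbb^n$, localizing it into a small ball compactly contained in $\Omega$ produces one on $\Omega$.

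\textbf{Part (1): the inequality on $\Rbb^n$ when the condition fails.} I would identify $W^{s,p}(\Rbb^n)$ with the Triebel--Lizorkin space $F^{s}_{p,2}(\Rbb^n)$ (valid for $1 < p < \infty$; the endpoints $p=1, \infty$ require replacing $W^{s,p}$ by the Besov space $B^{s}_{p,\infty}$ and handling them separately via standard embeddings $B^{s}_{p,1} \hookrightarrow W^{s,p} \hookrightarrow B^{s}_{p,\infty}$). The core interpolation inequality
\[
\|f\|_{B^{s}_{p,q}} \leq C \|f\|_{B^{s_1}_{p_1,q_1}}^{\theta} \|f\|_{B^{s_2}_{p_2,q_2}}^{1-\theta},
\]
valid whenever $s = \theta s_1 + (1-\theta) s_2$, $1/p = \theta/p_1 + (1-\theta)/p_2$, $1/q = \theta/q_1 + (1-\theta)/q_2$, follows from dyadic decomposition $f = \sum_{j} \Delta_j f$ together with H\"older's inequality applied blockwise:
\[
2^{js} \|\Delta_j f\|_{L^p} \leq \bigl(2^{js_1} \|\Delta_j f\|_{L^{p_1}}\bigr)^{\theta} \bigl(2^{js_2} \|\Delta_j f\|_{L^{p_2}}\bigr)^{1-\theta},
\]
and then a second H\"older in $\ell^q(\Zbb)$. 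Composing with the Besov--Sobolev embeddings then yields the stated inequality, provided the embedding step does not require the forbidden configuration. This is precisely where the condition \eqref{e:cond0} intervenes: when $p_2 = 1$ and $s_2$ is a positive integer, the embedding $B^{s_2}_{1,\infty} \hookrightarrow W^{s_2,1}$ fails (the image space would be a strict \emph{subspace}, not a superspace), so one cannot transfer the Besov interpolation inequality back to the Sobolev scale in that borderline case. When any of the sharp inequalities $s_1 - 1/p_1 < s_2 - 1/p_2$, or $p_2 > 1$, or $s_2 \notin \Zbb_{\geq 1}$ holds, one can perturb the Besov indices slightly and absorb the loss into a Jawerth--Franke--type embedding, yielding the desired estimate.

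\textbf{Part (2): sharpness via a scaling/lacunary counterexample.} When \eqref{e:cond0} holds, I would build $f \in W^{s_1,p_1} \cap W^{s_2,1}$ with $f \notin W^{s,p}$ by a lacunary series
\[
f(x) = \sum_{k=1}^\infty a_k \, \varphi(2^k x),
\]
where $\varphi \in C_c^\infty(\Rbb^n)$ is a fixed bump and the coefficients $a_k$ are tuned so that $\|f\|_{W^{s_j,p_j}} < \infty$ for $j=1,2$, by Fourier-support-separation one has
\[
\|f\|_{W^{s_j,p_j}}^{p_j} \asymp \sum_k |a_k|^{p_j} 2^{k(s_j p_j - n)},
\]
while the scaling at the critical index $s_2 - n/p_2 = s_2 - n$ (where $p_2 = 1$) forces $\|f\|_{W^{s,p}} = \infty$. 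The equality $s_1 - 1/p_1 \geq s_2 - 1/p_2$ (or rather the borderline case of equality) together with integrality of $s_2$ is exactly what obstructs a bounded interpolation estimate, since it prevents a spectral-gap improvement in the summation.

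\textbf{Main obstacle.} The real difficulty is Part (2): carefully selecting $\{a_k\}$ and the sequence of scales so that membership in \emph{both} endpoint Sobolev spaces is preserved but the target norm blows up; a naive lacunary series tends to fail one endpoint. The cleanest route is to build separate bumps at disjoint dyadic scales and then apply an Aubin--Lions or Frostman-type summation, which requires careful bookkeeping of the weights at the critical index $s_2 - n = s - n/p$.
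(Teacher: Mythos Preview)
The paper does not prove this theorem at all: its entire proof reads ``See the detailed proof of \cite[Theorem 1]{Brezis2018}.'' So there is nothing to compare your argument against beyond the cited reference itself.

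Your outline is a reasonable sketch of one route to the Brezis--Mironescu result, and it goes far beyond what the paper supplies. The reduction to $\Rbb^n$ via the Stein extension is sound for the positive direction, and the Littlewood--Paley/Besov interpolation with blockwise H\"older is a standard and correct engine for Part~(1). Two minor cautions: first, the Stein extension as stated in Theorem~\ref{t:extentionS} is for integer-order Sobolev spaces, so for non-integer $s_1,s_2$ you would need to invoke a fractional extension result (e.g.\ via real interpolation of the Stein operator, or the Rychkov universal extension); second, your explanation of \emph{why} condition~\eqref{e:cond0} is the obstruction is slightly off---the issue is not that $B^{s_2}_{1,\infty}\hookrightarrow W^{s_2,1}$ fails, but rather that $W^{s_2,1}$ for integer $s_2\geq 1$ sits strictly between $B^{s_2}_{1,1}$ and $B^{s_2}_{1,\infty}$ and is not itself an interpolation space on the Besov scale, so one cannot transfer the Besov-scale inequality back without loss precisely in that case. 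For Part~(2) the lacunary construction is the right idea, though you should also confirm that the localized counterexample still lies in $W^{s_1,p_1}(\Omega)\cap W^{s_2,p_2}(\Omega)$ after multiplication by a cutoff, which requires a product estimate at the endpoint $p_2=1$.
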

\begin{proof}
	See the detailed proof of \cite[Theorem 1]{Brezis2018}.
\end{proof}

By letting $s=\ell$, $p=2k/\ell$, $\theta=\ell/k$, $s_1=k$, $p_1=2$, $s_2=0$ and $p_2=\infty$, the following immediate result \eqref{e:GNMineq1} (we also refer to it as Gagliardo–Nirenberg–Moser inequality) derived from Theorem \ref{t:GNMineq} will play a crucial role in the later estimates,
\begin{align}\label{e:GNMineq1}
	\|f\|_{W^{\ell,\frac{2k}{\ell}}(\Omega)}\leq C \|f\|^{\frac{\ell}{k}}_{H^{k}(\Omega)}\|f\|^{1-\frac{\ell}{k}}_{L^\infty(\Omega)}
\end{align}
for all $0 < \ell<k$ and $f\in H^{k}(\Omega)\cap L^\infty(\Omega)$. The following estimates can be derived from the similar procedures to estimates in  \cite[\S$13.3$]{Taylor2010} with minor revisions.

\begin{proposition}\label{t:prodn}
	If $\Omega$ is a standard domain in $\Rbb^n$ and $|\beta|+|\gamma|=k$, then
	\begin{equation*}
		\|(D^\beta f)(D^\gamma g)\|_{L^2(\Omega)} \leq C\|f\|_{\Li(\Omega)}\|g\|_{H^k(\Omega)}+C\|f\|_{H^k(\Omega)}\|g\|_{\Li(\Omega)}
	\end{equation*}
	for all $f,g\in H^k(\Omega)\cap L^\infty(\Omega)$.
\end{proposition}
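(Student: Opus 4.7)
The plan is to reduce the estimate to a straightforward application of H\"older's inequality followed by the Gagliardo--Nirenberg--Moser inequality \eqref{e:GNMineq1} (established in Theorem \ref{t:GNMineq} under the hypothesis that $\Omega$ is a standard domain), together with Young's inequality to convert products of fractional powers into the desired convex sum.

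First I would dispose of the degenerate cases. If $|\beta|=0$ then $D^{\beta} f=f$ and $|\gamma|=k$, so H\"older gives $\|f\,D^{\gamma}g\|_{L^{2}(\Omega)}\leq \|f\|_{L^{\infty}(\Omega)}\|D^{\gamma}g\|_{L^{2}(\Omega)}\leq \|f\|_{L^{\infty}(\Omega)}\|g\|_{H^{k}(\Omega)}$, and the case $|\gamma|=0$ is symmetric. From here on I assume $1\leq |\beta|,|\gamma|\leq k-1$.

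Next, with the exponents $p=2k/|\beta|$ and $q=2k/|\gamma|$, one has $1/p+1/q=(|\beta|+|\gamma|)/(2k)=1/2$, so H\"older's inequality yields
\begin{equation*}
\|(D^{\beta}f)(D^{\gamma}g)\|_{L^{2}(\Omega)}\leq \|D^{\beta}f\|_{L^{2k/|\beta|}(\Omega)}\,\|D^{\gamma}g\|_{L^{2k/|\gamma|}(\Omega)}.
\end{equation*}
Since $1\leq |\beta|<k$ and $\Omega$ is standard, \eqref{e:GNMineq1} applied with $\ell=|\beta|$ gives
\begin{equation*}
\|D^{\beta}f\|_{L^{2k/|\beta|}(\Omega)}\leq \|f\|_{W^{|\beta|,2k/|\beta|}(\Omega)}\leq C\|f\|_{H^{k}(\Omega)}^{|\beta|/k}\|f\|_{L^{\infty}(\Omega)}^{1-|\beta|/k},
\end{equation*}
and similarly with $\ell=|\gamma|$ for $g$. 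Combining these three inequalities and using $|\beta|+|\gamma|=k$ to simplify the exponents on $\|f\|_{L^{\infty}}$ and $\|g\|_{L^{\infty}}$ yields
\begin{equation*}
\|(D^{\beta}f)(D^{\gamma}g)\|_{L^{2}(\Omega)}\leq C\bigl(\|f\|_{H^{k}(\Omega)}^{|\beta|/k}\|g\|_{L^{\infty}(\Omega)}^{|\beta|/k}\bigr)\bigl(\|f\|_{L^{\infty}(\Omega)}^{|\gamma|/k}\|g\|_{H^{k}(\Omega)}^{|\gamma|/k}\bigr).
\end{equation*}

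Finally, Young's inequality with conjugate exponents $k/|\beta|$ and $k/|\gamma|$ (valid because $|\beta|+|\gamma|=k$) converts the right-hand side into
\begin{equation*}
\tfrac{|\beta|}{k}\|f\|_{H^{k}(\Omega)}\|g\|_{L^{\infty}(\Omega)}+\tfrac{|\gamma|}{k}\|f\|_{L^{\infty}(\Omega)}\|g\|_{H^{k}(\Omega)},
\end{equation*}
which (after absorbing the coefficients into $C$) is precisely the claimed bound. There is no serious obstacle in this argument: the only point requiring any attention is verifying that for each intermediate order $1\leq \ell\leq k-1$ the hypotheses of Theorem \ref{t:GNMineq} are met on a Lipschitz bounded domain, which is exactly the regime where Condition \eqref{e:cond0} fails (since $p_{2}=\infty\neq 1$), so \eqref{e:GNMineq1} is available without further restriction.
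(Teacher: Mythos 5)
Your proof is correct and follows essentially the same route as the paper's: H\"older's inequality with exponents $2k/|\beta|$ and $2k/|\gamma|$, then the Gagliardo--Nirenberg--Moser interpolation \eqref{e:GNMineq1}, then Young's inequality. The one refinement you add --- explicitly disposing of the degenerate cases $|\beta|=0$ or $|\gamma|=0$ before invoking \eqref{e:GNMineq1}, which requires $0<\ell<k$ --- is a small but genuine tightening that the paper's proof glosses over.
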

\begin{proof}
	Let $|\beta|=\ell$, $|\gamma|=m$ and $\ell+m=k$, then using H\"older's inequality,  Gagliardo–Nirenberg–Moser inequality \eqref{e:GNMineq1} and Young’s inequality for conjugate Hölder exponents, in turn, yield
	\begin{align*}
		& \|(D^\beta f)(D^\gamma g)\|_{L^2(\Omega)} \leq \|D^\beta f\|_{L^{\frac{2k}{\ell}}(\Omega)}\|D^\gamma g\|_{L^\frac{2k}{m}(\Omega)}\leq C \|f\|^{\frac{\ell}{k}}_{H^{k}(\Omega)}\|f\|^{1-\frac{\ell}{k}}_{L^\infty(\Omega)}
		\|g\|^{\frac{m}{k}}_{H^{k}(\Omega)}\|g\|^{1-\frac{m}{k}}_{L^\infty(\Omega)} \notag  \\
		& \hspace{4cm}=C (\|f\|_{H^{k}(\Omega)}\|g\|_{L^\infty(\Omega)}) ^{\frac{\ell}{k}}(\|f\|_{L^\infty(\Omega)}
		\|g\|_{H^{k}(\Omega)})^{\frac{m}{k}}  \notag  \\
		& \hspace{4cm} \leq  C\|f\|_{\Li(\Omega)}\|g\|_{H^k(\Omega)}+C\|f\|_{H^k(\Omega)}\|g\|_{\Li(\Omega)}.
	\end{align*}
	This completes the proof.
\end{proof}

Next, using the above Gagliardo–Nirenberg–Moser inequality \eqref{e:GNMineq1} and Proposition \ref{t:prodn} to replace inequality $(3.17)$ and Proposition $3.6$ in \cite[\S$13.3$]{Taylor2010} (also see \cite{Klainerman1981,Koch1990,Majda2012}, etc.) respectively, almost the same calculations as Proposition $3.7$, $3.9$ and Lemma $3.10$ in \cite[\S$13.3$]{Taylor2010}, we obtain the following three assertions. Omitting the detailed proofs, we only list the results below due to the similarities.
\begin{corollary}\label{t:prodn2}
	If $\Omega$ is a standard domain in $\Rbb^n$ and $|\beta_1|+\cdots+|\beta_\mu|=k$, then
	\begin{equation*}
	\|f_1^{(\beta_1)}\cdots f_\mu^{(\beta_\mu)}\|_{L^2(\Omega)} \leq C\sum_{\nu}\bigl[\|f_1\|_{\Li(\Omega)}\cdots\widehat{\|f_\nu\|}_{\Li(\Omega)}\cdots \|f_\mu\|_{\Li(\Omega)}\bigr]\|f_\nu\|_{H^k(\Omega)}
	\end{equation*}
	for all $f_\ell\in H^k(\Omega)\cap L^\infty(\Omega)$ ($\ell=1,\cdots,\mu$).
\end{corollary}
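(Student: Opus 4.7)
The plan is to mimic the $\mu=2$ proof of Proposition \ref{t:prodn} with the obvious combinatorial generalization. First I would apply H\"older's inequality with exponents $p_\ell = 2k/|\beta_\ell|$ when $|\beta_\ell| \geq 1$, and $p_\ell = \infty$ when $|\beta_\ell|=0$; since $\sum_\ell |\beta_\ell| = k$, we have $\sum_\ell 1/p_\ell = 1/2$, so
\begin{equation*}
\|f_1^{(\beta_1)}\cdots f_\mu^{(\beta_\mu)}\|_{L^2(\Omega)} \leq \prod_{\ell : |\beta_\ell|\geq 1} \|D^{\beta_\ell} f_\ell\|_{L^{2k/|\beta_\ell|}(\Omega)} \prod_{\ell : |\beta_\ell|=0} \|f_\ell\|_{L^\infty(\Omega)}.
\end{equation*}

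Next, for each factor with $|\beta_\ell|\geq 1$, I would invoke the Gagliardo--Nirenberg--Moser inequality \eqref{e:GNMineq1} (with $\ell$ there equal to $|\beta_\ell|$ here) to obtain
\begin{equation*}
\|D^{\beta_\ell} f_\ell\|_{L^{2k/|\beta_\ell|}(\Omega)} \leq C \|f_\ell\|_{H^k(\Omega)}^{|\beta_\ell|/k}\|f_\ell\|_{L^\infty(\Omega)}^{1-|\beta_\ell|/k}.
\end{equation*}
For factors with $|\beta_\ell|=0$ the same inequality holds trivially with exponent $0$ on $\|f_\ell\|_{H^k(\Omega)}$. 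Multiplying over all $\ell$ and using $\sum_\ell |\beta_\ell| = k$, the combined bound becomes
\begin{equation*}
\|f_1^{(\beta_1)}\cdots f_\mu^{(\beta_\mu)}\|_{L^2(\Omega)} \leq C \prod_{\ell=1}^{\mu} \|f_\ell\|_{H^k(\Omega)}^{|\beta_\ell|/k}\|f_\ell\|_{L^\infty(\Omega)}^{1-|\beta_\ell|/k}.
\end{equation*}

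Finally, I would apply the weighted AM--GM (i.e.\ Young's inequality for $\mu$ conjugate exponents $k/|\beta_\ell|$, setting the conventional contribution from $|\beta_\ell|=0$ to $0$) to convert the geometric mean into the desired sum: regrouping, the right-hand side equals
\begin{equation*}
C \prod_{\ell=1}^{\mu}\|f_\ell\|_{L^\infty(\Omega)}\cdot \prod_{\ell=1}^{\mu}\Bigl(\frac{\|f_\ell\|_{H^k(\Omega)}}{\|f_\ell\|_{L^\infty(\Omega)}}\Bigr)^{|\beta_\ell|/k},
\end{equation*}
and since the exponents $|\beta_\ell|/k$ sum to $1$, weighted AM--GM bounds this by
\begin{equation*}
C\sum_{\nu=1}^{\mu} \frac{|\beta_\nu|}{k} \bigl[\|f_1\|_{L^\infty(\Omega)}\cdots \widehat{\|f_\nu\|}_{L^\infty(\Omega)}\cdots \|f_\mu\|_{L^\infty(\Omega)}\bigr]\|f_\nu\|_{H^k(\Omega)},
\end{equation*}
which, after absorbing the constants $|\beta_\nu|/k \leq 1$ into $C$, is the claimed inequality. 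I do not expect any real obstacle beyond bookkeeping; the only mildly delicate point is handling multi-indices $\beta_\ell$ of size zero cleanly so that the $L^\infty$ factors line up with the GNM exponents, which is why I separate those indices from the start.
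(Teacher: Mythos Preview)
Your proposal is correct and follows exactly the approach the paper intends: the paper omits the details and simply points to Proposition~3.7 in \cite[\S13.3]{Taylor2010}, whose proof is precisely H\"older plus the Gagliardo--Nirenberg--Moser interpolation \eqref{e:GNMineq1} plus weighted AM--GM, as you have written out. The only cosmetic point is that your regrouping implicitly assumes $\|f_\ell\|_{L^\infty}>0$, but this is harmless since the inequality is trivial when some $f_\ell\equiv 0$.
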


\begin{proposition}[Moser estimates I]\label{t:commuest}
	Suppose $\Omega$ is a standard domain in $\Rbb^n$, we have the estimates
	\begin{equation*}
		\|fg\|_{H^k(\Omega)} \leq C\|f\|_{\Li(\Omega)}\|g\|_{H^k(\Omega)}+C\|f\|_{H^k(\Omega)}\|g\|_{\Li(\Omega)}
	\end{equation*}
	and for $|\alpha|\leq k$,
	\begin{equation*}
		\|D^\alpha (fg)-fD^\alpha g\|_{L^2(\Omega)}\leq C\|\nabla f\|_{H^{k-1}(\Omega)}\|g\|_{\Li(\Omega)}+C\|\nabla f\|_{\Li(\Omega)}\|g\|_{H^{k-1}(\Omega)}.
	\end{equation*}
\end{proposition}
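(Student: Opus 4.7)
The plan is to mimic the classical Moser estimate proof (as in Taylor's \S 13.3, Proposition 3.7) but using the bounded-domain ingredients Proposition \ref{t:prodn} and the Gagliardo--Nirenberg--Moser inequality \eqref{e:GNMineq1} already established on standard (hence Lipschitz) domains. The key point is that the Leibniz expansion of $D^\alpha(fg)$ yields terms of the form $(D^\beta f)(D^{\alpha-\beta}g)$ with $|\beta|+|\alpha-\beta|=|\alpha|\le k$, and each such term is bounded in $L^2(\Omega)$ by Proposition \ref{t:prodn}.

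First I would prove the product estimate. Fix a multi-index $\alpha$ with $|\alpha|\le k$ and expand by Leibniz,
\begin{equation*}
D^\alpha (fg)=\sum_{\beta\le\alpha}\binom{\alpha}{\beta}(D^\beta f)(D^{\alpha-\beta}g).
\end{equation*}
For each term, $|\beta|+|\alpha-\beta|=|\alpha|\le k$, so Proposition \ref{t:prodn} gives
\begin{equation*}
\|(D^\beta f)(D^{\alpha-\beta}g)\|_{L^2(\Omega)}\le C\bigl(\|f\|_{\Li(\Omega)}\|g\|_{H^{|\alpha|}(\Omega)}+\|f\|_{H^{|\alpha|}(\Omega)}\|g\|_{\Li(\Omega)}\bigr).
\end{equation*}
Since $|\alpha|\le k$ implies $\|\cdot\|_{H^{|\alpha|}(\Omega)}\le \|\cdot\|_{H^{k}(\Omega)}$, summing over $|\alpha|\le k$ (and over $\beta\le\alpha$, a finite sum) yields the first inequality.

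Next I would prove the commutator estimate. The $\beta=0$ term in the Leibniz expansion is exactly $fD^\alpha g$, so
\begin{equation*}
D^\alpha(fg)-fD^\alpha g=\sum_{0<\beta\le\alpha}\binom{\alpha}{\beta}(D^\beta f)(D^{\alpha-\beta}g).
\end{equation*}
For each such $\beta$ with $|\beta|\ge1$, choose an index $i$ with $\beta_i\ge1$ and write $D^\beta f=D^{\beta-e_i}(\partial_i f)$, so that each term has the form $(D^{\beta'}(\nabla f))(D^{\alpha-\beta}g)$ with $|\beta'|=|\beta|-1$ and $|\beta'|+|\alpha-\beta|=|\alpha|-1\le k-1$. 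Applying Proposition \ref{t:prodn} to the pair $(\nabla f,g)$ with total order $|\alpha|-1\le k-1$ gives
\begin{equation*}
\|(D^{\beta'}(\nabla f))(D^{\alpha-\beta}g)\|_{L^2(\Omega)}\le C\bigl(\|\nabla f\|_{\Li(\Omega)}\|g\|_{H^{k-1}(\Omega)}+\|\nabla f\|_{H^{k-1}(\Omega)}\|g\|_{\Li(\Omega)}\bigr).
\end{equation*}
Summing the finite collection of terms over $0<\beta\le\alpha$ yields the second estimate.

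The only substantive issue is the applicability of Proposition \ref{t:prodn} on a bounded Lipschitz domain, but that is already granted (since $\Omega$ is a standard domain in the sense of the preceding paragraphs, and the Calder\'on/Stein extension machinery of \S\ref{s:extention}--\S\ref{s:extentionS} underwrites the Gagliardo--Nirenberg--Moser inequality \eqref{e:GNMineq1} used in its proof). Thus no genuine obstacle arises; the main care is simply bookkeeping of multi-indices to ensure the correct Sobolev order ($k$ for the product estimate, $k-1$ for the commutator) appears on the right-hand side, and ensuring one derivative is always peeled off from $f$ in the commutator case so that the estimate can be stated in terms of $\nabla f$ rather than $f$ itself.
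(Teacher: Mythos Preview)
Your proof is correct and follows essentially the same approach the paper indicates: the paper explicitly defers to Taylor \cite[\S 13.3, Proposition 3.7]{Taylor2010}, replacing the $\mathbb{R}^n$ ingredients there by the bounded-domain Gagliardo--Nirenberg--Moser inequality \eqref{e:GNMineq1} and Proposition \ref{t:prodn}, which is precisely the Leibniz-expansion-plus-Proposition-\ref{t:prodn} argument you carry out.
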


\begin{proposition}[Moser estimates II]\label{t:compoest}
	Suppose $\Omega$ is a standard domain in $\Rbb^n$, $F\in C^k(\Rbb)$ ($k\in \Zbb_{\geq1}$) and $F(0)=0$. Then, for $u\in H^k(\Omega)\cap \Li(\Omega)$ and  $u(x)\in \mathcal{G}$ where $\mathcal{G}$ is open and bounded set in $\Rbb$,
	\begin{equation*}
		\|F(u)\|_{H^k(\Omega)}\leq C_k(\|DF\|_{C^{k-1}(\bar{\mathcal{G}})}) (1+\|u\|^{k-1}_{\Li(\Omega)})\|u\|_{H^k(\Omega)}.
	\end{equation*}
\end{proposition}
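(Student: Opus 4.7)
The plan is to reduce the $H^k$ estimate of $F(u)$ to the sum over multi-indices $|\alpha|\leq k$ of $\|D^\alpha F(u)\|_{L^2(\Omega)}$, then expand each $D^\alpha F(u)$ by the Fa\`{a} di Bruno chain rule and bound the resulting terms using Corollary \ref{t:prodn2} together with the boundedness of $u$ on $\bar{\mathcal{G}}$.

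\textbf{Step 1 (the $|\alpha|=0$ base case).} Since $F(0)=0$ and $u(x)\in\bar{\mathcal{G}}$ is bounded a.e., the fundamental theorem of calculus gives $|F(u(x))|\leq \|F'\|_{L^\infty(I)}\,|u(x)|$ where $I$ is any bounded interval containing $0$ and the range of $u$. Taking $I$ to be the closed hull of $\bar{\mathcal{G}}\cup\{0\}$ (or enlarging the constant by a harmless factor) yields $\|F(u)\|_{L^2(\Omega)}\leq C\|DF\|_{C^{k-1}(\bar{\mathcal{G}})}\|u\|_{H^k(\Omega)}$, which supplies the ``1'' in the factor $1+\|u\|_{L^\infty}^{k-1}$.

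\textbf{Step 2 (higher derivatives via Fa\`{a} di Bruno).} For a multi-index $1\leq|\alpha|\leq k$, the chain rule for higher derivatives gives a finite expansion
\begin{equation*}
D^\alpha F(u)=\sum_{j=1}^{|\alpha|}F^{(j)}(u)\sum_{\substack{\beta_1+\cdots+\beta_j=\alpha\\ |\beta_i|\geq 1}} c_{\beta_1,\ldots,\beta_j}\,\prod_{i=1}^{j}D^{\beta_i}u,
\end{equation*}
with combinatorial constants $c_{\beta_1,\ldots,\beta_j}$ independent of $u$. Because $u(x)\in\bar{\mathcal{G}}$ is bounded and $F^{(j)}$ is continuous, we obtain the pointwise bound $\|F^{(j)}(u)\|_{L^\infty(\Omega)}\leq \|F^{(j)}\|_{L^\infty(\bar{\mathcal{G}})}\leq\|DF\|_{C^{k-1}(\bar{\mathcal{G}})}$ for every $1\leq j\leq k$.

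\textbf{Step 3 (product estimate for the $u$-factors).} For each fixed partition $(\beta_1,\ldots,\beta_j)$ with $\sum|\beta_i|=|\alpha|\leq k$, apply Corollary \ref{t:prodn2} with $f_1=\cdots=f_j=u$ to get
\begin{equation*}
\Bigl\|\prod_{i=1}^{j}D^{\beta_i}u\Bigr\|_{L^2(\Omega)}\leq C\,j\,\|u\|_{L^\infty(\Omega)}^{j-1}\|u\|_{H^{|\alpha|}(\Omega)}\leq C\,\|u\|_{L^\infty(\Omega)}^{j-1}\|u\|_{H^{k}(\Omega)}.
\end{equation*}
Pulling $F^{(j)}(u)$ out in $L^\infty$ and using Step 2 yields, for every term in the Fa\`{a} di Bruno expansion,
\begin{equation*}
\|F^{(j)}(u)\,D^{\beta_1}u\cdots D^{\beta_j}u\|_{L^2(\Omega)}\leq C\,\|DF\|_{C^{k-1}(\bar{\mathcal{G}})}\,\|u\|_{L^\infty(\Omega)}^{j-1}\|u\|_{H^{k}(\Omega)}.
\end{equation*}

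\textbf{Step 4 (collecting powers of $\|u\|_{L^\infty}$).} Since $0\leq j-1\leq k-1$, the elementary bound $\|u\|_{L^\infty}^{j-1}\leq 1+\|u\|_{L^\infty}^{k-1}$ holds uniformly in $j$. Summing the finitely many terms over $1\leq j\leq|\alpha|$ and over the (finitely many) partitions $(\beta_1,\ldots,\beta_j)$, and then summing over all $|\alpha|\leq k$, combines with Step 1 to give
\begin{equation*}
\|F(u)\|_{H^k(\Omega)}\leq C_k\bigl(\|DF\|_{C^{k-1}(\bar{\mathcal{G}})}\bigr)\bigl(1+\|u\|_{L^\infty(\Omega)}^{k-1}\bigr)\|u\|_{H^k(\Omega)},
\end{equation*}
as desired.

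The proof is essentially routine once Fa\`{a} di Bruno and Corollary \ref{t:prodn2} are in place; the only subtle point is Step 1, where the hypotheses $F(0)=0$ and $u\in L^\infty$ are needed to convert the compositional estimate into a linear estimate in $\|u\|_{H^k}$ without picking up an uncontrolled constant. The remaining work is combinatorial bookkeeping of the Fa\`{a} di Bruno expansion and verifying that Corollary \ref{t:prodn2} (which was stated for possibly distinct functions $f_1,\ldots,f_\mu$) applies to the special case $f_1=\cdots=f_j=u$, which is immediate.
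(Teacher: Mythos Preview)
Your proof is correct and follows exactly the standard route the paper defers to: the paper omits the details and points to Taylor \cite[\S 13.3]{Taylor2010}, whose argument is precisely the Fa\`{a} di Bruno expansion combined with the multilinear product estimate (here Corollary \ref{t:prodn2}) and the $F(0)=0$ trick for the zeroth-order term. The only adaptation needed---replacing the $\Rbb^n$ Gagliardo--Nirenberg--Moser inequality by its standard-domain version underlying Corollary \ref{t:prodn2}---is already built into the corollary you invoke, so your write-up is essentially the proof the paper would have given had it not been omitted.
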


\subsection{Ferrari--Shirota--Yanagisawa estimates}\label{s:FEY}
Ferrari--Shirota--Yanagisawa estimate firstly given by Ferrari \cite{Ferrari1993} and
Shirota, Yanagisawa \cite{Shirota1993} respectively in $1993$ is a bounded domain version of Beale--Kato--Majda estimate (in $\Rbb^3$, see \cite{Beale1984}). We present this inequality without proof, and readers may consult the details in \cite{Ferrari1993,Shirota1993}.
\begin{theorem}[Ferrari--Shirota--Yanagisawa estimates, \cite{Ferrari1993}]\label{t:FSY}
	Suppose $\Omega\subset \Rbb^3$ is a bounded, simply connected domain of class $C^{2,\beta}$, $\beta\in(0,1)$, and $v:=(\mathbf{u},\phi)\in H^3(\Omega)$ is a solution of the system
	\begin{align*}
		\nabla\times \mathbf{u}-\nabla \phi=&\omega \quad \text{in} \quad \Omega, \\
		\nabla\cdot \mathbf{u}=&\theta \quad \text{in} \quad \Omega, \\
		\mathbf{u}\cdot \mathbf{n}=& 0 \quad \text{on} \quad \del{}\Omega, \\
		\phi=& 0 \quad \text{on} \quad \del{}\Omega
	\end{align*}
	with $\psi:=(\omega,\theta)\in H^2(\Omega)$ and $\mathbf{n}$ is the unit outward normal at $\bx\in\del{}\Omega$, then
	\begin{equation*}
		\|v\|_{W^{1,\infty}(\Omega)} \leq C \Bigl(1+\ln^{+}\frac{\|\psi\|_{H^2(\Omega)}}{\|\psi\|_{\Li(\Omega)}}\Bigr)\|\psi\|_{\Li(\Omega)}
	\end{equation*}
	where
	\begin{align*}
		\ln^{+} a:=\begin{cases}
		\ln a, \quad \text{if} \quad a\geq 1\\
		0, \quad \text{otherwise}
		\end{cases}.
	\end{align*}
\end{theorem}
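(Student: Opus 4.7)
The plan is to reduce the diffuse boundary problem to Makino's Cauchy problem on all of $\mathbb{R}^3$ by means of the symmetrizing change of variable $\alpha = \frac{2\sqrt{K\gamma}}{\gamma-1}\rho^{(\gamma-1)/2}$, which turns the Euler--Poisson system with vacuum into a symmetric hyperbolic system with a nonlocal Newtonian source (see \eqref{e:eual1}--\eqref{e:eual3}). For part (1), I would first extend the interior datum $\mathring{w}^i_0$ from $\Omega(0)$ to $\mathbb{R}^3$: since $\Omega(0)$ is a precompact $C^1$ domain, Lemma \ref{t:dm1} guarantees the Calderón uniform cone condition, so Theorem \ref{t:Cdnext} produces an extension $w^i_0 \in H^s(\mathbb{R}^3)$ with preserved Sobolev norm. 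Then Makino's classical local existence theorem (cited here as Theorem \ref{t:Newext}) yields a tame solution $(\rho, w^i, \Phi)$ on $[0,T)\times\mathbb{R}^3$ with $\rho^{(\gamma-1)/2} \in C^{s-2}$ and $w^i \in C^0([0,T),H^s)\cap C^1([0,T),H^{s-1})$. Finally I would set $\Omega(t):=\supp\rho(t,\cdot)$ and $\mathring{w}^i := w^i|_{\Omega(t)}$; Proposition \ref{t:lmregeq0} (diffuse boundary is equivalent to $\rho^{\gamma-1}\in C^1$) together with $\rho^{(\gamma-1)/2}\in C^1$ shows this restriction is a Makino solution, hence by Proposition \ref{t:maktoreg} a classical solution in the sense of Definition \ref{t:clsl}.

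\textbf{Uniqueness.} For uniqueness I need to compare two classical solutions with the same interior data but whose underlying velocity fields may differ outside $\Omega(t)$. The plan is to invoke Theorem \ref{t:C1bdry} (preservation of $C^1$ boundary under $C^1$ flow) and then Proposition \ref{t:bdgl} to conclude that the space-time lateral boundary $[0,T_\star]\times\partial\Omega(t)$ glues with the top and bottom slices into a Lipschitz hypersurface in $\mathbb{R}^4$. This allows an application of Stein's extension theorem (Theorem \ref{t:extentionS}) to extend each velocity field to $w^i \in W^{1,\infty}(\mathbb{R}^4)$, so that every classical solution becomes extendable in the sense of Definition \ref{t:exsl}. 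Then I would apply the relative entropy estimate of Lemma \ref{t:unqr3}, which for two such solutions controls $\int_{\mathbb{R}^3}\eta^\star(t,\bx)\,d\bx$ by its initial value plus a Grönwall-type term. Because the integrand $\eta^\star$ is supported in $\Omega_1(t)\cup\Omega_2(t)$ and vanishes identically at $t=0$ under the hypothesis that the data agree on $\Omega(0)$, Grönwall yields $\eta^\star\equiv 0$, forcing $\Omega_1(t)=\Omega_2(t)$ and equality of $(\rho,\mathring{w}^i)$ inside.

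\textbf{Strong continuation principle.} The plan here is bootstrap the weak continuation principle by replacing the crude control of $\|\nabla\mathring{w}\|_{L^\infty(\Omega(t))}$ by one involving only $\Theta$, $\Omega_{jk}$ on a subregion, plus $W_{jk}$ on the thin collar $\Omega_\epsilon(t)$. The key energy estimate \eqref{e:aprest} already localizes everything to $\Omega(t)$; the obstacle is that the Ferrari--Shirota--Yanagisawa inequality (Theorem \ref{t:FSY}, formulated above as Lemma \ref{t:FSY2}) requires a $C^{2,\beta}$ domain and the tangency $w^i n_i=0$ on the boundary, neither of which holds. I would overcome this by the $C^\infty$-approximation Lemma \ref{t:Cinfapx}: choose a smooth domain $\mathcal{D}(t)$ with $\mathring{\Omega}_\epsilon(t)\subset \mathcal{D}(t)\subset \Omega(t)$, extend the outward unit normal of $\partial\mathcal{D}(t)$ via Proposition \ref{t:normal1}--\ref{t:normal3} to a vector field $\mathbf{n}$ supported near $\partial\mathcal{D}(t)$, pick a cutoff $\phi$, and split
\begin{equation*}
w^i = (w^j n_j)n^i\phi \;+\; \bigl(w^i - (w^j n_j)n^i\phi\bigr).
\end{equation*}
The first piece is supported in the collar $\Omega_\epsilon(t)$ and is bounded in $W^{1,\infty}$ by $\|W_{jk}\|_{L^\infty(\Omega_\epsilon(t))}$ together with $\|w^i\|_{L^\infty}$ (which is controlled via Lagrangian integration of \eqref{e:eual2} along integral curves, \`a la \eqref{e:west1}). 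The second piece is tangential on $\partial\mathcal{D}(t)$, so Lemma \ref{t:FSY2} yields the logarithmic bound
\begin{equation*}
\|w^i - (w^j n_j)n^i\phi\|_{W^{1,\infty}(\mathcal{D}(t))} \leq C\bigl[(1+\ln^{+}\|\mathcal{U}\|_{H^s(\Omega(t))})(\|\Theta\|_{L^\infty(\mathring{\Omega}_\epsilon(t))}+\|\Omega_{jk}\|_{L^\infty(\mathring{\Omega}_\epsilon(t))}) + 1\bigr].
\end{equation*}
Plugging this back into \eqref{e:aprest} gives an integral inequality of the form $y(t)\leq G(t)+\int_0^t C_3 a(s)y(s)\,ds$ with $y=\ln(C\|\mathcal{U}\|_{H^s}+e)$; Grönwall then produces the improved a priori bound \eqref{e:Uest}, and a final iteration of the local existence theorem extends the solution beyond $T$. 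The delicate step that I expect to demand most care is precisely the geometric decomposition, since one must simultaneously keep the tangential piece in $H^s$ of a $C^\infty$ subdomain and keep the normal piece confined to the $\epsilon$-collar where $\|W_{jk}\|_{L^\infty}$ is finite by hypothesis.
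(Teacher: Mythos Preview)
Your proposal does not address the stated theorem at all. Theorem~\ref{t:FSY} is the Ferrari--Shirota--Yanagisawa estimate itself, an auxiliary analytic inequality which the paper cites from \cite{Ferrari1993,Shirota1993} without proof; what you have written is instead a proof outline for the Main Theorem~\ref{t:mainthm} (local existence, uniqueness, and the strong continuation principle), in which Theorem~\ref{t:FSY} appears only as a \emph{tool} (via Lemma~\ref{t:FSY2}).

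To prove Theorem~\ref{t:FSY} one would need the actual elliptic and potential-theoretic argument from Ferrari's paper: represent $v$ in terms of $\psi=(\omega,\theta)$ through the div-curl system with the given boundary conditions, and establish the logarithmic $W^{1,\infty}$ bound via singular integral estimates on the bounded domain, analogous to the Beale--Kato--Majda estimate on $\mathbb{R}^3$. Nothing in your proposal touches this; you have confused the statement to be proved with the main result that consumes it. As a proof of Theorem~\ref{t:mainthm} your outline is essentially correct and matches the paper's approach, but as a proof of Theorem~\ref{t:FSY} it is vacuous.
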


\subsection{The generalized Gronwall inequality of integral form}\label{s:grnwl}
The following form of Gronwall inequality comes from \cite[Theorem $1.3.1$]{Pachpatte1997}.
\begin{theorem}[The generalized Gronwall inequality of integral form]\label{e:grnwl}
	Let $u$ and $f$ be continuous and nonnegative functions
	defined  on  $\mathcal{J}  =  [\alpha,\beta]$,  and  let  $n(t)$  be a continuous, positive  and  non-decreasing function defined on $\mathcal{J}$; then
	\begin{equation*}
		u(t)\leq n(t)+\int^t_\alpha f(s)u(s) ds,
	\end{equation*}
	for $t\in \mathcal{J}$, implies that
	\begin{equation*}
	u(t)\leq n(t)\exp\Bigl(\int^t_\alpha f(s) \Bigr) ds,
	\end{equation*}
	for $t\in \mathcal{J}$.
\end{theorem}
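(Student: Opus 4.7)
The plan is to reduce the stated inequality to the classical (constant majorant) Gronwall inequality by a normalization using the function $n$. Since $n$ is continuous and strictly positive on the compact interval $\mathcal{J}$, division by $n(t)$ is legitimate, and since $n$ is non-decreasing, one has $n(s) \leq n(t)$ for $s \in [\alpha, t]$, hence $1/n(t) \leq 1/n(s)$. Dividing the hypothesis by $n(t)$ and using this monotonicity inside the integral yields
\begin{equation*}
\frac{u(t)}{n(t)} \;\leq\; 1 + \int_\alpha^t \frac{f(s)\,u(s)}{n(t)}\, ds \;\leq\; 1 + \int_\alpha^t f(s)\,\frac{u(s)}{n(s)}\, ds.
\end{equation*}

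Next I would set $v(t) := u(t)/n(t)$, a continuous nonnegative function on $\mathcal{J}$, so that the above becomes the classical inequality $v(t) \leq 1 + \int_\alpha^t f(s)\, v(s)\, ds$. The conclusion $v(t) \leq \exp\bigl(\int_\alpha^t f(s)\, ds\bigr)$ then follows from the standard Gronwall lemma, and multiplying by $n(t)$ delivers the claimed bound. For self-containedness, the classical step can be verified directly: define $G(t) := 1 + \int_\alpha^t f(s) v(s)\, ds$, which is absolutely continuous with $G(\alpha)=1$ and $G \geq v$; then $G'(t) = f(t)\,v(t) \leq f(t)\,G(t)$, so $(\ln G)'(t) \leq f(t)$ a.e., and integration on $[\alpha,t]$ gives $G(t) \leq \exp\bigl(\int_\alpha^t f(s)\,ds\bigr)$, whence $v(t)\le G(t)$ closes the estimate.

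The argument is essentially routine and the only delicate point is the monotonicity step permitting the replacement of $1/n(t)$ by $1/n(s)$ inside the integral; this is exactly where the hypothesis that $n$ is non-decreasing (as opposed to merely positive) enters. Continuity of $n$ together with compactness of $\mathcal{J}$ ensures that $n$ is bounded away from zero, so the division is uniformly well-defined and all integrals remain finite throughout the argument.
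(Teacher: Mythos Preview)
Your argument is correct and is in fact the standard proof of this version of Gronwall's inequality (it is essentially the proof given in Pachpatte's book). The paper itself does not supply a proof; it merely cites \cite[Theorem~1.3.1]{Pachpatte1997} and states the result, so there is nothing further to compare.
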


\subsection{Reynold's transport theorem}\label{s:RTT}
Reynold’s transport theorem is a multidimensional version of the Leibniz integral rule, which states how to exchange the derivative and the integral if the integration region changes with time. We slightly improve Reynold's transport theorem using Theorem $9.41$ in Rudin \cite[Page $235$-$236$]{Rudin1976}.
\begin{theorem}[Reynold's transport theorem]\label{t:RTT}
	Suppose $\del{}\Omega(0) \in C^1$  and a field $C^1\ni f:[0,T)\times \overline{\Omega(t)} \rightarrow V$ and the flow $\chi:[0,T)\times \overline{\Omega(0)}\rightarrow \overline{\Omega(t)}\subset \Rbb^3$ generated by a vector field $v^i\in C^1([0,T)\times\overline{\Omega(t)},\Rbb^3)$, such that  $\chi(t,\xi)=\bx \in \overline{\Omega(t)}$ for every  $(t,\xi)\in[0,T)\times \overline{\Omega(0)}$ where $T>0$ is a constant, $\Omega(t):=\chi(t,\Omega(0)) \subset \Rbb^3$ is a domain depending on $t$ and $V \subset \Rbb^n$ for some $n \in \Zbb_{\geq 1}$. Then
	\begin{align*}
	\frac{d}{dt}\int_{\Omega(t)}f(t,\bx)d^3\bx = & \int_{\Omega(t)}\Bigl[\del{t} f(t,\bx)  + \del{i} \bigl(f(t,\bx)v^i(t,\bx)\bigr) \Bigr]d^3\bx \notag  \\
	=&  \int_{\Omega(t)}\del{t} f(t,\bx) d^3\bx  + \int_{\del{}\Omega(t)}   f(t,\bx)v^i(t,\bx) \nu_i d\sigma.
	\end{align*}
where $\nu_i:=\delta_{ij}\nu^j$ and $\nu^j$ is the unit outward-pointing normal to $\del{}\Omega(t)$
\end{theorem}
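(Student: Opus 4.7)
The plan is the classical Lagrangian change-of-variables argument: pull the integral back to the fixed reference domain $\Omega(0)$, differentiate under the integral sign, apply the Jacobian evolution identity, and push the result forward again. More precisely, using the $C^1$ flow $\chi$ generated by $\mathbf{w}$ (whose $C^1$ regularity follows from $w^i\in C^1([0,T)\times\overline{\Omega(t)},\Rbb^3)$ together with the fundamental theorems on flows cited in Appendix \ref{s:surf}), I set $\bx=\chi(t,\xi)$ and $J(t,\xi):=\det\bigl(\nabla_\xi \chi(t,\xi)\bigr)$. Since $\chi^t:\overline{\Omega(0)}\to\overline{\Omega(t)}$ is a $C^1$ diffeomorphism (Lemma \ref{t:surfc2}), the change of variables gives
\begin{equation*}
\int_{\Omega(t)}f(t,\bx)\,d^3\bx=\int_{\Omega(0)}f(t,\chi(t,\xi))\,J(t,\xi)\,d^3\xi,
\end{equation*}
and the right-hand integral is now over a $t$-independent domain, allowing one to differentiate under the integral (the integrand is jointly $C^1$ in $(t,\xi)$ on the compact set $[0,T')\times\overline{\Omega(0)}$ for any $T'<T$).

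The next step is to compute $\partial_t[f(t,\chi(t,\xi))\,J(t,\xi)]$. The chain rule gives $\partial_t f(t,\chi(t,\xi))=(\partial_t f)(t,\bx)+w^i(t,\bx)\,\partial_i f(t,\bx)$ with $\bx=\chi(t,\xi)$. The key geometric identity I will invoke is the \emph{Euler expansion formula}
\begin{equation*}
\partial_t J(t,\xi)=J(t,\xi)\,\bigl(\partial_i w^i\bigr)(t,\chi(t,\xi)),
\end{equation*}
which follows from Jacobi's formula $\partial_t\det A=\det(A)\,\tr(A^{-1}\partial_t A)$ applied to $A=\nabla_\xi\chi$, combined with the relation $\partial_t\bigl(\partial\chi^k/\partial\xi^a\bigr)=\bigl(\partial_i w^k\bigr)(t,\chi)\,\partial\chi^i/\partial\xi^a$ obtained by differentiating the defining ODE of the flow in $\xi$.

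Assembling these pieces in the pulled-back integral yields
\begin{equation*}
\frac{d}{dt}\int_{\Omega(t)}f(t,\bx)\,d^3\bx=\int_{\Omega(0)}\Bigl[\bigl(\partial_t f+w^i\partial_i f+f\,\partial_i w^i\bigr)(t,\chi(t,\xi))\Bigr]J(t,\xi)\,d^3\xi,
\end{equation*}
and pushing forward via $\xi\mapsto\bx=\chi(t,\xi)$ converts this back to an integral over $\Omega(t)$ whose integrand is precisely $\partial_t f+\partial_i(f\,w^i)$ after recognizing $w^i\partial_i f+f\,\partial_i w^i=\partial_i(f\,w^i)$. That gives the stated identity.

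The routine pieces (change of variables, chain rule, product rule) are standard, so the single substantive ingredient is the Euler expansion formula for $J$; this is the step where one must actually use that $\chi$ is the flow of $\mathbf{w}$, and it is the only place the interchange of $t$-derivatives with $\xi$-derivatives of $\chi$ enters. I do not anticipate any analytic obstruction once $C^1$ regularity of $\chi$ and of $J$ is in hand, so the theorem reduces to a clean application of these classical identities.
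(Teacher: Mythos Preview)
Your proof is correct and follows the classical Lagrangian change-of-variables argument; this is precisely the standard approach found in the references the paper cites (e.g., \cite{Marsden2003}, \cite{Chorin1993}). Note that the paper does not supply its own proof of this theorem but defers to those references, so there is nothing further to compare.
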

\begin{proof}
	Let us briefly state this proof.
	Since $\partial \Omega(0)\in C^1$ and $v^i\in C^1$, by Lemma \ref{t:surfc2}, we obtain $\partial \Omega(t)\in C^1$ and $\chi\in C^1$. Now let us calculate $\del{t} \int_{\Omega(t)} f(t,\bx) d^3\bx$. First let us fix the integration domain by using $\int_{\Omega(t)} f(t,\bx) d^3\bx=\int_{\Omega(0)} f(t,\chi(t,\xi)) |J(t,\xi)|d^3\xi$ where $J(t,\xi) =\det \bigl[\frac{\partial \chi_t^k}{\partial \xi^l}\bigr] \neq 0$ is the Jacobian (we assume $J>0$ without any loss of generality).
By denoting $\chi_t(\xi):=\chi(t,\xi)$, we obtain
	\begin{equation}\label{e:DtJ}
	\del{t}J(t,\xi)=  \det\Bigl[\del{t}\frac{\partial \chi_t^1}{\partial \xi^l},\frac{\partial \chi_t^2}{\partial \xi^l},\frac{\partial \chi_t^3}{\partial \xi^l}\Bigr]+\det\Bigl[\frac{\partial \chi_t^1}{\partial \xi^l},\del{t}\frac{\partial \chi_t^2}{\partial \xi^l},\frac{\partial \chi_t^3}{\partial \xi^l}\Bigr]+\det\Bigl[\frac{\partial \chi_t^1}{\partial \xi^l},\frac{\partial \chi_t^2}{\partial \xi^l},\del{t}\frac{\partial \chi_t^3}{\partial \xi^l}\Bigr]  .
\end{equation}
Let us consider $	\del{t}\chi^k(t,\xi)=v^k$.
Since $v^k\in C^1$ and $\chi\in C^1$, we have $\frac{\partial}{\partial \xi^l}[\del{t}\chi^k(t,\xi)]=\frac{\partial \chi_t^j}{\partial \xi^l}\frac{\partial}{\partial x^j}v^k\in C^0$, then, by \textit{Theorem $9.41$ in Rudin \cite[Page $235$-$236$]{Rudin1976}}, $\del{t}(\frac{\partial}{\partial \xi^l}\chi^k(t,\xi))$ exists and $\del{t}[\frac{\partial}{\partial \xi^l}\chi^k(t,\xi)]=\frac{\partial}{\partial \xi^l}[\del{t}\chi^k(t,\xi)]=\frac{\partial \chi_t^j}{\partial \xi^l}\frac{\partial v^k}{\partial x^j}$. Then substituting the above equality into the expression \eqref{e:DtJ} of $\del{t}J$, we arrive at $\del{t}J  =(\del{i} v^i )J$ (see \cite[\S$1$]{Chorin1993} and \cite[Page $197$-$198$]{Hieber2020} ).
Then differentiating $\del{t} \int_{\Omega(t)} f(t,\bx) d^3\bx$, using $\del{t}J  =(\del{i} v^i )J$,  similarly to  \cite[Page $578$]{Marsden2003}, \cite[\S $4$]{Flanders1973} and \cite[\S $13$]{Huilgol1997}, we obtain the formula in this theorem (noting the second equality is from the divergence theorem, see \cite[Theorem A$8.8$]{Alt2016}).  We complete the proof.
\end{proof}

\section*{Acknowledgement}
This work is partially supported by the China Postdoctoral Science Foundation Grant under the grant No. $2018M641054$ and the Fundamental Research Funds for the Central Universities, HUST: $5003011036$ and the National Natural Science Foundation of China (NSFC) under
the Grant No. $11971503$.  
%We also thank the anonymous referees very much for their very detailed comments, suggestions and criticisms, which have served to significantly improve the contents, readability and exposition of this article.

\bigskip

\textbf{Data Availability} Data sharing is not applicable to this article as no datasets were
generated or analysed during the current study.

\bigskip

\textbf{Declarations}

\bigskip

\textbf{Conflict of interest} The authors declare that they have no conflict of interest.

\bibliographystyle{amsplain}
\bibliography{Reference_Chao}

\end{document}